\documentclass[11pt]{article}

\usepackage{titling}
\usepackage{amssymb,enumerate}
\usepackage{amsmath,amsfonts}
\usepackage{amsthm}
\usepackage{latexsym}
\usepackage{mathrsfs}
\usepackage{color}
\usepackage{microtype}
\usepackage{geometry}

\geometry{hmargin=0.85in,vmargin=0.85in}
\usepackage{algorithm}
\usepackage{algorithmicx}
\usepackage{algpseudocode}
\usepackage{xcolor}
\usepackage{booktabs}
\usepackage{multirow}
\usepackage{pifont}



\def\1{\bm{1}}










\DeclareMathAlphabet{\mathsfit}{\encodingdefault}{\sfdefault}{m}{sl}
\SetMathAlphabet{\mathsfit}{bold}{\encodingdefault}{\sfdefault}{bx}{n}












\newcommand{\E}{\mathbb{E}}

\newcommand{\R}{\mathbb{R}}





\theoremstyle{plain}
\newtheorem{theorem}{Theorem}

\newtheorem{lemma}{Lemma}

\newtheorem{assumption}{Assumption}

\newtheorem{remark}{Remark}

\usepackage{graphicx, color}
\usepackage{algorithm, algpseudocode} 
\usepackage{mathrsfs} 
\allowdisplaybreaks[4]

\usepackage{lipsum}

\def\Dp{{\mathcal D}^p}
\def\Rp{{\mathcal R}_p}

\title{Complexity of normalized stochastic first-order methods with momentum under heavy-tailed noise}

\author{Chuan He\thanks{Department of Mathematics, Link\"oping University, Sweden (email: {\tt chuan.he@liu.se}). The work of Chuan He was partially supported by the Wallenberg AI, Autonomous Systems and Software Program (WASP) funded by the Knut and Alice Wallenberg Foundation.}
\and
Zhaosong Lu\thanks{Department of Industrial and Systems Engineering, University of Minnesota, USA (email: {\tt zhaosong@umn.edu}).}
\and
Defeng Sun\thanks{Department of Applied Mathematics, The Hong Kong Polytechnic University, Hong Kong, People’s Republic of China (email: {\tt defeng.sun@polyu.edu.hk}). The research of Defeng Sun   was partially supported by the Research Center
for Intelligent Operations Research at The Hong Kong Polytechnic University (P0051214).}
\and
Zhanwang Deng\thanks{Academy for Advanced Interdisciplinary Studies, Peking University, Beijing, People’s Republic of China (email: {\tt dzw\_opt2022@stu.pku.edu.cn}).}
}

\date{
June 12, 2025 (Revised: February 11, 2026)
}

\begin{document}
\maketitle

\begin{abstract} 
In this paper, we propose practical normalized stochastic first-order methods with Polyak momentum, multi-extrapolated momentum, and recursive momentum for solving unconstrained optimization problems. These methods employ dynamically updated algorithmic parameters and do not require explicit knowledge of problem-dependent quantities such as the Lipschitz constant or noise bound. We establish first-order oracle complexity results for finding approximate stochastic stationary points under heavy-tailed noise and weakly average smoothness conditions---both of which are weaker than the commonly used bounded variance and mean-squared smoothness assumptions. Our complexity bounds either improve upon or match the best-known results in the literature. Numerical experiments are presented to demonstrate the practical effectiveness of the proposed methods.
\end{abstract}

\noindent{\small \textbf{Keywords:} Stochastic first-order methods, momentum, heavy-tailed noise, first-order oracle complexity}

\medskip

\noindent{\small {\bf Mathematics Subject Classification:} 49M05, 49M37, 90C25, 90C30}

\section{Introduction}

In this paper, we consider the smooth unconstrained optimization problem:
\begin{align}\label{ucpb}
\min_{x\in\R^n}f(x),
\end{align}
where $f:\mathbb{R}^n\to\mathbb{R}$ is continuously differentiable. We assume that problem \eqref{ucpb} has at least one optimal solution. In many emerging applications---particularly in machine learning and related fields---instances of \eqref{ucpb} are often large- or even huge-scale, which poses significant challenges to classical first-order methods due to the high cost of computing the exact gradient of $f$. To address this issue, stochastic first-order methods (SFOMs) have been extensively studied, as they employ stochastic estimators of the gradient that are typically much cheaper to compute. The goal of this paper is to propose practical \textit{normalized} stochastic first-order methods with momentum for solving problem \eqref{ucpb}, and to analyze their complexity under \textit{heavy-tailed noise}.

Recently, a variety of SFOMs (e.g., \cite{bottou2018optimization,cutkosky2020momentum,cutkosky2019momentum,fang2018spider,gao2024non,ghadimi2013stochastic,ghadimi2016accelerated,khaled2022better,lei2017non,nguyen2017sarah}) have been developed for solving problem \eqref{ucpb} where the stochastic gradient $G(\cdot;\xi)$ is an unbiased estimator of $\nabla f(\cdot)$ with bounded variance, i.e., $G(\cdot;\xi)$ satisfies the conditions:
\begin{align}\label{asp:ub-bd}
\E[G(x;\xi)]=\nabla f(x),\quad \E[\|G(x;\xi)-\nabla f(x)\|^2]\le\sigma^2\qquad\forall x\in\mathbb{R}^n
\end{align}
for some $\sigma>0$. The complexity of these SFOMs has been extensively studied under various smoothness conditions. In particular, under the assumption that 
$\nabla f$ is Lipschitz continuous, the methods proposed in
\cite{cutkosky2020momentum,ghadimi2013stochastic,ghadimi2016accelerated} achieve a first-order oracle complexity of $\mathcal{O}(\epsilon^{-4})$ for finding an \textit{$\epsilon$-stochastic stationary point} (SSP) of \eqref{ucpb}, i.e., a point $x$ satisfying
\begin{align*}
\E[\|\nabla f(x)\|] \le \epsilon.   
\end{align*}
In addition, assuming that $\nabla^2 f$ is Lipschitz continuous, the method in \cite{cutkosky2020momentum} achieves a first-order oracle complexity of $\mathcal{O}(\epsilon^{-7/2})$ for finding an $\epsilon$-SSP of \eqref{ucpb}. Furthermore, several variance-reduced methods \cite{cutkosky2019momentum,fang2018spider,li2021page} have been shown to achieve a first-order oracle complexity of $\mathcal{O}(\epsilon^{-3})$ for finding an $\epsilon$-SSP of \eqref{ucpb}, under the assumption that the stochastic gradient estimator $G(\cdot;\xi)$ satisfies a mean-squared smoothness condition.

Despite extensive studies on SFOMs under the bounded variance assumption \eqref{asp:ub-bd}, recent empirical findings in machine learning (e.g., \cite{gurbuzbalaban2021heavy,simsekli2019heavy,simsekli2019tail,zhang2020adaptive}) suggest that in practice the stochastic gradient estimator $G(\cdot;\xi)$ is typically unbiased but exhibits a bounded $\alpha$th central moment for some $\alpha \in (1, 2]$, rather than bounded variance. Specifically, $G(\cdot;\xi)$ satisfies 
\begin{align*}
\E[G(x;\xi)]=\nabla f(x),\quad\E[\|G(x;\xi)-\nabla f(x)\|^\alpha]\le\sigma^\alpha\qquad\forall x\in\mathbb{R}^n    
\end{align*}
for some $\sigma>0$, which is commonly referred to as the \textit{heavy-tailed noise regime}. The convergence behavior and analysis of SFOMs under this condition differ significantly from those based on the bounded variance assumption \eqref{asp:ub-bd}, as vanilla stochastic gradient methods can diverge when 
$\alpha \in (1, 2)$; see \cite[Remark 1]{zhang2020adaptive}. To address this divergence, gradient clipping has been widely adopted in algorithm design (e.g., \cite{cutkosky2021high,liu2024high,liu2023breaking,NEURIPS2023_4c454d34,sadiev2023high,zhang2020adaptive}). This approach replaces the stochastic gradient estimator with
\begin{align}\label{clip}
\overline{G}_\tau(x;\xi):=\min\Big\{1,\frac{\tau}{\|G(x;\xi)\|}\Big\} G(x;\xi)    
\end{align}
for some suitable clipping threshold $\tau>0$. Numerous recent works have analyzed the complexity of SFOMs with gradient clipping for solving \eqref{ucpb} under heavy-tailed noise (e.g., \cite{cutkosky2021high,liu2023breaking,NEURIPS2023_4c454d34,zhang2020adaptive}). Specifically, assuming that $\nabla f$ is Lipschitz continuous, \cite{zhang2020adaptive} established a first-order oracle complexity of $\mathcal{\mathcal{O}}(\epsilon^{-(3\alpha-2)/(\alpha-1)})$ for finding an $\epsilon$-SSP of \eqref{ucpb}, while \cite{cutkosky2021high,NEURIPS2023_4c454d34} established a first-order oracle complexity of $\widetilde{\mathcal{\mathcal{O}}}(\epsilon^{-(3\alpha-2)/(\alpha-1)})$ for finding a point $x$ satisfying $\|\nabla f(x)\|\le\epsilon$ with high probability. Furthermore, under the additional assumption that $\nabla^2 f$ is Lipschitz continuous, \cite{cutkosky2021high} has shown that the complexity bound can be improved to $\widetilde{\mathcal{\mathcal{O}}}(\epsilon^{-(5\alpha-3)/(2\alpha-2)})$. In addition, assuming that $G(\cdot;\xi)$ is almost surely Lipschitz continuous for all $\xi\in\Xi$ with the same Lipschitz constant, namely, 
\begin{align}\label{as-Lip}
\|G(y;\xi)-G(x;\xi)\|\le L\|y-x\|\qquad \forall x,y\in\mathbb{R}^n,\ \mathrm{a.s.}\ \forall \xi\in\Xi    
\end{align}
for some $L>0$, where $\Xi$ is the sample space of $\xi$, \cite{liu2023breaking} established a first-order oracle complexity of $\widetilde{\mathcal{O}}(\epsilon^{-(2\alpha-1)/(\alpha-1)})$ for finding a point $x$ satisfying $\|\nabla f(x)\|\le\epsilon$ with high probability. 

While the aforementioned SFOMs with gradient clipping provide provable convergence guarantees, they suffer from several practical limitations. In particular, these methods require sufficiently large clipping thresholds 
$\tau$ to ensure theoretical convergence, whereas relatively small thresholds are typically used in practice, especially in the training of deep neural networks (see, e.g., \cite{touvron2023llama}). Furthermore, 
setting an appropriate value for 
$\tau$ often depends on explicit knowledge of problem-dependent parameters, such as Lipschitz constants and noise bounds, which are generally unavailable or difficult to estimate in real-world applications. 
A more detailed discussion of these drawbacks can be found in \cite{hubler2024gradient}.

In addition, it is often observed in practice that the commonly used clipped gradient in clipped SFOMs becomes a normalized gradient as the iterates progress; that is, \eqref{clip} reduces to $\overline{G}_\tau(x;\xi) = \tau G(x;\xi)/\|G(x;\xi)\|$ (e.g., see \cite{hubler2024gradient}). This empirical observation motivates replacing gradient clipping in SFOMs with gradient normalization.
Several recent studies 
have shown that, assuming $\nabla f$ is Lipschitz continuous, normalized SFOMs without gradient clipping can achieve a first-order oracle complexity of 
$\mathcal{O}(\epsilon^{-(3\alpha-2)/(\alpha-1)})$ for finding an $\epsilon$-SSP of \eqref{ucpb} under heavy-tailed noise \cite{hubler2024gradient, liu2025nonconvex, sun2024gradient}. Moreover, it has been shown in \cite{hubler2024gradient, liu2025nonconvex} that this complexity bound becomes 
$\mathcal{O}(\epsilon^{-2\alpha/(\alpha-1)})$ if the tail exponent 
$\alpha$ is unknown. In addition, under the assumption that 
$G(\cdot;\xi)$ is almost surely Lipschitz continuous with a common Lipschitz constant, as described in \eqref{as-Lip}, \cite{sun2024gradient} demonstrated that the complexity bound of normalized SFOMs can be further improved to 
$\mathcal{O}(\epsilon^{-(2\alpha-1)/(\alpha-1)})$. While these methods successfully avoid gradient clipping and achieve complexity bounds comparable to those of SFOMs that rely on it, they still suffer from several practical limitations. In particular, the methods in \cite{liu2025nonconvex,sun2024gradient} require explicit knowledge of problem-specific quantities, such as Lipschitz constant and noise bound, in order to properly set algorithmic parameters like step sizes and momentum coefficients. In contrast, \cite{hubler2024gradient} proposed a fully parameter-free SFOM and established a first-order complexity result under the assumption that $\nabla f$ is Lipschitz continuous. However, the achieved complexity is significantly worse than the best-known results when the tail exponent $\alpha$ is known. In addition, the almost sure Lipschitz continuity assumption \eqref{as-Lip} may be restrictive in practice, as it is stronger than the commonly used assumption of Lipschitz continuity in expectation, such as the mean-squared smoothness condition often imposed in the bounded variance setting (see \cite{cutkosky2019momentum, fang2018spider, li2021page}). 

Despite significant recent advances, existing SFOMs for solving problem \eqref{ucpb} under the heavy-tailed noise regime still face the practical limitations discussed above. These limitations motivate the following open questions:
\begin{itemize}
\item Can we develop practical SFOMs that do not require explicit knowledge of problem-specific quantities, while still achieving the best-known complexity?
\item Can we design practical SFOMs that achieve improved complexity under higher-order smoothness condition on $f$?
\item Can we develop practical SFOMs under weaker conditions than the commonly used mean-squared smoothness condition on $G(\cdot;\xi)$?
\end{itemize}

In this paper, we address these questions by proposing three practical SFOMs that use dynamically updated algorithmic parameters, without requiring explicit knowledge of the Lipschitz constant or noise bounds. We establish first-order oracle complexity results for these methods in finding an $\epsilon$-SSP of \eqref{ucpb} under the heavy-tailed noise regime. 
For ease of comparison, we summarize in Table \ref{table:sum-ic} the first-order oracle complexity results of our proposed SFOMs and several existing methods, together with their smoothness assumptions and parameter knowledge requirements.
Moreover, we show that two of our proposed methods achieve improved complexity under higher-order smoothness and a weakly average smoothness condition, respectively. Our main contributions are summarized below.

\begin{table}[htbp!]
\centering
\caption{First-order oracle complexity of our SFOMs and several existing methods under heavy-tailed noise. 
$\dagger$ denotes that clipping has been applied to the method.}
\smallskip
\begin{tabular}{llcl}
\hline
method & oracle complexity & parameter-free &  smoothness \\
\hline
GClip$^\dagger$ \cite{zhang2020adaptive} & $\mathcal{O}(\epsilon^{-(3\alpha-2)/(\alpha-1)})$ & \ding{55}  & Lipschitz $\nabla f$ \\
NSGD-M \cite{hubler2024gradient} & $\mathcal{O}(\epsilon^{-(3\alpha-2)/(\alpha-1)})$ & \ding{55}  & Lipschitz $\nabla f$  \\
Algorithm \ref{alg:unf-sfom-pm} (ours) & $\widetilde{\mathcal{O}}(\epsilon^{-(3\alpha-2)/(\alpha-1)})$ & \ding{55}  & Lipschitz $\nabla f$ \\
NIGT$^\dagger$ \cite{cutkosky2021high} & $\widetilde{\mathcal{O}}(\epsilon^{-(5\alpha-3)/(2\alpha-2)})$ & \ding{55} & Lipschitz $\nabla^2 f$ \\
{Algorithm \ref{alg:unf-sfom}} (ours) & $\widetilde{\mathcal{O}}(\epsilon^{-(p(2\alpha-1)+\alpha-1)/(p(\alpha-1))})$ & \ding{55} & Lipschitz $\mathcal{D}^p f$, $p\ge2$ \\
NSGD-VR \cite{sun2024gradient} & ${\mathcal{O}}(\epsilon^{-(2\alpha-1)/(\alpha-1)})$ & \ding{55} & Lipschitz $G$ a.s. \\
Algorithm \ref{alg:unf-sfom-rm} (ours) & $\widetilde{\mathcal{O}}(\epsilon^{-(2\alpha-1)/(\alpha-1)})$ & \ding{55} & average Lipschitz $G$ \\
\hline
NSGD-M \cite{hubler2024gradient} & $\widetilde{\mathcal{O}}(\epsilon^{-2\alpha/(\alpha-1)})$ & \ding{51} & Lipschitz $\nabla f$  \\
Algorithm \ref{alg:unf-sfom-pm} (ours) & $\widetilde{\mathcal{O}}(\epsilon^{-2\alpha/(\alpha-1)})$ & \ding{51} & Lipschitz $\nabla f$  \\
Algorithm \ref{alg:unf-sfom} (ours) & $\widetilde{\mathcal{O}}(\epsilon^{-(3p\alpha+\alpha)/(2p(\alpha-1))})$ & \ding{51} & Lipschitz $\mathcal{D}^p f$, $p\ge2$  \\
Algorithm \ref{alg:unf-sfom-rm} (ours) & $\widetilde{\mathcal{O}}(\epsilon^{-3\alpha/(2(\alpha-1))})$ & \ding{51} &  average Lipschitz $G$ \\
\hline
\end{tabular}
\label{table:sum-ic}
\end{table}

\begin{itemize}
\item We propose a practical \textit{normalized SFOM with Polyak momentum} for solving problem \eqref{ucpb}, and show that it achieves the best-known complexity for finding an $\epsilon$-SSP under heavy-tailed noise and the Lipschitz smoothness condition on $f$.

\item We propose a practical \textit{normalized SFOM with multi-extrapolated momentum} for solving problem \eqref{ucpb}, and establish a new complexity for finding an $\epsilon$-SSP under heavy-tailed noise and a higher-order smoothness condition on $f$. To the best of our knowledge, this is the first SFOM that leverages higher-order smoothness of $f$ to achieve acceleration. The resulting complexity significantly improves upon the best-known results under the standard Lipschitz smoothness condition.

\item We develop a practical \textit{normalized SFOM with recursive momentum} for solving problem \eqref{ucpb}, and show that it achieves a new complexity for finding an $\epsilon$-SSP under heavy-tailed noise and a weakly average smoothness condition on $G(\cdot;\xi)$. This complexity generalizes existing complexity results under the mean-squared smoothness condition.
\end{itemize}

The rest of this paper is organized as follows. In Section \ref{sec:nap}, we introduce the notation and assumptions used throughout the paper. In Section \ref{sec:nsgm}, we propose normalized SFOMs with momentum and establish complexity bounds for them. Section \ref{sec:ne} presents preliminary numerical results. In Section \ref{sec:pf-man}, we provide the proofs of the main results. 

\section{Notation and assumptions}\label{sec:nap}
Throughout this paper, we use $\R^n$ to denote the $n$-dimensional Euclidean space and $\langle\cdot,\cdot\rangle$ to represent the standard inner product. We use $\|\cdot\|$ to denote the Euclidean norm for vectors and the spectral norm for matrices. For any positive integer $p$ and a $p$th-order continuously differentiable function $\varphi$, we denote by $\Dp\varphi(x)[h_1,\ldots,h_p]$ the $p$th-order directional derivative of $\varphi$ at $x$ along $h_i\in\R^n$, $1\le i\le p$, and 
by $\Dp \varphi(x)[\cdot]$ the associated symmetric $p$-linear form. For any symmetric $p$-linear form $\mathcal{T}[\cdot]$, we define its norm as
\begin{align}\label{def:pnorm}
\|\mathcal{T}\|:=\max_{h_1,\ldots,h_p}\{\mathcal{T}[h_1,\ldots,h_p]:\|h_i\|\le1,1\le i\le p\}. 
\end{align}
For any $x\in\R^n$ and $h_i\in\R^n$ with $1\le i\le p-1$, we define $\nabla^p\varphi(x)(h_1,\ldots,h_{p-1})\in\R^n$ by
\[
\langle \nabla^p\varphi(x)(h_1,\ldots,h_{p-1}), h_p\rangle := \Dp\varphi(x)[h_1,\ldots,h_p]\qquad \forall h_p\in\R^n.
\]
For any $x,h\in\R^n$, we denote $\Dp \varphi(x)[h]^{p}:= \Dp \varphi(x)[h,\ldots,h]$ and $\nabla^p \varphi(x)(h)^{p-1}:=\nabla^p \varphi(x)(h,\ldots,h)$. For any $s\in\R$, we let $\mathrm{sgn}(s)$ be $1$ if $s\ge0$ and $-1$ otherwise. For any positive integer $p$, we define the residual of the $p$th-order Taylor expansion of $\nabla f$ as:
\begin{align}\label{def:taylor-res}
\Rp(y,x) := \nabla f(y) - \sum_{r=1}^p\frac{1}{(r-1)!}\nabla^r f(x)(y-x)^{r-1}\qquad\forall x,y\in\R^n.
\end{align}
In addition, we use $\widetilde{\mathcal{O}}(\cdot)$ to denote $\mathcal{O}(\cdot)$ with logarithmic terms omitted.

We now make the following assumption throughout this paper.
\begin{assumption}\label{asp:basic}
\begin{enumerate}[{\rm (a)}]
\item There exists a finite $f_{\mathrm{low}}$ such that $f(x)\ge f_{\mathrm{low}}$ for all $x\in\R^n$.
\item There exists $L_1>0$ such that $\|\nabla f(x) - \nabla f(y)\|\le L_1\|x-y\|$ for all $x,y\in\R^n$.
\item The stochastic gradient estimator $G:\mathbb{R}^n\times\Xi\to\R^n$ satisfies 
\begin{align*}
\mathbb{E}[G(x;\xi)] = \nabla f(x),\quad\mathbb{E}[\|G(x;\xi) - \nabla f(x)\|^\alpha]\le \sigma^\alpha\qquad\forall x\in\mathbb{R}^n    
\end{align*}
for some $\sigma>0$ and $\alpha\in(1,2]$.
\end{enumerate}    
\end{assumption}

We next make some remarks on Assumption \ref{asp:basic}.

\begin{remark} 
Assumptions \ref{asp:basic}(a) and \ref{asp:basic}(b) are standard. In particular, Assumption \ref{asp:basic}(b) implies that
\begin{align}\label{ineq:1st-desc}
f(y) \le f(x) + \nabla f(x)^T(y-x) + \frac{L_1}{2}\|y-x\|^2\qquad \forall x,y\in\R^n.    
\end{align}
Assumption \ref{asp:basic}(c) states that $G(\cdot;\xi)$ is an unbiased estimator of $\nabla f(\cdot)$, and its $\alpha$th central moment is uniformly bounded. This is weaker than the commonly used variance bounded assumption corresponding to the case $\alpha = 2$. When $\alpha \in (1,2)$, the stochastic gradient noise is said to be heavy-tailed (see, e.g., \cite{zhang2020adaptive}), a phenomenon frequently observed in modern machine learning applications.
\end{remark}

\section{Normalized stochastic first-order methods with
momentum}\label{sec:nsgm}

In this section, we propose practical normalized SFOMs with Polyak momentum, multi-extrapolated momentum, and recursive momentum for solving problem \eqref{ucpb}. We also establish their first-order oracle complexities for finding an $\epsilon$-SSP of \eqref{ucpb} under heavy-tailed noise.

\subsection{A normalized SFOM with Polyak momentum}\label{subsec:nsgd-pm}

In this subsection, we propose a practical normalized SFOM with Polyak momentum for solving problem \eqref{ucpb} and establish its first-order oracle complexity for finding an $\epsilon$-SSP of \eqref{ucpb} under heavy-tailed noise.

Specifically, our practical normalized SFOM with Polyak momentum generates two sequences, $\{m^k\}$ and $\{x^k\}$. At each iteration $k \geq 0$, the direction $m^k$ is computed as a weighted average of stochastic gradients of $f$ evaluated at the iterates $x^0, \ldots, x^k$. The next iterate $x^{k+1}$ is obtained by performing a line search update from $x^k$ along the normalized direction $-m^k/\|m^k\|$, using a suitable step size. The detailed procedure is presented in Algorithm \ref{alg:unf-sfom-pm}, with the specific momentum weights and step sizes defined in Theorems \ref{thm:s-rate-pm} and \ref{thm:un-s-rate-pm}.

\begin{algorithm}[!htbp]
\caption{A normalized SFOM with Polyak momentum}
\label{alg:unf-sfom-pm}
\begin{algorithmic}[0]
\State \textbf{Input:} starting point $x^0\in\R^n$, step sizes $\{\eta_k\}\subset(0,+\infty)$, weighting parameters $\{\theta_k\}\subset(0,1]$.
\State \textbf{Initialize:} $m^{-1}=0$ and $\theta_{-1}=1$.
\For{$k=0,1,2,\ldots$}
\State Compute the search direction:\footnotemark{}
\begin{align}\label{update-mk-pm}
m^k = (1 - \theta_{k-1}) m^{k-1} + \theta_{k-1} G(x^k;\xi^{k}).
\end{align}
\State Update the next iterate:
\begin{align*}
x^{k+1} = x^k - \eta_k\frac{m^k}{\|m^k\|}. 
\end{align*}
\EndFor
\end{algorithmic}
\end{algorithm}
\footnotetext{$\{\xi^k\}$ is a sequence of independently drawn samples.}

It is worth noting that Algorithm \ref{alg:unf-sfom-pm} shares a similar framework with the normalized SGD with momentum proposed in \cite{cutkosky2020momentum}, but they differ in the choice of parameters. Specifically, the normalized SGD with momentum in \cite{cutkosky2020momentum} adopts static parameters that require explicit knowledge of the Lipschitz constant and the noise bound, whereas Algorithm \ref{alg:unf-sfom-pm} employs two types of dynamically updated parameters. One type leverages the knowledge of $\alpha$ when it is available and achieves an oracle complexity with optimal dependence on the accuracy parameter $\epsilon$ (see Theorem~\ref{thm:s-rate-pm}), while the other is fully parameter-free and applies when $\alpha$ is unknown (see Theorem~\ref{thm:un-s-rate-pm}).

The following theorem establishes a complexity bound for Algorithm \ref{alg:unf-sfom-pm} to compute an $\epsilon$-SSP of problem \eqref{ucpb} under the assumption that the tail exponent $\alpha$ is known. Its proof is deferred to Subsection \ref{subsec:pf-pm}.

\begin{theorem}[{{\bf complexity with known $\alpha$}}]\label{thm:s-rate-pm}
Suppose that Assumption \ref{asp:basic} holds. Let $f_{\mathrm{low}}$, $L_1$, $\sigma$, and $\alpha$ be given in Assumption \ref{asp:basic}, and define
\begin{align}\label{M1a}
M_{1,\alpha} := 2(f(x^0) - f_{\mathrm{low}} + 3\sigma^\alpha + L_1 + (\alpha-1)(2/\alpha)^{\alpha/(\alpha-1)} + 3L_1^\alpha).  
\end{align}
Let $\{x^k\}$ be generated by Algorithm \ref{alg:unf-sfom-pm} with input parameters $\{(\eta_k,\theta_k)\}$ given by
\begin{align}\label{pm-eta-theta}
\eta_k=\frac{1}{(k+1)^{(2\alpha-1)/(3\alpha-2)}},\quad \theta_k=\frac{1}{(k+1)^{\alpha/(3\alpha-2)}}\qquad\forall k\ge0.
\end{align}
Then, for any $\epsilon \in (0,1)$, it holds that $\mathbb{E}[\|\nabla f(x^{\iota_K})\|]\le \epsilon$ for all $K$ satisfying
\begin{align*}
K\ge\max\Big\{\Big(\frac{2(3\alpha-2)M_{1,\alpha}}{(\alpha-1)\epsilon}\ln\Big(\frac{2(3\alpha-2)M_{1,\alpha}}{(\alpha-1)\epsilon}\Big)\Big)^{(3\alpha-2)/(\alpha-1)},3\Big\},     
\end{align*}
where $\iota_K$ is uniformly drawn from $\{0,\ldots,K-1\}$.
\end{theorem}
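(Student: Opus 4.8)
The plan is to run a standard Lyapunov/potential argument for normalized momentum SGD, tracking the error $\|m^k - \nabla f(x^k)\|$ and the function value $f(x^k)$ simultaneously, but with the twist that the algorithmic parameters $(\eta_k,\theta_k)$ are the explicit decaying sequences in \eqref{pm-eta-theta} rather than tuned constants. First I would record the descent inequality along the normalized step: using \eqref{ineq:1st-desc} with $x=x^k$, $y=x^{k+1}=x^k-\eta_k m^k/\|m^k\|$, one gets $f(x^{k+1}) \le f(x^k) - \eta_k \|\nabla f(x^k)\| + 2\eta_k\|m^k-\nabla f(x^k)\| + \tfrac{L_1}{2}\eta_k^2$, via the elementary bound $-\langle \nabla f(x^k), m^k/\|m^k\|\rangle \le -\|\nabla f(x^k)\| + 2\|m^k-\nabla f(x^k)\|$. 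Summing this over $k=0,\dots,K-1$ and rearranging will express $\sum_k \eta_k \|\nabla f(x^k)\|$ in terms of $f(x^0)-f_{\mathrm{low}}$, the telescoped step-size-squared sum $\sum_k \eta_k^2$, and the crucial quantity $\sum_k \eta_k \mathbb{E}\|m^k-\nabla f(x^k)\|$.

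The core of the argument is bounding the momentum error. Writing $e^k := m^k - \nabla f(x^k)$ and using the recursion \eqref{update-mk-pm}, I would split $e^k = (1-\theta_{k-1})(m^{k-1}-\nabla f(x^{k-1})) + (1-\theta_{k-1})(\nabla f(x^{k-1})-\nabla f(x^k)) + \theta_{k-1}(G(x^k;\xi^k)-\nabla f(x^k))$. The middle (bias) term is controlled by Assumption \ref{asp:basic}(b) and the fact that $\|x^k-x^{k-1}\|=\eta_{k-1}$ (the normalized step has length exactly $\eta_{k-1}$), giving $\le L_1\eta_{k-1}$. The last (noise) term has $\alpha$th moment bounded by $\theta_{k-1}^\alpha\sigma^\alpha$ by Assumption \ref{asp:basic}(c). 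The heavy-tailed subtlety is that I cannot square and take expectations; instead I would work with the $\ell_\alpha$-type quantity, either bounding $\mathbb{E}\|e^k\|^\alpha$ by a recursion of the form $\mathbb{E}\|e^k\|^\alpha \le (1-\theta_{k-1})\mathbb{E}\|e^{k-1}\|^\alpha + C(L_1\eta_{k-1})^\alpha/\theta_{k-1}^{\alpha-1} + C\theta_{k-1}\sigma^\alpha$ (using convexity of $t\mapsto t^\alpha$ together with a weighted Young-type splitting to absorb the deterministic bias against the contraction factor, and using that the martingale-difference noise term, being mean zero, satisfies $\mathbb{E}\|(1-\theta)v + \theta w\|^\alpha \le (1-\theta)\|v\|^\alpha + \theta\mathbb{E}\|w\|^\alpha$ when $\mathbb{E} w=0$, $\alpha\in(1,2]$ — the standard "$\alpha$-moment contraction" lemma). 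Then $\mathbb{E}\|e^k\| \le (\mathbb{E}\|e^k\|^\alpha)^{1/\alpha}$ by Jensen. Unrolling this recursion with the specific choices $\eta_k = (k+1)^{-(2\alpha-1)/(3\alpha-2)}$ and $\theta_k = (k+1)^{-\alpha/(3\alpha-2)}$ — which are calibrated precisely so that $\eta_k^\alpha/\theta_k^{\alpha-1}$ and $\theta_k\sigma^\alpha$ both decay at the same rate and so that the "memory length" $1/\theta_k$ matches — will produce $\mathbb{E}\|e^k\|^\alpha = \widetilde{\mathcal{O}}(k^{-\alpha/(3\alpha-2)})$, hence $\mathbb{E}\|e^k\| = \widetilde{\mathcal{O}}(k^{-1/(3\alpha-2)})$, so that $\sum_{k<K}\eta_k\mathbb{E}\|e^k\|$ and $\sum_{k<K}\eta_k^2$ are each $\widetilde{\mathcal{O}}(K^{(\alpha-1)/(3\alpha-2)})$, matching the growth of $\sum_{k<K}\eta_k$.

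Finally I would combine: dividing the summed descent inequality by $\sum_{k<K}\eta_k \asymp K^{(\alpha-1)/(3\alpha-2)}$ gives $\min_{k<K}\mathbb{E}\|\nabla f(x^k)\| \le \mathbb{E}\|\nabla f(x^{\iota_K})\| \le \widetilde{\mathcal{O}}\!\big(K^{-(\alpha-1)/(3\alpha-2)}\big)$ after using that $\iota_K$ is uniform on $\{0,\dots,K-1\}$ and the $\eta_k$ are (up to constants) comparable to their average over the range. Carefully collecting the constants — $f(x^0)-f_{\mathrm{low}}$, the $\sigma^\alpha$ and $L_1$, $L_1^\alpha$ terms from the error recursion, and the $(\alpha-1)(2/\alpha)^{\alpha/(\alpha-1)}$ term which is exactly what arises from the weighted Young inequality used to split the bias term against the contraction — reproduces $M_{1,\alpha}$ as defined in \eqref{M1a}, and solving $\widetilde{\mathcal{O}}(K^{-(\alpha-1)/(3\alpha-2)}) \le \epsilon$ for $K$ yields the stated threshold, including the logarithmic factor inside (from the $\widetilde{\mathcal{O}}$, i.e. the harmonic-sum logarithm in $\sum \eta_k \mathbb{E}\|e^k\|$). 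The main obstacle I expect is the heavy-tailed error recursion: getting a clean, constant-explicit version of the "$\alpha$-moment of a contraction plus mean-zero noise plus deterministic drift" inequality — in particular handling the cross terms without squaring and choosing the Young-inequality weights so the bias term is genuinely absorbed by the $(1-\theta_{k-1})$ factor — is the delicate part; everything downstream is bookkeeping on the polynomial rates.
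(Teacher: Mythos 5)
Your overall architecture---normalized descent lemma, an $\alpha$-th moment recursion for $e^k=m^k-\nabla f(x^k)$, parameter calibration, and summation---is the same skeleton as the paper's. Where you diverge is in how the linear error term $\eta_k\,\mathbb{E}\|e^k\|$ is handled: you propose to unroll the recursion for $\mathbb{E}\|e^k\|^\alpha$, pass to $\mathbb{E}\|e^k\|$ by Jensen, and sum; the paper instead never takes the $1/\alpha$-th root, but forms the weighted potential ${\mathcal P}_k=f(x^k)+p_k\|e^k\|^\alpha$ with $p_k=(k+1)^{(\alpha^2-3\alpha+2)/(3\alpha-2)}$ and absorbs $2\eta_k\|e^k\|$ into $\theta_kp_k\|e^k\|^\alpha$ via Young's inequality---that absorption, not the bias splitting you attribute it to, is where the constant $(\alpha-1)(2/\alpha)^{\alpha/(\alpha-1)}$ in \eqref{M1a} comes from. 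Both routes can be made to work, so this difference is cosmetic; your version is the more classical one, the paper's keeps every term in $\alpha$-power form and makes the constant bookkeeping cleaner.

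There are, however, two concrete quantitative slips that would break your write-up as stated. First, the noise term must enter the recursion with coefficient $\theta_{k-1}^\alpha\sigma^\alpha$, obtained from the mean-zero expansion $\mathbb{E}\|u+z\|^\alpha\le\|u\|^\alpha+2\mathbb{E}\|z\|^\alpha$ (the paper's inequality \eqref{open-alpha}) applied to $z=\theta_{k-1}(G-\nabla f)$. The ``$\alpha$-moment contraction lemma'' you cite, $\mathbb{E}\|(1-\theta)v+\theta w\|^\alpha\le(1-\theta)\|v\|^\alpha+\theta\mathbb{E}\|w\|^\alpha$, is just convexity, does not use $\mathbb{E}w=0$, and yields the weaker coefficient $\theta_{k-1}\sigma^\alpha$ that appears in your displayed recursion. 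With $\theta_k\sigma^\alpha$ in place of $\theta_k^\alpha\sigma^\alpha$ the calibration genuinely fails: balancing the Young term against the noise term forces $\theta_kp_k\asymp(k+1)^{-(2\alpha-1)/(3\alpha-2)}$ exactly, and then $\sum_{k<K}\theta_kp_k\asymp K^{(\alpha-1)/(3\alpha-2)}\asymp\sum_{k<K}\eta_k$, so the averaged bound has a nonvanishing $\sigma^\alpha$ floor and the theorem's rate is unreachable. Second, your claimed decay $\mathbb{E}\|e^k\|^\alpha=\widetilde{\mathcal{O}}(k^{-\alpha/(3\alpha-2)})$ (hence $\mathbb{E}\|e^k\|=\widetilde{\mathcal{O}}(k^{-1/(3\alpha-2)})$) is wrong for $\alpha<2$: the fixed point of the correct recursion is $b_k/\theta_k$ with $b_k\asymp\theta_k^\alpha\asymp\eta_k^\alpha\theta_k^{1-\alpha}$, giving $\mathbb{E}\|e^k\|^\alpha=\widetilde{\mathcal{O}}(k^{-\alpha(\alpha-1)/(3\alpha-2)})$ and $\mathbb{E}\|e^k\|=\widetilde{\mathcal{O}}(k^{-(\alpha-1)/(3\alpha-2)})$. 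With that corrected rate, $\eta_k\mathbb{E}\|e^k\|\asymp(k+1)^{-1}$, so $\sum_{k<K}\eta_k\mathbb{E}\|e^k\|=\mathcal{O}(\ln K)$; it must be $o\big(\sum_{k<K}\eta_k\big)$, not ``matching the growth of $\sum_{k<K}\eta_k$'' as you write---if it truly matched, the averaged gradient bound would be $\Theta(1)$ and nothing would be proved. Once these two points are fixed, the remaining steps (harmonic-sum logarithm, Lemma \ref{lem:rate-complexity} to invert the rate into the threshold on $K$) go through as you describe.
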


The next theorem establishes a complexity bound for Algorithm \ref{alg:unf-sfom-pm} to compute an $\epsilon$-SSP of problem \eqref{ucpb} without requiring prior knowledge of the tail exponent $\alpha$. Its proof is deferred to Subsection \ref{subsec:pf-pm}.

\begin{theorem}[{{\bf complexity with unknown $\alpha$}}]\label{thm:un-s-rate-pm}
Suppose that Assumption \ref{asp:basic} holds. Let $f_{\mathrm{low}}$, $L_1$, $\sigma$, and $\alpha$ be given in Assumption \ref{asp:basic}, and define
\begin{align}\label{M1a-o}
\widetilde{M}_{1,\alpha}:= 2(f(x^0) - f_{\mathrm{low}} + \sigma^\alpha + L_1/2 + 3L_1^\alpha),\quad\widehat{M}_{1,\alpha} := 2((\alpha-1)(2/\alpha)^{\alpha/(\alpha-1)} + 2\sigma^\alpha).
\end{align}
Let $\{x^k\}$ be generated by Algorithm \ref{alg:unf-sfom-pm} with input parameters $\{(\eta_k,\theta_k)\}$ given by
\begin{align}\label{pm-eta-theta-o}
\eta_k=\frac{1}{(k+1)^{3/4}},\quad \theta_k=\frac{1}{(k+1)^{1/2}}\qquad\forall k\ge0. 
\end{align}
Then, for any $\epsilon \in (0,1)$, it holds that $\mathbb{E}[\|\nabla f(x^{\iota_K})\|]\le \epsilon$ for all $K$ satisfying
\[
K\ge \max\Big\{\Big(\frac{16\widetilde{M}_{1,\alpha}}{\epsilon}\ln\Big(\frac{16\widetilde{M}_{1,\alpha}}{\epsilon}\Big)\Big)^{4},\Big(\frac{8\alpha\widehat{M}_{1,\alpha}}{(\alpha-1)\epsilon}\ln\Big(\frac{8\alpha\widehat{M}_{1,\alpha}}{(\alpha-1)\epsilon}\Big)\Big)^{2\alpha/(\alpha-1)},3\Big\},
\]
where $\iota_K$ is uniformly drawn from $\{0,\ldots,K-1\}$.
\end{theorem}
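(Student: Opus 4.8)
\textbf{Proof plan for Theorem \ref{thm:un-s-rate-pm}.}

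The plan is to mimic the structure underlying Theorem \ref{thm:s-rate-pm}, but to carry a tail exponent $\alpha$ that is \emph{not} used in the parameter choices, so that the two terms of $M_{1,\alpha}$ (the ``deterministic'' part $\widetilde M_{1,\alpha}$ coming from the descent inequality and the momentum-error recursion, and the ``noise'' part $\widehat M_{1,\alpha}$ coming from bounding the $\alpha$th-moment contributions) get handled separately and end up producing the two distinct terms in the final lower bound on $K$. First I would set $\varepsilon_k := m^k - \nabla f(x^k)$ and run the standard decomposition. From the descent inequality \eqref{ineq:1st-desc} applied along $x^{k+1}=x^k-\eta_k m^k/\|m^k\|$, one gets
\begin{align*}
f(x^{k+1}) \le f(x^k) - \eta_k \|\nabla f(x^k)\| + 2\eta_k \|\varepsilon_k\| + \tfrac{L_1}{2}\eta_k^2,
\end{align*}
using the elementary bound $\langle \nabla f(x^k), m^k/\|m^k\|\rangle \ge \|\nabla f(x^k)\| - 2\|\varepsilon_k\|$. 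Summing over $k=0,\dots,K-1$, rearranging, and dividing by $\sum_k \eta_k$ will give a bound on $\mathbb{E}[\|\nabla f(x^{\iota_K})\|]$ of the form
\begin{align*}
\frac{f(x^0)-f_{\mathrm{low}} + \tfrac{L_1}{2}\sum_k \eta_k^2 + 2\sum_k \eta_k \mathbb{E}\|\varepsilon_k\|}{\sum_k \eta_k}.
\end{align*}

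The core work is the recursive control of $\mathbb{E}\|\varepsilon_k\|$. Expanding the momentum update \eqref{update-mk-pm}, one writes
\begin{align*}
\varepsilon_k = (1-\theta_{k-1})\varepsilon_{k-1} + (1-\theta_{k-1})\big(\nabla f(x^{k-1}) - \nabla f(x^k)\big) + \theta_{k-1}\big(G(x^k;\xi^k) - \nabla f(x^k)\big).
\end{align*}
The middle term is bounded in norm by $L_1(1-\theta_{k-1})\eta_{k-1}$ via Assumption \ref{asp:basic}(b); the last term is a martingale-difference-type noise whose $\alpha$th moment is at most $\sigma^\alpha$. Because $\alpha<2$ in general, I would not square; instead I would track $\mathbb{E}\|\varepsilon_k\|$ directly (or $\mathbb{E}\|\varepsilon_k\|^\alpha$ and then apply Jensen / the power-mean inequality), using a Bahr--Esseen-type or Young-type inequality to combine the conditionally-unbiased noise term. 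Concretely, conditioning on the filtration and using that the noise term has mean zero, $\mathbb{E}\|\varepsilon_k\|^\alpha$ obeys a recursion roughly of the form $a_k \le (1-\theta_{k-1})^\alpha a_{k-1} + (\text{bias})^\alpha + \theta_{k-1}^\alpha\sigma^\alpha$ after suitable convexity manipulations; unrolling this recursion with $\theta_k=(k+1)^{-1/2}$ and $\eta_k=(k+1)^{-3/4}$ yields $\mathbb{E}\|\varepsilon_k\| = \mathcal{O}(k^{-1/4})$ plus a noise contribution of order $\theta_k^{(\alpha-1)}\sigma = \mathcal{O}(k^{-(\alpha-1)/2}\sigma)$ — this split is exactly what later produces the two regimes. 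The constants $3L_1^\alpha$ and $(\alpha-1)(2/\alpha)^{\alpha/(\alpha-1)}+2\sigma^\alpha$ in \eqref{M1a-o} should emerge from carefully bounding the geometric-type sums $\sum_j \theta_{j}^{\alpha}\prod_{i>j}(1-\theta_i)$ and $\sum_j (1-\theta_j)\eta_j \prod_{i>j}(1-\theta_i)$, where the constant $(2/\alpha)^{\alpha/(\alpha-1)}$ hints at an application of Young's inequality with conjugate exponents $\alpha$ and $\alpha/(\alpha-1)$ to decouple a product.

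Once $\mathbb{E}\|\varepsilon_k\| \le C_1 k^{-1/4} + C_2 \sigma k^{-(\alpha-1)/2}$ is in hand, I would plug it into the averaged-gradient bound: $\sum_{k=0}^{K-1}\eta_k \asymp K^{1/4}$, $\sum_k \eta_k^2 \asymp \ln K$, $\sum_k \eta_k \mathbb{E}\|\varepsilon_k\| \asymp C_1 \ln K + C_2\sigma \sum_k k^{-3/4-(\alpha-1)/2}$. Dividing by $\sum_k\eta_k \asymp K^{1/4}$ gives a bound $\mathcal{O}(\widetilde M_{1,\alpha}\, K^{-1/4}\ln K) + \mathcal{O}(\widehat M_{1,\alpha}\, K^{-(\alpha-1)/(2\alpha)}\ln K)$ after computing the relevant exponent on the noise term (the exponent $(\alpha-1)/(2\alpha)$ is what inverts to $2\alpha/(\alpha-1)$). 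Setting each term $\le \varepsilon/2$ and inverting — using the standard fact that $x^{-\beta}\ln x \le \varepsilon$ is implied by $x \ge (c/\varepsilon \cdot \ln(c/\varepsilon))^{1/\beta}$ — produces the two competing lower bounds on $K$, and taking the max (together with the trivial $K\ge 3$ to make the logarithms well-behaved) finishes the proof. The main obstacle I anticipate is the error recursion in the heavy-tailed regime: keeping everything at the level of first moments (or $\alpha$th moments) rather than second moments, and correctly splitting the bias-driven $k^{-1/4}$ decay from the noise-driven $k^{-(\alpha-1)/2}$ decay, so that the two pieces land in $\widetilde M_{1,\alpha}$ and $\widehat M_{1,\alpha}$ respectively with the stated explicit constants — in particular getting the $(2/\alpha)^{\alpha/(\alpha-1)}$ factor right will require a careful Young's-inequality bookkeeping.
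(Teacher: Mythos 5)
Your plan is sound and contains all the key ingredients of the paper's argument: the normalized-direction descent bound $f(x^{k+1})\le f(x^k)-\eta_k\|\nabla f(x^k)\|+2\eta_k\|\varepsilon_k\|+\tfrac{L_1}{2}\eta_k^2$, the $\alpha$th-moment recursion $a_{k+1}\le(1-\theta_k)a_k+3L_1^\alpha\eta_k^\alpha\theta_k^{1-\alpha}+2\sigma^\alpha\theta_k^\alpha$ obtained from a Bahr--Esseen/Young-type expansion of $\|u+v\|^\alpha$, the Young decoupling with conjugate exponents $\alpha$ and $\alpha/(\alpha-1)$ that produces the factor $(2/\alpha)^{\alpha/(\alpha-1)}$, the series estimates, and the log-inversion to get the threshold on $K$. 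The organizational difference is that you propose the classical two-stage route --- first unroll the recursion to get an explicit decay of $\mathbb{E}\|\varepsilon_k\|$, then substitute into the summed descent inequality --- whereas the paper packages everything into a single Lyapunov descent for ${\mathcal P}_k=f(x^k)+p_k\|m^k-\nabla f(x^k)\|^\alpha$ with the weight $p_k=(k+1)^{(2\alpha^2-5\alpha+2)/(4\alpha)}$, absorbing the cross term $2\eta_k\|\varepsilon_k\|$ into $\theta_kp_k\|\varepsilon_k\|^\alpha$ via Young's inequality and then telescoping. The two are essentially interchangeable (your Young step with a free parameter $\lambda_k=\theta_kp_k$ reconstructs the potential argument), but the potential form lets the paper read off the exact constants $\widetilde M_{1,\alpha}$ and $\widehat M_{1,\alpha}$ from a single sum of the form $\sum_k(c_1(k+1)^{-3/2}+c_2(k+1)^{-1}+c_3(k+1)^{-1}K^{(2-\alpha)/(4\alpha)})$; with your unrolling you would more naturally obtain $\alpha$th roots of these constants (e.g.\ $3^{1/\alpha}L_1$ rather than $3L_1^\alpha$), so reproducing the stated $\widetilde M_{1,\alpha},\widehat M_{1,\alpha}$ verbatim requires doing the Young step exactly as in the potential version. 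One slip to fix: after taking the $1/\alpha$ root of $a_k\approx 2\sigma^\alpha\theta_k^{\alpha-1}$ the noise contribution to $\mathbb{E}\|\varepsilon_k\|$ decays like $k^{-(\alpha-1)/(2\alpha)}$, not $k^{-(\alpha-1)/2}$, and the corresponding sum is $\sum_k k^{-3/4-(\alpha-1)/(2\alpha)}$; your final exponent $(\alpha-1)/(2\alpha)$ and the resulting $K\gtrsim\epsilon^{-2\alpha/(\alpha-1)}$ are nevertheless correct.
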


\begin{remark}
(i) From Theorems~\ref{thm:s-rate-pm} and \ref{thm:un-s-rate-pm}, we observe that under Assumption~\ref{asp:basic}, Algorithm~\ref{alg:unf-sfom-pm} achieves a first-order oracle complexity of $\widetilde{\mathcal{O}}(\epsilon^{-(3\alpha-2)/(\alpha-1)})$ for finding an $\epsilon$-SSP of problem~\eqref{ucpb} when the tail exponent $\alpha$ is known, and $\widetilde{\mathcal{O}}(\epsilon^{-2\alpha/(\alpha-1)})$ when $\alpha$ is unknown. These results match, up to logarithmic factors, the best-known complexities for SFOMs with gradient clipping~\cite{cutkosky2021high,NEURIPS2023_4c454d34,zhang2020adaptive} and normalized SFOMs without gradient clipping~\cite{hubler2024gradient,liu2025nonconvex,sun2024gradient}.
It is worth mentioning that these complexity results exhibit a worse dependence on the Lipschitz constant and the noise bound than than that achieved by NSGD-M \cite[Appendix D.2]{hubler2024gradient}. However, the latter method employs static parameters that require explicit knowledge of both the Lipschitz constant and the noise bound, whereas Algorithm~\ref{alg:unf-sfom-pm} uses two types of dynamically updated parameters, as specified in Theorems~\ref{thm:s-rate-pm} and~\ref{thm:un-s-rate-pm}.

(ii) As shown in Theorem \ref{thm:un-s-rate-pm}, when $\alpha \in (1,2)$ is unknown, the step-size and momentum choices---which coincide with those used in the case where $\alpha$ is known to be $2$---still guarantee convergence of the algorithm. However, these choices are considerably more conservative than those specifically chosen for the case where $\alpha \in (1,2)$ is known. In particular, they decay faster than the step-size and momentum sequences designed for the known-$\alpha$ setting (see Theorem~1).

(iii) When the tail exponent $\alpha$ is unknown, Algorithm~\ref{alg:unf-sfom-pm} with $\{(\eta_k,\theta_k)\}$ specified by \eqref{pm-eta-theta-o} resembles the parameter-free SFOM with momentum proposed in \cite[Appendix D]{hubler2024gradient}. Nevertheless, our complexity analysis is fundamentally  different from that of \cite{hubler2024gradient} and other existing works, as it is based on descent properties of a novel potential sequence defined in \eqref{def:pot-pm}. One key benefit of our analysis is that it naturally provides a per-iteration descent property for Algorithm \ref{alg:unf-sfom-pm} as a byproduct (see Lemma \ref{lem:rate-pm}). In contrast, prior analyses under heavy-tailed noise often lack such a result, since a descent relation for the gradient error sequence $\{m^k-\nabla f(x^k)\}$ between two consecutive iterations is not established; instead, they bound the accumulated gradient errors over all previous iterations (see, e.g., \cite[Theorem 1]{cutkosky2020momentum}). Building on this novel framework, we further analyze SFOMs with accelerated momentum schemes in Subsections \ref{subsec:nsgd-mem} and \ref{subsec:nsgd-rm}, obtaining the first complexity results for SFOMs under higher-order smoothness and average smoothness, respectively.
\end{remark} 

\subsection{A normalized SFOM with multi-extrapolated momentum}\label{subsec:nsgd-mem}

In this subsection, we propose a practical normalized SFOM with multi-extrapolated momentum for solving problem \eqref{ucpb} and establish its first-order oracle complexity for finding an $\epsilon$-SSP of \eqref{ucpb} under heavy-tailed noise.


Specifically, our practical normalized SFOM with multi-extrapolated momentum generates three sequences: $\{z^{k,t}\}$, $\{m^k\}$, and $\{x^k\}$. At each iteration $k \geq 0$, the points $z^{k,1}, \ldots, z^{k,q}$ are computed by extrapolating $x^{k-1}$ and $x^k$ using a set of extrapolation weights. The direction $m^k$ is then formed as a weighted average of stochastic gradients of $f$ evaluated at the extrapolated points $\{z^{i,t}\}_{0 \leq i \leq k, 1 \leq t \leq q}$. Finally, $x^{k+1}$ is computed via a line search update at $x^k$ using a suitable step size and the normalized direction $-m^k/\|m^k\|$. The detailed procedure is described in Algorithm~\ref{alg:unf-sfom}, with the extrapolation weights, momentum weights, and step sizes specified in Theorems~\ref{thm:known-rate-mem} and~\ref{thm:unknown-rate-mem}.

\begin{algorithm}[!htbp]
\caption{A normalized SFOM with multi-extrapolated momentum}
\label{alg:unf-sfom}
\begin{algorithmic}[0]
\State \textbf{Input:} starting point $x^0\in\R^n$, step sizes $\{\eta_k\}\subset(0,+\infty)$, extrapolation count $q\ge1$, extrapolation parameters $\{\gamma_{k,t}\}\subset(0,1)$, weighting parameters $\{\theta_{k,t}\}$ with $\sum_{t=1}^{q}\theta_{k,t}\in(0,1)$ for all $k\ge0$.
\State \textbf{Initialize:} $x^{-1}=x^0$, $m^{-1}=0$, and $(\gamma_{-1,t},\theta_{-1,t})=(1,1/q)$ for all $1\le t\le q$.
\For{$k=0,1,2,\ldots$}
\State Perform $q$ separate extrapolations: 
\begin{align}\label{update-zk}
z^{k,t} = x^k + \frac{1-\gamma_{k-1,t}}{\gamma_{k-1,t}}(x^k - x^{k-1})\qquad \forall 1\le t\le q.
\end{align}
\State Compute the search direction:\footnotemark{}
\begin{align}\label{update-mk}
m^k = \Big(1 - \sum_{t=1}^{q}\theta_{k-1,t}\Big) m^{k-1} + \sum_{t=1}^{q}\theta_{k-1,t}G(z^{k,t};\xi^k).
\end{align}
\State Update the next iterate:
\begin{align*}
x^{k+1} = x^k - \eta_k\frac{m^k}{\|m^k\|}. 
\end{align*}
\EndFor
\end{algorithmic}
\end{algorithm}

It is worth mentioning that Algorithm~\ref{alg:unf-sfom} is a significant generalization of the normalized SGD with implicit gradient transport (NIGT) proposed in~\cite{cutkosky2020momentum}. When $q = 1$, Algorithm~\ref{alg:unf-sfom} and NIGT share a similar algorithmic framework, but they differ in their choices of algorithmic parameters. Specifically, NIGT employs static parameters that require explicit knowledge of the Lipschitz constant and the noise bound, whereas Algorithm~\ref{alg:unf-sfom} uses dynamically updated parameters that do not depend on such problem-specific quantities. Moreover, when $q > 1$, Algorithm~\ref{alg:unf-sfom} is entirely new to the literature and, to the best of our knowledge, is the first method capable of exploiting arbitrary high-order smoothness to achieve acceleration.

Before analyzing the complexity of Algorithm~\ref{alg:unf-sfom} for computing an approximate solution to problem~\eqref{ucpb}, we introduce an additional assumption regarding the high-order smoothness of the objective function~$f$.

\begin{assumption}\label{asp:high-smooth}
The function $f$ is $p$th-order continuously differentiable in $\R^n$ for some $p\ge2$, and moreover, there exists some $L_p>0$ such that $\|\Dp f(x) - \Dp f(y)\| \le L_p\|x-y\|$ for all $x,y\in\R^n$.    
\end{assumption}

The following theorem establishes a complexity bound for Algorithm \ref{alg:unf-sfom} to compute an $\epsilon$-SSP of problem \eqref{ucpb} under the assumption that the tail exponent $\alpha$ is known. Its proof is deferred to Subsection \ref{subsec:pf-mem}.
\footnotetext{$\{\xi^k\}$ is a sequence of independently drawn samples. Alternatively, one may draw $q$ independent samples $\xi^{k,t}$, $1\le t\le q$, at every $k$th iteration for computing $G(z^{k,t};\xi^{k,t})$, $1\le t\le q$.
}
\begin{theorem}[{{\bf complexity with known $\alpha$}}]\label{thm:known-rate-mem}
Suppose that Assumptions \ref{asp:basic} and \ref{asp:high-smooth} hold. Let $f_{\mathrm{low}}$, $L_1$, $\sigma$, $\alpha$, $p$ and $L_p$ be given in Assumptions \ref{asp:basic} and \ref{asp:high-smooth}, and define
\begin{align}
M_{p,\alpha} &:= 4\Big(f(x^0) - f_{\mathrm{low}} + 4^{1/3}\sigma^\alpha + L_1/2 + 30^{1/(\alpha-1)}(\alpha-1)(2/\alpha)^{\alpha/(\alpha-1)} + 306 p^{2\alpha p} L_p^\alpha/(p!)^\alpha \nonumber\\ 
&\quad \ +64(p-1)^\alpha\sigma^\alpha\Big).   \label{def:Mpa-known}
\end{align}
Let $\{x^k\}$ be  generated by Algorithm \ref{alg:unf-sfom} with input parameters $q=p-1$, and $\{\eta_k\}$ and $\{(\gamma_{k,t},\theta_{k,t})\}$ given by
\begin{align}
&\eta_k=\frac{1}{(k+4)^{(p\alpha+\alpha-1)/(p(2\alpha-1)+\alpha-1)}}\qquad\forall k\ge0,\label{def:eta-known-rate-mem}\\
&\gamma_{k,t}=\frac{\gamma_k}{t^2},\quad \theta_{k,t}=\frac{\prod_{1\le s\le p-1,s\neq t}(1-s^2/\gamma_k)}{(t^2/\gamma_k)\prod_{1\le s\le p-1,s\neq t}((t^2-s^2)/\gamma_k)}\qquad\forall 1\le t\le p-1,k\ge0,\label{def:theta-known-rate-mem}
\end{align}
where
\begin{align}\label{def:gmak-mem-know}
\gamma_k=\frac{1}{(k+4)^{p\alpha/(p(2\alpha-1)+\alpha-1)}}\qquad\forall k\ge0.
\end{align}
Then, for any $\epsilon \in (0,1)$, it holds that $\mathbb{E}[\|\nabla f(x^{\iota_K})\|]\le \epsilon$ for all $K$ satisfying
\[
K\ge  \max\Big\{\Big(\frac{2(p(2\alpha-1) + \alpha -1)M_{p,\alpha}}{p(\alpha-1)\epsilon}\ln\Big(\frac{2(p(2\alpha-1) + \alpha -1)M_{p,\alpha}}{p(\alpha-1)\epsilon}\Big)\Big)^{(p(2\alpha-1)+\alpha-1)/(p(\alpha-1))},5\Big\},
\]
where $\iota_K$ is uniformly drawn from $\{0,\ldots,K-1\}$.
\end{theorem}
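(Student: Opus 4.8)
The plan is to mimic the potential-sequence analysis indicated for Algorithm~\ref{alg:unf-sfom-pm} (the ``novel potential sequence'' remark), but now exploiting the $p$th-order extrapolation to reduce the bias of $m^k$ relative to $\nabla f(x^k)$. First I would set up the key error decomposition $m^k - \nabla f(x^k) = b^k + n^k$, where $b^k$ is a deterministic ``bias'' term and $n^k$ is a martingale-difference ``noise'' term. The whole point of the multi-extrapolation with weights $\{\theta_{k,t}\}$ chosen by \eqref{def:theta-known-rate-mem} is that these weights solve a Vandermonde-type linear system so that $\sum_{t=1}^{p-1}\theta_{k,t}\,\nabla f(z^{k,t})$ cancels the low-order Taylor terms of $\nabla f$ around $x^k$ up to order $p-1$; hence, using $\|\cR_p(y,x)\|\le \tfrac{L_p}{p!}\|y-x\|^p$ from Assumption~\ref{asp:high-smooth} together with \eqref{def:taylor-res} and the extrapolation identity \eqref{update-zk}, the per-step bias introduced at iteration $k$ is $O\big((\eta_{k-1}/\gamma_{k-1})^p \cdot p^{2\alpha p}L_p/p!\big)$ in norm (the $z^{k,t}$ lie at distance $O(\eta_{k-1}/\gamma_{k-1})$ from $x^k$, with $t^{-2}$-type scaling, explaining the $p^{2\alpha p}$ factors in \eqref{def:Mpa-known}). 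I would then unroll the recursion \eqref{update-mk} to write $b^k$ and $n^k$ as weighted sums over past iterations with the standard momentum product weights $\prod_{j}(1-\Theta_j)$, where $\Theta_j:=\sum_{t}\theta_{j,t}$.

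Next I would control the noise term $n^k$ in $L^\alpha$. Because $G(\cdot;\xi)$ has only a bounded $\alpha$th moment with $\alpha\in(1,2]$, the right tool is the Bahr--Esseen / von~Bahr--Esseen inequality: for a martingale difference sequence, $\E\|\sum_j a_j \Delta_j\|^\alpha \le 2\sum_j |a_j|^\alpha \E\|\Delta_j\|^\alpha$. Applied to the momentum-weighted sum this yields $\E\|n^k\|^\alpha \lesssim \sigma^\alpha \sum_{j\le k} w_{k,j}^\alpha$ where $w_{k,j}$ are the effective weights; with the schedule \eqref{def:theta-known-rate-mem}--\eqref{def:gmak-mem-know} one computes $\sum_j w_{k,j}^\alpha = O(\gamma_k^{\alpha-1})$ up to constants (this is the usual momentum variance-reduction estimate, now in the $\alpha$-moment version, and it is where the $(p-1)^\alpha\sigma^\alpha$ term in \eqref{def:Mpa-known} comes from). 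Combining, $\E\|m^k-\nabla f(x^k)\| \le \E\|b^k\| + (\E\|n^k\|^\alpha)^{1/\alpha}$ is bounded by a sum of terms of the form (accumulated bias) plus $\sigma\gamma_k^{(\alpha-1)/\alpha}$ plus the ``lag'' term $\sum_{j<k} w_{k,j}\sum_{i=j}^{k-1}\|\nabla f(x^{i+1})-\nabla f(x^i)\| = O(\sum_j w_{k,j}\sum_i \eta_i)$ coming from $L_1$-smoothness (Assumption~\ref{asp:basic}(b)).

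The heart of the argument is then the potential/descent step. Using \eqref{ineq:1st-desc} along the normalized update $x^{k+1}=x^k-\eta_k m^k/\|m^k\|$, and the elementary bound $\langle \nabla f(x^k), m^k/\|m^k\|\rangle \ge \|\nabla f(x^k)\| - 2\|m^k-\nabla f(x^k)\|$, one gets $f(x^{k+1}) \le f(x^k) - \eta_k\|\nabla f(x^k)\| + 2\eta_k\|m^k-\nabla f(x^k)\| + \tfrac{L_1}{2}\eta_k^2$. I would then add a carefully chosen multiple of an auxiliary error-potential (tracking $\E\|m^k-\nabla f(x^k)\|$-type quantities, or rather their $\alpha$-moment surrogates) to absorb the $\eta_k\|m^k-\nabla f(x^k)\|$ cross term; telescoping over $k=0,\dots,K-1$ and dividing by $\sum_{k<K}\eta_k = \Theta(K^{1-(2\alpha-1)/(p(2\alpha-1)+\alpha-1)}\cdot\text{stuff})$ — more precisely $\sum_{k<K}\eta_k \asymp K^{p(\alpha-1)/(p(2\alpha-1)+\alpha-1)}$ given \eqref{def:eta-known-rate-mem} — yields $\min_k \E\|\nabla f(x^k)\| \le \E\|\nabla f(x^{\iota_K})\|$ bounded by $M_{p,\alpha}$ times $\ln K / K^{p(\alpha-1)/(p(2\alpha-1)+\alpha-1)}$. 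Setting this $\le\epsilon$ and solving for $K$ gives exactly the stated threshold (the $\ln(\cdot)$ inside accounts for the logarithmic overshoot needed to dominate the $\ln K$ factor, via the standard fact that $K\ge (c\ln c)$ implies $K\ge c\ln K$).

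The main obstacle I anticipate is the bias bookkeeping: verifying that the Vandermonde weights in \eqref{def:theta-known-rate-mem} genuinely annihilate Taylor terms up to order $p-1$, and then tracking how the residuals $\cR_p$ accumulate through the momentum recursion while the step sizes $\eta_k$ and extrapolation radii $\eta_k/\gamma_k$ both decay — getting the exponents in \eqref{def:eta-known-rate-mem} and \eqref{def:gmak-mem-know} to balance the bias decay $(\eta/\gamma)^p$, the noise decay $\gamma^{(\alpha-1)/\alpha}$, the smoothness lag $\sum\eta$, and the $\sum\eta_k$ normalization simultaneously is the delicate part. Everything else (Bahr--Esseen, the normalized-descent inequality, the telescoping, and the final $K\ge c\ln c$ manipulation) is by now routine in this literature and parallels the Polyak-momentum case.
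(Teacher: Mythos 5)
Your outline identifies the right ingredients (Vandermonde cancellation of the Taylor terms, the residual bound $\|\Rp(y,x)\|\le L_p\|y-x\|^p/p!$, an $\alpha$-moment martingale inequality, the normalized descent inequality with the $2\eta_k\|m^k-\nabla f(x^k)\|$ cross term, Young absorption, and the $K\ge c\ln c$ conversion), and the final rate $M_{p,\alpha}\ln K/K^{p(\alpha-1)/(p(2\alpha-1)+\alpha-1)}$ matches the paper's. Organizationally you differ from the paper: you propose to unroll the momentum recursion into explicit bias/noise sums with product weights and apply von Bahr--Esseen to the unrolled martingale, whereas the paper never unrolls --- it proves a one-step contraction \eqref{ineq:vr-mem} for $\E\|m^{k+1}-\nabla f(x^{k+1})\|^\alpha$ directly from the exact identity \eqref{idtt-from-high-smth} and the expansion inequality \eqref{open-alpha} (which is the same tool as von Bahr--Esseen in disguise), then folds it into the single potential \eqref{def:pot-pm} with a weight sequence $p_k$ calibrated so that $(1-\sum_t\theta_{k,t})p_{k+1}\le(1-\sum_t\theta_{k,t}/10)p_k$. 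Both routes can work, but the potential route avoids the product-weight bookkeeping entirely.

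There is, however, one concrete error in your accounting that would break the claimed rate if taken literally: the ``lag'' term $\sum_{j<k}w_{k,j}\sum_{i=j}^{k-1}\|\nabla f(x^{i+1})-\nabla f(x^i)\|=O\big(\sum_j w_{k,j}\sum_i\eta_i\big)$ ``coming from $L_1$-smoothness.'' That term is exactly the first-order gradient drift of plain Polyak momentum, and it is precisely what the multi-extrapolation is designed to annihilate: by \eqref{idtt-from-high-smth}, $\nabla f(x^{k+1})$ equals the momentum-weighted combination $(1-\sum_t\theta_{k,t})\nabla f(x^k)+\sum_t\theta_{k,t}\nabla f(z^{k+1,t})$ \emph{exactly}, up to the $p$th-order residuals $\Rp(x^{k+1},x^k)$ and $\Rp(z^{k+1,t},x^k)$; no $L_1$-order drift survives. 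If the lag term were present at stationary level $L_1\eta_k/\gamma_k$, then with the schedules \eqref{def:eta-known-rate-mem} and \eqref{def:gmak-mem-know} its contribution to $\E\|\nabla f(x^{\iota_K})\|$ after telescoping and dividing by $K\eta_{K-1}$ is of order $K^{-(\alpha-1)/(p(2\alpha-1)+\alpha-1)}$, which for $p\ge2$ strictly dominates the target $K^{-p(\alpha-1)/(p(2\alpha-1)+\alpha-1)}$, so the stated threshold on $K$ would not yield $\E[\|\nabla f(x^{\iota_K})\|]\le\epsilon$. You do state elsewhere that the per-step bias is $O((\eta/\gamma)^pL_p/p!)$, which is the correct (and only) deterministic error; the fix is simply to delete the $L_1$ lag term and let the Taylor residuals carry the entire bias, as in the paper's Lemma establishing \eqref{ineq:vr-mem} and the subsequent potential descent \eqref{stat-bd-mem}.
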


The next theorem establishes a complexity bound for Algorithm \ref{alg:unf-sfom} to compute an $\epsilon$-SSP of problem \eqref{ucpb} without requiring prior knowledge of the tail exponent $\alpha$. Its proof is deferred to Subsection~\ref{subsec:pf-mem}.

\begin{theorem}[{{\bf complexity with unknown $\alpha$}}]\label{thm:unknown-rate-mem}
Suppose that Assumptions \ref{asp:basic} and \ref{asp:high-smooth} hold. Let $f_{\mathrm{low}}$, $L_1$, $\sigma$, $\alpha$, $p$ and $L_p$ be given in Assumptions \ref{asp:basic} and \ref{asp:high-smooth}, and define
\begin{align}
&\widetilde{M}_{p,\alpha}:= 4(f(x^0) - f_{\mathrm{low}} + 4^{1/3}\sigma^\alpha + L_1/2 + 306 p^{2p\alpha} L_p^\alpha/(p!)^\alpha),\label{def:t-Mpa-unk}\\
&\widehat{M}_{p,\alpha}:= 8(30^{1/(\alpha-1)}(\alpha-1)(2/\alpha)^{\alpha/(\alpha-1)}+ 64(p-1)^\alpha\sigma^\alpha).\label{def:h-Mpa-unk}
\end{align}
 Let $\{x^k\}$ be generated by Algorithm \ref{alg:unf-sfom} with input parameters $q=p-1$, $\{\eta_k\}$, and $\{(\gamma_{k,t},\theta_{k,t})\}$ given by 
\begin{align}
&\eta_k=\frac{1}{(k+4)^{(2p+1)/(3p+1)}}\qquad\forall k\ge0,\label{def:eta-unknown-rate-mem}\\
&\gamma_{k,t}=\frac{\gamma_k}{t^2},\quad \theta_{k,t}=\frac{\prod_{1\le s\le p-1,s\neq t}(1-s^2/\gamma_k)}{(t^2/\gamma_k)\prod_{1\le s\le p-1,s\neq t}((t^2-s^2)/\gamma_k)}\qquad\forall 1\le t\le p-1,k\ge0,\label{def:theta-unknown-rate-mem}
\end{align}
where
\begin{align}\label{def:gmak-mem-unknow}
\gamma_k=\frac{1}{(k+4)^{2p/(3p+1)}}\qquad\forall k\ge0.    
\end{align}
Then, for any $\epsilon \in (0,1)$, it holds that $\mathbb{E}[\|\nabla f(x^{\iota_K})\|]\le \epsilon$ for all $K$ satisfying
\begin{align*}
&K\ge \max\Bigg\{\Big(\frac{4(3p+1)\widetilde{M}_{p,\alpha}}{p\epsilon}\ln\Big(\frac{4(3p+1)\widetilde{M}_{p,\alpha}}{p\epsilon}\Big)\Big)^{(3p+1)/p},\\
&\qquad\qquad\quad \Big(\frac{2(3p\alpha+\alpha)\widehat{M}_{p,\alpha}}{p(\alpha-1)\epsilon}\ln\Big(\frac{2(3p\alpha+\alpha)\widehat{M}_{p,\alpha}}{p(\alpha-1)\epsilon}\Big)\Big)^{(3p\alpha+\alpha)/(2p(\alpha-1))},5\Bigg\},
\end{align*}
where $\iota_K$ is uniformly drawn from $\{0,\ldots,K-1\}$. 
\end{theorem}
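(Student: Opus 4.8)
The plan is to mirror the analysis of Theorem~\ref{thm:known-rate-mem}, but with the parameter schedule \eqref{def:eta-unknown-rate-mem}--\eqref{def:gmak-mem-unknow} that is chosen independently of $\alpha$, and to track the $\alpha$-dependence through two separate potential-type quantities corresponding to $\widetilde{M}_{p,\alpha}$ and $\widehat{M}_{p,\alpha}$. First I would define the error sequence $e^k := m^k - \nabla f(x^k)$ and, exactly as in the known-$\alpha$ case, decompose $e^k$ via the recursion induced by \eqref{update-mk}: the noise part $\sum_{t}\theta_{k-1,t}(G(z^{k,t};\xi^k)-\nabla f(z^{k,t}))$ averaged with the momentum weight, plus a bias part coming from the discrepancy $\nabla f(x^k) - \sum_t \theta_{k-1,t}\nabla f(z^{k,t})$ together with the movement $\nabla f(x^k)-\nabla f(x^{k-1})$. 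The point of the multi-extrapolation with weights \eqref{def:theta-unknown-rate-mem} is that the $z^{k,t}$ are chosen so that $\sum_t \theta_{k-1,t}\nabla f(z^{k,t})$ matches $\nabla f$ to order $p-1$ along the segment $x^{k-1}\to x^k$, so that the bias term is controlled by the $p$th-order Taylor residual $\Rp$ from \eqref{def:taylor-res}, hence by $L_p\|x^k-x^{k-1}\|^p = L_p\eta_{k-1}^p$; this is where Assumption~\ref{asp:high-smooth} and the factor $306\,p^{2p\alpha}L_p^\alpha/(p!)^\alpha$ in $\widetilde{M}_{p,\alpha}$ enter.

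Next I would set up the key potential. Following the remark after Theorem~\ref{thm:un-s-rate-pm}, the natural object is a sequence of the form (schematically)
\begin{align*}
\Phi_k := f(x^k) + c_k \|e^k\|^{\alpha} \quad\text{or a rescaled variant,}
\end{align*}
and the goal is to show a one-step inequality
\begin{align*}
\E[\Phi_{k+1}] \le \E[\Phi_k] - \tfrac{1}{2}\eta_k\,\E[\|\nabla f(x^k)\|] + (\text{summable bias/noise terms}).
\end{align*}
The descent in $f$ is obtained from \eqref{ineq:1st-desc}: since $x^{k+1}=x^k-\eta_k m^k/\|m^k\|$, one has $f(x^{k+1}) \le f(x^k) - \eta_k\|m^k\| + 2\eta_k\|e^k\| + \tfrac{L_1}{2}\eta_k^2$, and $\|m^k\|\ge \|\nabla f(x^k)\| - \|e^k\|$, so $f(x^{k+1}) \le f(x^k) - \eta_k\|\nabla f(x^k)\| + 3\eta_k\|e^k\| + \tfrac{L_1}{2}\eta_k^2$. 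The term $3\eta_k\|e^k\|$ must be absorbed; to do this one bounds $\E[\|e^k\|]\le (\E[\|e^k\|^\alpha])^{1/\alpha}$ and uses the contraction of $\|e^k\|^\alpha$ coming from the momentum averaging with coefficient $\Theta_{k-1}:=\sum_t\theta_{k-1,t}\asymp\gamma_{k-1}$. The heavy-tailed noise contributes, after taking conditional expectation and using a bound of the type $\E[\|\sum_t \theta_{k-1,t}\zeta^{k,t}\|^\alpha]\lesssim (\sum_t\theta_{k-1,t})^\alpha\sigma^\alpha$ with $\zeta^{k,t}$ the centered noises (here the constant $64(p-1)^\alpha$ and the $30^{1/(\alpha-1)}(\alpha-1)(2/\alpha)^{\alpha/(\alpha-1)}$ from Young's inequality in $\widehat{M}_{p,\alpha}$ appear), a term $\asymp \Theta_{k-1}^{\alpha}\sigma^\alpha$. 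Summing the one-step inequality from $k=0$ to $K-1$, telescoping $\Phi_k$, and dividing by $\sum_{k=0}^{K-1}\eta_k$ yields
\begin{align*}
\E[\|\nabla f(x^{\iota_K})\|] \;\le\; \frac{\widetilde{M}_{p,\alpha}\,\big(1+\sum_k \eta_k^p + \sum_k \eta_k^2 + \dots\big) + \widehat{M}_{p,\alpha}\sum_k \Theta_{k-1}^\alpha/(\dots)}{\sum_{k=0}^{K-1}\eta_k}.
\end{align*}
The final step is the arithmetic of the exponents: with $\eta_k = (k+4)^{-(2p+1)/(3p+1)}$ one has $\sum_{k=0}^{K-1}\eta_k \asymp K^{p/(3p+1)}$, while the $\alpha$-independent error terms (Taylor residual $\sum\eta_{k-1}^p/\Theta_{k-1}$, smoothness $\sum\eta_k^2/\Theta_{k-1}$, movement $\sum\eta_{k-1}/\Theta_{k-1}\cdot(\dots)$, etc.) are all engineered by the choice $\gamma_k=(k+4)^{-2p/(3p+1)}$ to be $\widetilde{\mathcal O}(1)$, giving the first term $\widetilde{M}_{p,\alpha}K^{-p/(3p+1)}$, whereas the noise term $\sum_k \Theta_{k-1}^\alpha$ behaves like $\sum_k (k+4)^{-2p\alpha/(3p+1)}$ which, after dividing by $\sum\eta_k$, contributes $\widehat{M}_{p,\alpha}\,\widetilde{\mathcal O}(K^{-(2p\alpha-p+\dots)/(\dots)})$; balancing against the two target rates gives exactly the two exponents $(3p+1)/p$ and $(3p\alpha+\alpha)/(2p(\alpha-1))$ in the statement, and the $\ln(\cdot)$ factors come from resolving a bound of the form $K \ge C\ln K$ (because $\|e^k\|$'s contraction loses a harmonic-sum logarithm, as in Theorems~\ref{thm:s-rate-pm} and~\ref{thm:un-s-rate-pm}).

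The main obstacle, I expect, is the construction and verification of the potential sequence together with the contraction estimate for $\E[\|e^k\|^\alpha]$ under the $\alpha$-independent schedule: one must show that the momentum recursion with time-varying weights $\Theta_{k-1}$ yields $\E[\|e^k\|^\alpha] \lesssim \Theta_{k-1}^{\alpha-1}\sigma^\alpha + (\text{accumulated bias})$, and simultaneously that the bias accumulated over the trajectory telescopes against the $f$-descent. Because the schedule does not depend on $\alpha$, the same $\gamma_k$ must make the $\alpha$-free terms summable for \emph{all} $\alpha\in(1,2]$ simultaneously, and the delicate part is checking that the exponent $2p/(3p+1)$ for $\gamma_k$ indeed does this uniformly while still leaving room for the noise term to decay — this is the trade-off that forces the two-regime $\max\{\cdot,\cdot\}$ form of the complexity bound. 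The rest (Taylor expansion bookkeeping for the extrapolation identity, Young's and Jensen's inequalities for the heavy-tailed moment, and the final exponent arithmetic) is routine given the known-$\alpha$ proof in Section~\ref{subsec:pf-mem}.
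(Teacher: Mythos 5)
Your outline follows essentially the same route as the paper: the error recursion for $m^k-\nabla f(x^k)$ driven by the extrapolation identity and the Taylor residual $\Rp$ (Lemma~\ref{lem:grad-desc-near-high}), a potential of the form $f(x^k)+p_k\|m^k-\nabla f(x^k)\|^\alpha$, the normalized-descent lemma, summation, and Lemma~\ref{lem:rate-complexity} to convert the two rates into the two-regime complexity bound. The ingredients and the overall architecture are right. However, two points that you leave open are precisely where the work of this theorem lies, and as written your sketch would not close.

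First, the potential weight. You write $\Phi_k=f(x^k)+c_k\|e^k\|^\alpha$ ``or a rescaled variant'' but never specify $c_k$, and your final noise accounting uses $\sum_k\Theta_{k-1}^\alpha$ with $\Theta_{k-1}\asymp\gamma_{k-1}$, i.e.\ the unweighted sum. That sum, divided by $\sum_k\eta_k\asymp K^{p/(3p+1)}$, gives the exponent $(2p\alpha-2p-1)/(3p+1)$, which is \emph{not} $2p(\alpha-1)/(3p\alpha+\alpha)$; the target rate only emerges after the noise term is weighted by $p_k$ with the specific $\alpha$-dependent choice $p_k=(k+4)^{(2p(\alpha-1)^2-\alpha)/(3p\alpha+\alpha)}$ (the paper's \eqref{def:pk-unknown}). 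This is the crux of the unknown-$\alpha$ result: the algorithm's parameters are $\alpha$-free, but the analysis still requires an $\alpha$-dependent weight, and one must verify the compatibility condition $(1-\sum_t\theta_{k,t})p_{k+1}\le(1-\sum_t\theta_{k,t}/10)p_k$ (Lemma~\ref{lem:pk-mem-unk}), which hinges on $\gamma_k$ decaying strictly slower than $1/k$ and on $(2p(\alpha-1)^2-\alpha)/(3p\alpha+\alpha)\le 1/3$. Without exhibiting this $p_k$ and checking that condition, the telescoping of the potential is not justified and the exponent $(3p\alpha+\alpha)/(2p(\alpha-1))$ cannot be derived. Second, you hedge between absorbing the cross term $\eta_k\|e^k\|$ via Jensen, $\E[\|e^k\|]\le(\E[\|e^k\|^\alpha])^{1/\alpha}$, and via Young's inequality. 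Only the Young route works with this potential: it splits $2\eta_k\|e^k\|$ into $\tfrac{1}{10}p_k(\sum_t\theta_{k,t})\|e^k\|^\alpha$, which is exactly the slack left by the contraction factor $(1-\sum_t\theta_{k,t}/10)$ versus $(1-\sum_t\theta_{k,t})$, plus the $30^{1/(\alpha-1)}(\alpha-1)(2\eta_k/\alpha)^{\alpha/(\alpha-1)}(p_k\gamma_k)^{-1/(\alpha-1)}$ term feeding $\widehat{M}_{p,\alpha}$. The Jensen route does not telescope against $\mathcal{P}_k$ and would require a separate, structurally different argument to control $\sum_k\eta_k(\E[\|e^k\|^\alpha])^{1/\alpha}$.
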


\begin{remark}
(i) To achieve acceleration by leveraging the higher-order smoothness of $f$, the extrapolation parameters ${\gamma_{k,t}}$ and the momentum parameters ${\theta_{k,t}}$ must satisfy the following conditions:
\begin{align}
& \begin{bmatrix}
1/\gamma_{k,1} & 1/\gamma_{k,2} & \cdots & 1/\gamma_{k,q}\\ 
1/\gamma_{k,1}^2 & 1/\gamma_{k,2}^2 & \cdots & 1/\gamma_{k,q}^2\\ 
\vdots&\vdots&\ddots & \vdots\\
1/\gamma_{k,1}^q & 1/\gamma_{k,2}^q & \cdots & 1/\gamma_{k,q}^q
\end{bmatrix}\begin{bmatrix}
\theta_{k,1}\\
\theta_{k,2}\\
\vdots\\
\theta_{k,q}
\end{bmatrix}  = \begin{bmatrix}
1\\
1\\
\vdots\\
1
\end{bmatrix}\qquad\forall k\ge0,\label{pth-lss} \\
&\sum_{t=1}^q \theta_{k,t} \in(0,1)\qquad\forall k\ge0, \label{sum-tht-bd} 
\end{align}
where $q=p-1$. Note that the coefficient matrix in \eqref{pth-lss} is a Vandermonde matrix (see, e.g., \cite{horn2012matrix,humpherys2020foundations}). As will be proven in Lemma \ref{lem:ppt-thetakt} in Subsection \ref{subsec:pf-mem}, when choosing $\gamma_{k,t} = \gamma_k/t^2$, the resulting ${\theta_{k,t}}$ that satisfy \eqref{pth-lss} takes the form given in \eqref{def:theta-known-rate-mem} or \eqref{def:theta-unknown-rate-mem}.

(ii) From Theorems~\ref{thm:known-rate-mem} and~\ref{thm:unknown-rate-mem}, we observe that under Assumptions~\ref{asp:basic} and~\ref{asp:high-smooth}, Algorithm~\ref{alg:unf-sfom} achieves a first-order oracle complexity of $\widetilde{\mathcal{O}}(\epsilon^{-(p(2\alpha - 1) + \alpha - 1)/(p(\alpha - 1))})$ for finding an $\epsilon$-SSP of~\eqref{ucpb} when the tail exponent $\alpha$ is known, and $\widetilde{\mathcal{O}}(\epsilon^{-(3p\alpha + \alpha)/(2p(\alpha - 1))})$ when $\alpha$ is unknown. For $p = 2$, the complexity result for known $\alpha$ matches, up to a logarithmic factor, the bound established in~\cite{cutkosky2021high}, while the complexity result for unknown $\alpha$ is new.
Moreover, for $p \ge 3$, our results are entirely new and provide significantly improved complexities over the case $p = 2$.
\end{remark}

\subsection{A normalized SFOM with recursive momentum}\label{subsec:nsgd-rm}

In this subsection, we propose a practical normalized SFOM with recursive momentum for solving problem \eqref{ucpb} and establish its first-order oracle complexity for finding an $\epsilon$-SSP of \eqref{ucpb} under heavy-tailed noise.



Specifically, our practical normalized SFOM with recursive momentum generates two sequences, $\{m^k\}$ and $\{x^k\}$. At each iteration $k \geq 0$, the direction $m^k$ is computed as a weighted average of stochastic gradients of $f$ evaluated at the iterates $x^0, \ldots, x^k$. The next iterate $x^{k+1}$ is obtained by performing a line search update from $x^k$ along the normalized direction $-m^k/\|m^k\|$, using a suitable step size. The detailed procedure is presented in Algorithm \ref{alg:unf-sfom-rm}, with the specific momentum weights and step sizes defined in Theorems \ref{thm:known-rate-rm} and \ref{thm:unknown-rate-rm}.

\begin{algorithm}[!htbp]
\caption{A normalized SFOM with recursive momentum}
\label{alg:unf-sfom-rm}
\begin{algorithmic}[0]
\State \textbf{Input:} starting point $x^0\in\R^n$, step sizes $\{\eta_k\}\subset(0,+\infty)$, weighting parameters $\{\theta_k\}\subset(0,1]$.
\State \textbf{Initialize:} $x^{-1}=x^0$, $m^{-1}=0$, and $\theta_{-1}=1$.
\For{$k=0,1,2,\ldots$}
\State Compute the search direction:\footnotemark{}
\begin{align}\label{update-mk-rm}
m^k = (1 - \theta_{k-1}) m^{k-1} + G(x^k;\xi^{k}) - (1-\theta_{k-1})G(x^{k-1};\xi^{k}).
\end{align}
\State Update the next iterate:
\begin{align*}
x^{k+1} = x^k - \eta_k\frac{m^k}{\|m^k\|}. 
\end{align*}
\EndFor
\end{algorithmic}
\end{algorithm}

\footnotetext{$\{\xi^k\}$ is a sequence of independently drawn samples.}

It shall be mentioned that Algorithm~\ref{alg:unf-sfom-rm} adopts the recursive momentum technique developed in STORM \cite{cutkosky2019momentum}, but they differ in both the search direction and the choice of algorithmic parameters. Specifically, this method employs a normalized search direction, whereas STORM uses the unnormalized direction. Moreover, STORM relies on static parameters that requires explicit knowledge of the Lipschitz constant and the noise bound, while Algorithm~\ref{alg:unf-sfom-rm} uses dynamically updated parameters that do not rely on such problem-specific quantities.

Before analyzing the complexity of Algorithm~\ref{alg:unf-sfom-rm} for computing an approximate solution to problem~\eqref{ucpb}, we introduce an additional assumption regarding the {\it weakly average smoothness} of the stochastic gradient estimator $G(\cdot;\xi)$.

\begin{assumption}\label{asp:gen-ave-smth}
There exists some $L>0$ such that $\mathbb{E}[\|G(x;\xi) - G(y;\xi)\|^\alpha]\le L^\alpha\|x-y\|^\alpha$ holds for all $x,y\in\mathbb{R}^n$, where $\alpha\in(1,2]$ is given in Assumption \ref{asp:basic}(c).
\end{assumption}

\begin{remark}
(i) When $\alpha = 2$, Assumption~\ref{asp:gen-ave-smth} reduces to the standard mean-squared smoothness condition commonly used in the literature (see, e.g., \cite{arjevani2023lower,fang2018spider,li2021page}). For $\alpha \in (1, 2)$, it is strictly weaker than the mean-squared smoothness assumption, thereby holding for a broader class of stochastic gradient estimators $G(\cdot; \xi)$. In addition, if $G(\cdot; \xi)$ is almost surely Lipschitz continuous for all $\xi \in \Xi$ with a uniform Lipschitz constant $L$, one can verify that it satisfies Assumption~\ref{asp:gen-ave-smth}. Consequently, Assumption~\ref{asp:gen-ave-smth} is strictly weaker than the almost sure Lipschitz condition stated in~\eqref{as-Lip}, which is adopted in~\cite{cutkosky2019momentum,liu2023breaking,sun2024gradient}.

(ii) It is reasonable to assume that the exponent $\alpha$ in Assumption~\ref{asp:gen-ave-smth} is the same as that in Assumption~\ref{asp:basic}(c). Indeed, if Assumptions~\ref{asp:basic}(c) and~\ref{asp:gen-ave-smth} hold with different exponents $\alpha_1, \alpha_2 \in (1,2]$, then both assumptions also hold with $\alpha = \min\{\alpha_1, \alpha_2\}$.
\end{remark}

The following theorem establishes a complexity bound for Algorithm~\ref{alg:unf-sfom-rm} to compute an $\epsilon$-SSP of problem \eqref{ucpb} under the assumption that the tail exponent $\alpha$ is known. Its proof is deferred to Subsection \ref{subsec:pf-rm}.

\begin{theorem}[{{\bf complexity with known $\alpha$}}]\label{thm:known-rate-rm}
Suppose that Assumptions \ref{asp:basic} and \ref{asp:gen-ave-smth} hold. Let $f_{\mathrm{low}}$, $L_1$, $\sigma$, $\alpha$, and $L$ be given in Assumption \ref{asp:basic} and \ref{asp:gen-ave-smth}, and define
\begin{align}\label{Ma-rm}
M_\alpha := 2(f(x^0) - f_{\mathrm{low}}+\sigma^\alpha+ L_1/2 + 2^{1/(\alpha-1)}(\alpha-1)(2/\alpha)^{\alpha/(\alpha-1)} + 12(L_1^\alpha+L^\alpha) + 12\sigma^\alpha).
\end{align}
 Let $\{x^k\}$ be generated by Algorithm \ref{alg:unf-sfom-rm} with input parameters $\{(\eta_k,\theta_k)\}$ given by
\begin{align}\label{rm-eta-theta}
\eta_k= \theta_k=\frac{1}{(k+1)^{\alpha/(2\alpha-1)}}\qquad\forall k\ge0.    
\end{align}
Then, for any $\epsilon \in (0,1)$, it holds that $\mathbb{E}[\|\nabla f(x^{\iota_K})\|]\le \epsilon$ for all $K$ satisfying
\begin{align*}
\forall K\ge\max\Big\{\Big(\frac{2(2\alpha-1)M_{\alpha}}{(\alpha-1)\epsilon}\ln\Big(\frac{2(2\alpha-1)M_{\alpha}}{(\alpha-1)\epsilon}\Big)\Big)^{(2\alpha-1)/(\alpha-1)},3\Big\},     
\end{align*}   
where $\iota_K$ is uniformly drawn from $\{0,\ldots,K-1\}$.
\end{theorem}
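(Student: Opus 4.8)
The plan is to track the momentum error $\epsilon_k:=m^k-\nabla f(x^k)$, derive a contraction recursion for $\mathbb{E}[\|\epsilon_k\|^\alpha]$ fed by the (genuinely small) recursive-momentum noise, pair it with a normalized-step descent estimate for $f$, glue the two together through a potential sequence, and finally telescope and invert.

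\emph{Error recursion.} From \eqref{update-mk-rm} one writes $\epsilon_k=(1-\theta_{k-1})\epsilon_{k-1}+W_k$ with
\[
W_k:=\theta_{k-1}\bigl(G(x^k;\xi^k)-\nabla f(x^k)\bigr)+(1-\theta_{k-1})\Bigl[\bigl(G(x^k;\xi^k)-G(x^{k-1};\xi^k)\bigr)-\bigl(\nabla f(x^k)-\nabla f(x^{k-1})\bigr)\Bigr].
\]
Since $\xi^k$ is independent of $\mathcal F_{k-1}$ (which makes $x^k$, $x^{k-1}$, $\epsilon_{k-1}$ measurable), Assumption \ref{asp:basic}(c) gives $\mathbb{E}[W_k\mid\mathcal F_{k-1}]=0$; using Assumption \ref{asp:basic}(c) on the first term, Assumptions \ref{asp:basic}(b) and \ref{asp:gen-ave-smth} on the second term together with $\|x^k-x^{k-1}\|=\eta_{k-1}$ (from the normalization), and $\|a+b\|^\alpha\le 2^{\alpha-1}(\|a\|^\alpha+\|b\|^\alpha)$, one gets $\mathbb{E}[\|W_k\|^\alpha\mid\mathcal F_{k-1}]\le C_1\bigl(\theta_{k-1}^\alpha\sigma^\alpha+(L_1^\alpha+L^\alpha)\eta_{k-1}^\alpha\bigr)$ — note the fresh-noise term carries $\theta_{k-1}^\alpha$, which is exactly the gain over Polyak momentum, where the analogous bias term would be only $O(\eta_{k-1})$. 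Applying the Hilbert-space inequality $\mathbb{E}[\|x+W\|^\alpha\mid\mathcal F_{k-1}]\le\|x\|^\alpha+c_\alpha\,\mathbb{E}[\|W\|^\alpha\mid\mathcal F_{k-1}]$ (valid for conditionally mean-zero $W$, $\alpha\in(1,2]$) with $x=(1-\theta_{k-1})\epsilon_{k-1}$, and $(1-\theta_{k-1})^\alpha\le 1-\theta_{k-1}$, yields
\[
\mathbb{E}[\|\epsilon_k\|^\alpha\mid\mathcal F_{k-1}]\le(1-\theta_{k-1})\|\epsilon_{k-1}\|^\alpha+C_2(\sigma^\alpha+L_1^\alpha+L^\alpha)\eta_{k-1}^\alpha.
\]

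\emph{Descent and potential.} From \eqref{ineq:1st-desc} and $x^{k+1}-x^k=-\eta_k m^k/\|m^k\|$, together with $\langle\nabla f(x^k),m^k/\|m^k\|\rangle=\|m^k\|-\langle\epsilon_k,m^k/\|m^k\|\rangle\ge\|\nabla f(x^k)\|-2\|\epsilon_k\|$, one obtains $f(x^{k+1})\le f(x^k)-\eta_k\|\nabla f(x^k)\|+2\eta_k\|\epsilon_k\|+\tfrac{L_1}{2}\eta_k^2$ (the degenerate case $m^k=0$ is trivial). I then form the potential $\Phi_k:=f(x^k)+a_k\|\epsilon_k\|^\alpha$ for an increasing weight $a_k\asymp (k+c)^{(\alpha-1)^2/(2\alpha-1)}$, and absorb the troublesome cross-term via Young's inequality, $2\eta_k\|\epsilon_k\|\le\tfrac{\rho_k}{\alpha}\|\epsilon_k\|^\alpha+\tfrac{\alpha-1}{\alpha}(2\eta_k)^{\alpha/(\alpha-1)}\rho_k^{-1/(\alpha-1)}$, with $\rho_k\propto a_k\theta_{k-1}$ chosen so that $\tfrac{\rho_k}{\alpha}\|\epsilon_k\|^\alpha$ is paid for by the ``budget'' $a_k\|\epsilon_{k-1}\|^\alpha\rightsquigarrow a_k(1-\theta_{k-1})\|\epsilon_{k-1}\|^\alpha$ released by the $\|\epsilon\|^\alpha$-recursion. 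Combining the two inequalities gives a one-step estimate $\mathbb{E}[\Phi_{k+1}]\le\mathbb{E}[\Phi_k]-\eta_k\,\mathbb{E}\|\nabla f(x^k)\|+e_k$, where $e_k$ collects $\tfrac{L_1}{2}\eta_k^2$, the Young remainder $\propto(2\eta_k)^{\alpha/(\alpha-1)}(a_k\theta_{k-1})^{-1/(\alpha-1)}$, and $a_{k+1}$ times the $\|W_{k+1}\|^\alpha$-bound; this is the analogue, for recursive momentum, of the potential sequence used in the Polyak-momentum analysis.

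\emph{Telescoping, inversion, and the main obstacle.} Summing over $k=0,\dots,K-1$ and using $\Phi_K\ge f_{\mathrm{low}}$ gives $\sum_{k=0}^{K-1}\eta_k\,\mathbb{E}\|\nabla f(x^k)\|\le\bigl(f(x^0)-f_{\mathrm{low}}+a_0\sigma^\alpha\bigr)+\sum_{k=0}^{K-1}e_k$. With $\eta_k=\theta_k=(k+1)^{-\alpha/(2\alpha-1)}$ from \eqref{rm-eta-theta}, the exponents on $\eta_k,\theta_k$ and on $a_k$ are tuned precisely so that the residual series $\sum_k(2\eta_k)^{\alpha/(\alpha-1)}(a_k\theta_{k-1})^{-1/(\alpha-1)}$ and $\sum_k a_{k+1}(\sigma^\alpha+L_1^\alpha+L^\alpha)\eta_k^\alpha$ both collapse to $\sum_k k^{-1}=O(\ln K)$ (while $\sum_k\eta_k^2=O(1)$); collecting constants reproduces the quantity $M_\alpha$ of \eqref{Ma-rm} (the Young remainders yield the $2^{1/(\alpha-1)}(\alpha-1)(2/\alpha)^{\alpha/(\alpha-1)}$ term and the accumulated noise the $12(L_1^\alpha+L^\alpha)+12\sigma^\alpha$), so that $\sum_{k=0}^{K-1}\eta_k\,\mathbb{E}\|\nabla f(x^k)\|\le M_\alpha\ln K$ for $K\ge 3$. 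Since $\{\eta_k\}$ is non-increasing, $\eta_{K-1}\sum_{k=0}^{K-1}\mathbb{E}\|\nabla f(x^k)\|$ is bounded by the same quantity, and since $K\eta_{K-1}=K^{(\alpha-1)/(2\alpha-1)}$ we obtain $\mathbb{E}\|\nabla f(x^{\iota_K})\|=\tfrac1K\sum_{k=0}^{K-1}\mathbb{E}\|\nabla f(x^k)\|\le M_\alpha\ln K\,/\,K^{(\alpha-1)/(2\alpha-1)}$, which the standard lemma inverting $t^{-(\alpha-1)/(2\alpha-1)}\ln t\le\epsilon$ turns into the stated threshold on $K$ (together with $K\ge 3$ to validate the logarithmic manipulations). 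The main obstacle is the construction of the potential in the previous step: one must choose $a_k$ and $\rho_k\propto a_k\theta_{k-1}$ so that the contraction released by the $\|\epsilon\|^\alpha$-recursion exactly covers the Young cross-term $\tfrac{\rho_k}{\alpha}\|\epsilon_k\|^\alpha$ \emph{even though $a_k$ must grow} — this forces $a_k\asymp(k+c)^{(\alpha-1)^2/(2\alpha-1)}$ — while simultaneously keeping every residual error series at the harmonic rate, which forces the exponent $\alpha/(2\alpha-1)$ on $\eta_k,\theta_k$; these two families of exponents are pinned down by requiring several a priori unrelated power sums to all behave like $\sum 1/k$. Everything else (the error recursion, the normalized-descent estimate, the telescoping, the switch from the $\eta_k$-weighted to the uniform average, and the transcendental inversion) is routine.
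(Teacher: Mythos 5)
Your proposal is correct and follows essentially the same route as the paper: the same martingale-difference decomposition of the recursive-momentum error (the paper's \eqref{rm-idt-vr}), the same $\alpha$-power contraction \eqref{ineq:vr-rm}, the same normalized-descent estimate (Lemma \ref{lem:ppt-desc}), the same potential $f(x^k)+p_k\|m^k-\nabla f(x^k)\|^\alpha$ with the growing weight $p_k=(k+1)^{(\alpha-1)^2/(2\alpha-1)}$ and the Young-inequality absorption at level $\theta_k p_k$, and the same telescoping plus inversion via Lemma \ref{lem:rate-complexity}. The only deviations are cosmetic (an index shift in the recursion and slightly different splitting constants in the bound on $\mathbb{E}\|W_k\|^\alpha$, which would perturb the numerical constants in $M_\alpha$ but not the result).
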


The following theorem establishes a complexity bound for Algorithm \ref{alg:unf-sfom-rm} to compute an $\epsilon$-SSP of problem \eqref{ucpb} without requiring prior knowledge of the tail exponent $\alpha$. Its proof is deferred to Subsection \ref{subsec:pf-rm}.

\begin{theorem}[{{\bf complexity with unknown $\alpha$}}]\label{thm:unknown-rate-rm}
Suppose that Assumptions \ref{asp:basic} and \ref{asp:gen-ave-smth} hold. Let $f_{\mathrm{low}}$, $L_1$, $\sigma$, $\alpha$, and $L$ be given in Assumption \ref{asp:basic} and \ref{asp:gen-ave-smth}, and define
\begin{align}
&\widetilde{M}_\alpha := 2(f(x^0) - f_{\mathrm{low}} + \sigma^\alpha + L_1/2),\label{Ma-tilde-rm}\\
&\widehat{M}_\alpha := 2(2^{1/(\alpha-1)}(\alpha-1)(2/\alpha)^{\alpha/(\alpha-1)}+ 12(L_1^\alpha+L^\alpha) + 12\sigma^\alpha).\label{Ma-hat-rm}
\end{align}
 Let $\{x^k\}$ be generated by Algorithm \ref{alg:unf-sfom-rm} with input parameters $\{(\eta_k,\theta_k)\}$ given by
\begin{align}\label{rm-eta-theta-un}
\eta_k= \theta_k=\frac{1}{(k+1)^{2/3}}\qquad\forall k\ge0.    
\end{align}
Then, for any $\epsilon \in (0,1)$, it holds that $\mathbb{E}[\|\nabla f(x^{\iota_K})\|]\le \epsilon$ for all $K$ satisfying
\[
K\ge\max\Big\{\Big(\frac{12\widetilde{M}_\alpha}{\epsilon}\ln\Big(\frac{12\widetilde{M}_{\alpha}}{\epsilon}\Big)\Big)^{3},\Big(\frac{6\alpha\widehat{M}_\alpha}{(\alpha-1)\epsilon}\ln\Big(\frac{6\alpha \widehat{M}_\alpha}{(\alpha-1)\epsilon}\Big)\Big)^{3\alpha/(2(\alpha-1))},3\Big\},
\]
where $\iota_K$ is uniformly drawn from $\{0,\ldots,K-1\}$.    
\end{theorem}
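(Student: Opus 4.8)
The plan is to have the argument closely follow the proof of Theorem~\ref{thm:known-rate-rm}, changing only the $\alpha$-dependent schedule \eqref{rm-eta-theta} to the parameter-free schedule \eqref{rm-eta-theta-un}; note that \eqref{rm-eta-theta-un} is exactly the specialization of \eqref{rm-eta-theta} at $\alpha=2$, that is, the most conservative admissible choice, which is what makes it usable without knowing $\alpha$. The argument has two ingredients: a one-step descent inequality and a recursive bound on the $\alpha$th moment of the momentum error $e^k:=m^k-\nabla f(x^k)$. For the first, since $x^{k+1}=x^k-\eta_k m^k/\|m^k\|$, inequality \eqref{ineq:1st-desc} together with the elementary estimate $\langle\nabla f(x^k),m^k\rangle/\|m^k\|\ge\|\nabla f(x^k)\|-2\|e^k\|$ gives
\begin{equation*}
f(x^{k+1})\le f(x^k)-\eta_k\|\nabla f(x^k)\|+2\eta_k\|e^k\|+\tfrac{L_1}{2}\eta_k^2 .
\end{equation*}
Summing over $k=0,\dots,K-1$, taking expectations, and using $\eta_k\ge\eta_{K-1}=K^{-2/3}$ together with the boundedness of $\sum_k\eta_k^2$, the whole statement reduces to controlling $\sum_{k=0}^{K-1}\eta_k\,\E[\|e^k\|]$, since then $\tfrac1K\sum_{k=0}^{K-1}\E[\|\nabla f(x^k)\|]\le K^{-1/3}\big(f(x^0)-f_{\mathrm{low}}+2\sum_{k=0}^{K-1}\eta_k\,\E[\|e^k\|]+\tfrac{L_1}{2}\sum_{k=0}^{K-1}\eta_k^2\big)$.

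For the second ingredient, the recursive momentum update \eqref{update-mk-rm} yields $e^k=(1-\theta_{k-1})e^{k-1}+\zeta^k$ with
\begin{equation*}
\zeta^k:=\theta_{k-1}\big(G(x^k;\xi^k)-\nabla f(x^k)\big)+(1-\theta_{k-1})\big[\big(G(x^k;\xi^k)-G(x^{k-1};\xi^k)\big)-\big(\nabla f(x^k)-\nabla f(x^{k-1})\big)\big],
\end{equation*}
which satisfies $\E[\zeta^k\mid\mathcal{F}_{k-1}]=0$, where $\mathcal{F}_{k-1}=\sigma(\xi^0,\dots,\xi^{k-1})$. Since the normalized update makes the displacement \emph{exact}, $\|x^k-x^{k-1}\|=\eta_{k-1}$, Assumptions~\ref{asp:basic}(b)--(c) and~\ref{asp:gen-ave-smth} give $\E[\|\zeta^k\|^\alpha\mid\mathcal{F}_{k-1}]\lesssim\theta_{k-1}^\alpha\sigma^\alpha+(1-\theta_{k-1})^\alpha(L_1^\alpha+L^\alpha)\eta_{k-1}^\alpha$. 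Applying conditionally a smoothness-type inequality for $\|\cdot\|^\alpha$ (whose gradient is $(\alpha-1)$-Hölder continuous for $\alpha\in(1,2]$), using $\E[\zeta^k\mid\mathcal{F}_{k-1}]=0$ and $(1-\theta_{k-1})^\alpha\le1-\theta_{k-1}$, and then taking full expectation, I would obtain a recursion $w_k\le(1-\theta_{k-1})w_{k-1}+C\big(\theta_{k-1}^\alpha\sigma^\alpha+(1-\theta_{k-1})^\alpha(L_1^\alpha+L^\alpha)\eta_{k-1}^\alpha\big)$ for $w_k:=\E[\|e^k\|^\alpha]$, with $w_0\le\sigma^\alpha$ because $e^0=G(x^0;\xi^0)-\nabla f(x^0)$.

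With $\eta_k=\theta_k=(k+1)^{-2/3}$ inserted, unrolling this recursion, bounding $\prod_{j=i}^{k-1}(1-\theta_j)\le\exp(-\sum_{j=i}^{k-1}\theta_j)$, and estimating the resulting weighted sum by an integral yields $w_k=\mathcal{O}\big((k+1)^{-2(\alpha-1)/3}\big)$, so $\E[\|e^k\|]\le w_k^{1/\alpha}=\mathcal{O}\big((k+1)^{-2(\alpha-1)/(3\alpha)}\big)$ by Jensen's inequality. Hence $\sum_{k=0}^{K-1}\eta_k\,\E[\|e^k\|]=\mathcal{O}\big(\sum_{k=1}^{K}k^{-(4\alpha-2)/(3\alpha)}\big)$, and since $(4\alpha-2)/(3\alpha)\in(2/3,1]$ the crude bound $k^{-s}\le k^{-1}K^{1-s}$ gives $\sum_{k=1}^{K}k^{-(4\alpha-2)/(3\alpha)}\le K^{(2-\alpha)/(3\alpha)}(1+\ln K)$. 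Substituting into the reduction above and using Young's inequality to split the problem constants into the descent group $\widetilde M_\alpha$ of \eqref{Ma-tilde-rm} and the noise group $\widehat M_\alpha$ of \eqref{Ma-hat-rm} (the term $(\alpha-1)(2/\alpha)^{\alpha/(\alpha-1)}$ being precisely the residual of the Young step), one obtains $\tfrac1K\sum_{k=0}^{K-1}\E[\|\nabla f(x^k)\|]=\mathcal{O}\big(\widetilde M_\alpha K^{-1/3}+\widehat M_\alpha(1+\ln K)K^{-2(\alpha-1)/(3\alpha)}\big)$. Requiring each term to be at most $\epsilon/2$ and solving for $K$ with the standard fact that $K\ge(\tfrac{a}{\epsilon}\ln\tfrac{a}{\epsilon})^p$ implies $a(1+\ln K)K^{-1/p}\le\epsilon$ produces the asserted two-term bound, and the conclusion follows since $\E[\|\nabla f(x^{\iota_K})\|]=\tfrac1K\sum_{k=0}^{K-1}\E[\|\nabla f(x^k)\|]$.

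The step I expect to be the main obstacle is this last one. Because the exponent $2/3$ is fixed rather than matched to $\alpha$, the decay of $w_k$ is suboptimal for $\alpha<2$, and the power series $\sum_k k^{-(4\alpha-2)/(3\alpha)}$ lands exactly on the boundary of summability---genuinely logarithmic at $\alpha=2$---so the accumulated-error estimate has to be carried out in a form that is uniform over $\alpha\in(1,2]$ (rather than one whose constant diverges as $\alpha\uparrow2$), while at the same time every multiplicative constant must be tracked so that the bookkeeping collapses precisely into the prescribed $\widetilde M_\alpha$ and $\widehat M_\alpha$. A subsidiary technical point is establishing the smoothness-type inequality for $\|\cdot\|^\alpha$ with explicit constants, which is what fixes the factor $12$ appearing in \eqref{Ma-hat-rm}.
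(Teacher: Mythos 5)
Your proposal is sound and reaches the theorem's rate, but it combines the two ingredients in a genuinely different way from the paper. You share the same descent lemma (Lemma \ref{lem:ppt-desc}) and essentially the same error recursion (the paper's \eqref{ineq:vr-rm}, with the same mean-zero decomposition of $\zeta^k$ and the same use of the $(\alpha-1)$-H\"older smoothness of $\|\cdot\|^\alpha$, i.e.\ \eqref{open-alpha}--\eqref{open-alpha-2}). Where you diverge is the combination step: you propose to \emph{solve} the recursion for $w_k=\E[\|e^k\|^\alpha]$ by unrolling, pass to $\E[\|e^k\|]\le w_k^{1/\alpha}$ via Jensen, and then sum $\eta_k\E[\|e^k\|]$ directly. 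The paper instead never unrolls: it forms the potential ${\mathcal P}_k=f(x^k)+p_k\|m^k-\nabla f(x^k)\|^\alpha$ with the \emph{increasing} weight $p_k=(k+1)^{2(\alpha-1)^2/(3\alpha)}$ (Lemma \ref{lem:pk-unknow-rm} checks $(1-\theta_k)p_{k+1}\le(1-\theta_k/2)p_k$), absorbs the cross term $2\eta_k\|e^k\|$ at every iteration by Young's inequality against the slack $\theta_kp_k\|e^k\|^\alpha/2$, and telescopes (Lemma \ref{thm:stat-bd-rm}). Both routes handle the boundary-of-summability issue identically (your bound $k^{-s}\le k^{-1}K^{1-s}$ is exactly the paper's $(k+1)^{-2(2\alpha-1)/(3\alpha)}\le K^{(2-\alpha)/(3\alpha)}/(k+1)$), and both give $\widetilde{\mathcal O}(K^{-1/3}+K^{-2(\alpha-1)/(3\alpha)})$. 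What your route does not buy is the theorem's exact constants: the term $2^{1/(\alpha-1)}(\alpha-1)(2/\alpha)^{\alpha/(\alpha-1)}$ in $\widehat M_\alpha$ is the residual of the paper's \emph{per-iteration} Young step against $(\theta_kp_k/2)^{1/(\alpha-1)}$ with that specific $p_k$; your pipeline has no such step (Young after Jensen would be redundant), so your bookkeeping would produce a differently structured $\widehat M_\alpha$, and the unrolled sum $\sum_i b_i\prod_{j>i}(1-\theta_j)$ carries its own absolute constants from the integral comparison. This only changes the constants inside the stated $K$-threshold, not its form, but it means your argument proves the theorem with a modified $\widehat M_\alpha$ rather than literally the one in \eqref{Ma-hat-rm}. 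The trade-off is the usual one: your version is more transparent and avoids having to guess $p_k$, while the paper's Lyapunov version avoids the unrolling estimate and the Jensen loss and lands exactly on the advertised constants.
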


\begin{remark}
From Theorems \ref{thm:known-rate-rm} and \ref{thm:unknown-rate-rm}, we observe that under Assumptions \ref{asp:basic} and \ref{asp:gen-ave-smth}, Algorithm \ref{alg:unf-sfom-rm} achieves a first-order oracle complexity of $\widetilde{\mathcal{O}}(\epsilon^{-(2\alpha-1)/(\alpha-1)})$ for finding an $\epsilon$-SSP of problem \eqref{ucpb} when the tail exponent $\alpha$ is known, and $\widetilde{\mathcal{O}}(\epsilon^{-3\alpha/(2(\alpha-1))})$ when $\alpha$ is unknown. The complexity bound for the known-$\alpha$ case matches, up to logarithmic factors, the best-known results in \cite{liu2023breaking,sun2024gradient}, while the bound for the unknown-$\alpha$ case is, to the best of our knowledge, new to the literature.
Moreover, these are the first complexity results established under the weakly average smoothness condition stated in Assumption \ref{asp:gen-ave-smth}. This condition generalizes and relaxes both the almost sure Lipschitz condition (see \eqref{as-Lip}) used in prior works on SFOMs under heavy-tailed noise \cite{liu2023breaking,sun2024gradient}, and the commonly adopted mean-squared smoothness assumption \cite{cutkosky2019momentum,fang2018spider,li2021page}.
\end{remark}

\section{Numerical experiments}\label{sec:ne}

In this section, we present preliminary numerical experiments to evaluate the performance of Algorithm~\ref{alg:unf-sfom-pm}, Algorithm~\ref{alg:unf-sfom} with $q = 1$, and Algorithm~\ref{alg:unf-sfom-rm}, abbreviated as NSFOM-PM, NSFOM-EM, and NSFOM-RM, respectively. 
We compare these methods with clipped SGD \cite{sadiev2023high, zhang2020adaptive} and the adaptive coordinate-wise clipping algorithm proposed in \cite{zhang2020adaptive}, abbreviated as GClip and ACClip, respectively. The experiments are conducted on three problem classes: a data fitting problem (Subsection~\ref{subsec:df}), a robust regression problem (Subsection~\ref{subsec:rrp}), and a multimodal contrastive learning problem (Subsection~\ref{subsec:mcl}). The first two are run on a standard PC equipped with a 3.20 GHz AMD R7 5800H processor and 16 GB of memory, while the last is executed on a server with an NVIDIA A100 GPU (80 GB). The code for reproducing our numerical results is publicly available at:
\texttt{https://github.com/ChuanH6/SFOM-HT}.

\subsection{Data fitting problem}\label{subsec:df}
In this subsection, we consider the data fitting problem:
\begin{align}\label{df}
\min_{x\in\R^n}  \Big\{f(x) = \sum_{i=1}^m \big(s(a_i^Tx)-b_i\big)^2\Big\},
\end{align}
where $s(t)=e^t/(1+e^t)$ is the sigmoid function, and $\{(a_i,b_i)\}_{1\le i\le n}\subset\R^n\times\R$ denotes the given dataset. We simulate the noisy gradient evaluations by setting the stochastic gradient estimator as $G(x;\xi)=\nabla f(x) + \xi e$, where $e\in\R^n$ is the all-ones vector and $\xi\in\R$ is drawn from a heavy-tailed distribution with density function $p(t)=3/(4(1+|t|)^{5/2})$. One can verify that such $G(\cdot;\xi)$ satisfies Assumption \ref{asp:basic}(c) for every $\alpha\in(1,1.5)$, and that the $\alpha$th central moment of $G(\cdot;\xi)$ is unbounded for all $\alpha\ge1.5$.

For each pair $(n,m)$, we randomly generate $a_i$, $1\le i\le m$, with all entries independently drawn from the standard normal distribution. We also generate a ground truth solution $x^*$ in the same manner and set $b_i=s(a_i^Tx^*)+ 10^{-4}e_i$ for each $1\le i\le m$, where $e_i$'s are independently drawn from the standard normal distribution. 

We apply NSFOM-PM, NSFOM-EM,NSFOM-RM, GClip, and ACClip 
to solve problem \eqref{df}. All methods are initialized at the zero vector. We compare them based on two metrics: the relative objective value gap $(f(x^k)-f^*)/(f(x^0)-f^*)$ and the relative gradient norm $\|\nabla f(x^k)\|/\|\nabla f(x^0)\|$, computed over the first $500$ stochastic gradient evaluations. Here, $f^*$ denotes the minimum objective value found during the first $600$ stochastic gradient evaluations across all methods. 
The algorithmic parameters are tuned according to the following strategy:
\begin{itemize}
\item For NSFOM-PM, NSFOM-EM, and NSFOM-RM, the step sizes and momentum parameters are chosen according to Theorems~\ref{thm:s-rate-pm}, \ref{thm:known-rate-mem}, and~\ref{thm:known-rate-rm}, respectively, with $\alpha = 1.5$. For NSFOM-EM, the extrapolation parameters are set equal to the momentum parameters in accordance with~\eqref{pth-lss} with $q = 1$.

\item  For GClip and ACClip, the step sizes and clipping thresholds are set as $\{(k+1)^{-\beta_1}\}$ and $\{(k+1)^{-\beta_2}\}$, respectively, where $\beta_1 > 0$ and $\beta_2 \in \mathbb{R}$ are tuned to optimize the empirical performance of each method. For ACClip, the momentum parameters are set as $\{(k+1)^{-\beta_3}\}$, where $\beta_3$ is likewise tuned to optimize its empirical performance.
\end{itemize}

\begin{figure}[htbp]
\centering
\includegraphics[width=.9\linewidth]{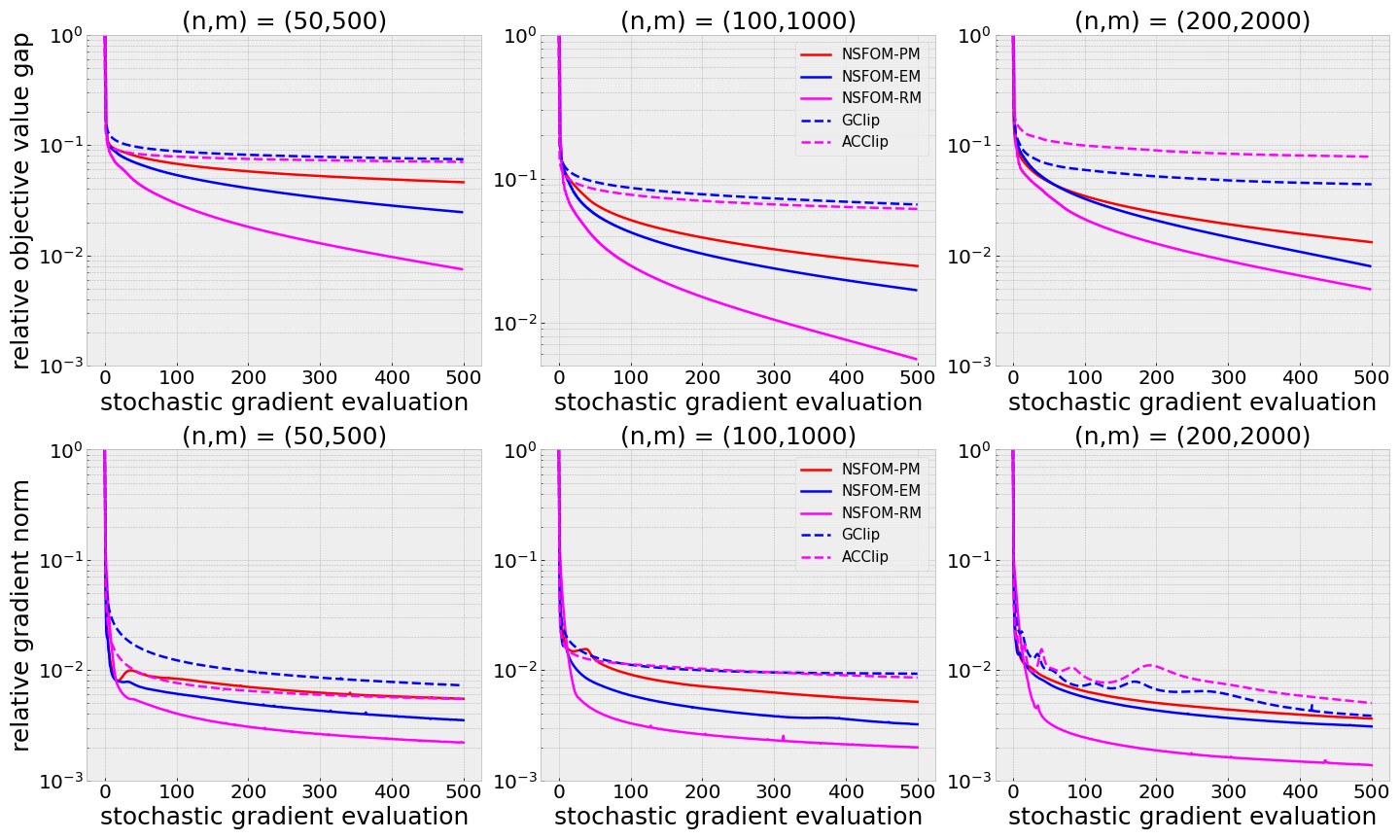}
\caption{Convergence behavior of the relative objective value gap (first row) and relative gradient norm (second row) for all the methods when solving problem \eqref{df}.}
\label{fig:df-loss}
\end{figure}

For each pair $(n,m)$, we plot the relative objective value gap and the relative gradient norm in Figure \ref{fig:df-loss} to illustrate the convergence behavior of all the SFOMs. As shown in Figure~\ref{fig:df-loss}, SFOMs with normalization (namely, NSFOM-PM, NSFOM-EM, NSFOM-RM) consistently outperform SFOMs with clipping (namely, GClip and ACClip) after an early stage of the iterations. This may be because gradient normalization allows a larger step size than gradient clipping when the stochastic gradient norm is below the clipping threshold. 
Furthermore, among the normalized variants, those using extrapolated momentum and recursive momentum converge faster than the one using Polyak momentum. Notably, the normalized SFOM with recursive momentum achieves the best performance, even outperforming the extrapolated momentum variant. These observations are consistent with our theoretical results.

\subsection{Robust regression problem}\label{subsec:rrp}
In this subsection, we consider the robust regression problem:
\begin{align}\label{robust-reg}
\min_{x\in\R^n}  \Big\{f(x) = \sum_{k=1}^m\sum_{i=1}^N \phi(a_{ik}^Tx-b_{ik})\Big\},
\end{align}
where $\phi(t)=t^2/(1+t^2)$ is a robust loss function \cite{carmon2017convex}, and $\{(a_{ik},b_{ik})\}_{1\le i\le N}\subset\R^n\times\R$, $1\le k\le m$, is the $k$th batch of the training set. We consider this problem on three real datasets, `red wine quality', `white wine quality', and `covtype' from 
the UCI repository.\footnote{see {\tt archive.ics.uci.edu/datasets}} For each dataset, we rescale both the features and predictions to lie in $[0,1]$, and set the batch size $N=100$.

We apply NSFOM-PM, NSFOM-EM, NSFOM-RM, GClip, and ACClip to solve \eqref{robust-reg}. All methods are initialized at the zero vector. Similar to Subsection \ref{subsec:df}, we compare these methods in terms of the relative objective value gap and the relative gradient norm, defined respectively as $(f(x^k)-f^*)/(f(x^0)-f^*)$ and $\|\nabla f(x^k)\|/\|\nabla f(x^0)\|$, over the first $500$ stochastic gradient evaluations, where $f^*$ is the minimum objective value found during the first $600$ stochastic gradient evaluations across all the SFOMs. 
The algorithmic parameters are chosen according to the following strategy:
\begin{itemize}
    \item For NSFOM-PM, NSFOM-EM, and NSFOM-RM, we choose the step sizes and the momentum parameters as $\{(k+1)^{-\beta_1}\}$ and $\{(k+1)^{-\beta_2}\}$, respectively, with $\beta_1,\beta_2>0$ tuned to optimize the empirical performance of each method. For NSFOM-EM, the extrapolation parameters are set equal to the momentum parameters, in accordance with~\eqref{pth-lss} with $q = 1$.
    \item For GClip and ACClip, the algorithmic parameters are chosen  following the strategy described in Subsection~\ref{subsec:df}.
\end{itemize}

\begin{figure}[htbp]
\centering
\includegraphics[width=.9\linewidth]{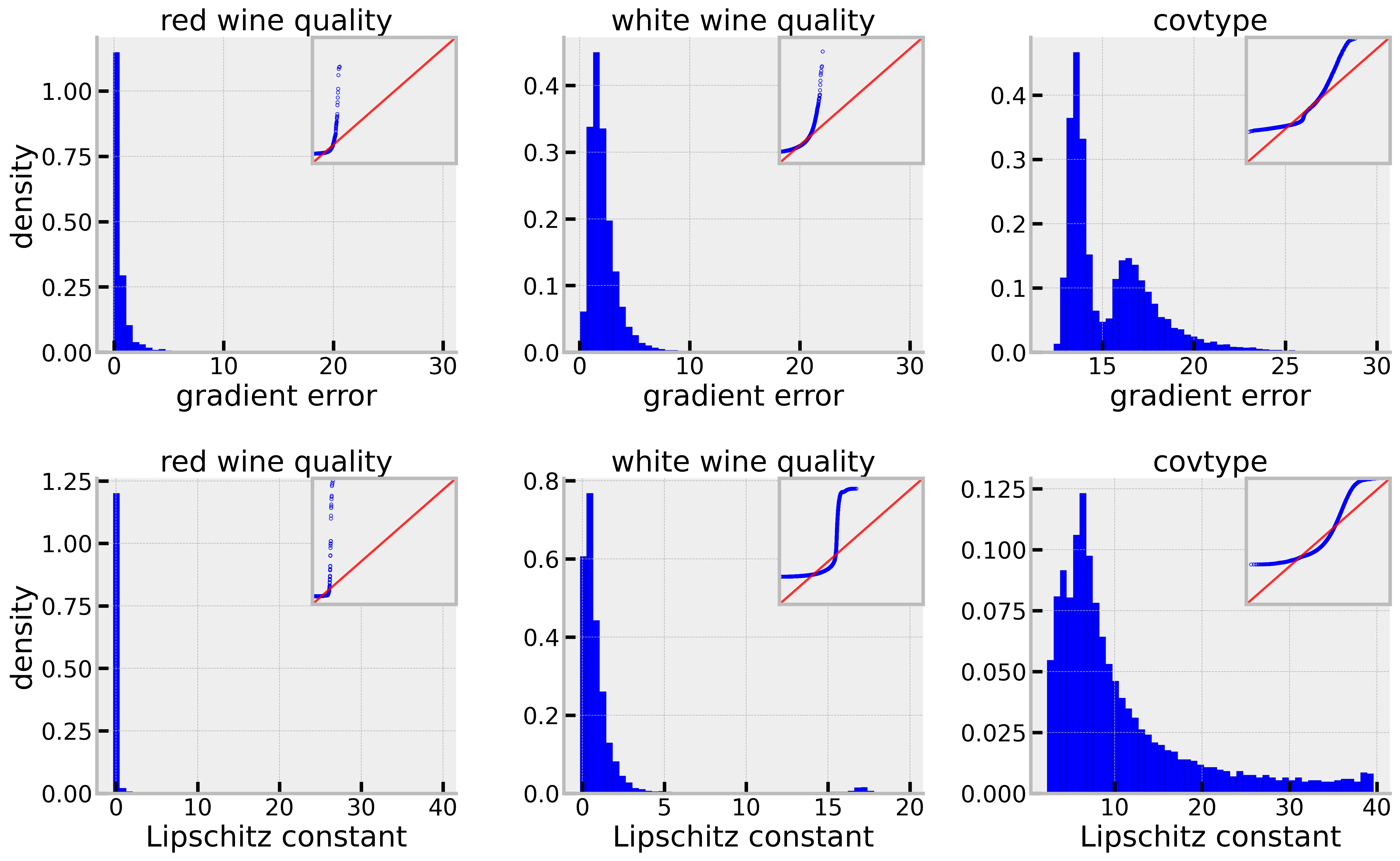}
\caption{Distributions of gradient errors $\|G(x;\xi)-\nabla f(x)\|$ (first row) and Lipschitz constant estimates $\|G(y;\xi)-G(x;\xi)\|/\|y-x\|$ (second row) compared against a normal distribution (QQ-plot), when solving \eqref{robust-reg}. Here, the gradient errors are calculated for the first epoch of optimization, and the Lipschitz constant estimates are taken over every two consecutive iterates within the first epoch of optimization for all methods.}
\label{fig:ht-gn-Lc}
\end{figure}

For each dataset, we visualize the distributions of gradient errors and Lipschitz constant estimates, compared against a normal distribution (QQ-plot) in Figure \ref{fig:ht-gn-Lc}, to illustrate their heavy-tailed behavior. This visualizations partly justify the heavy-tailed noise condition in Assumption \ref{asp:basic}(c) and the weakly average smoothness condition in Assumption \ref{asp:gen-ave-smth} when solving the regression problem \eqref{robust-reg}. 

\begin{figure}[htbp]
\centering
\includegraphics[width=.9\linewidth]{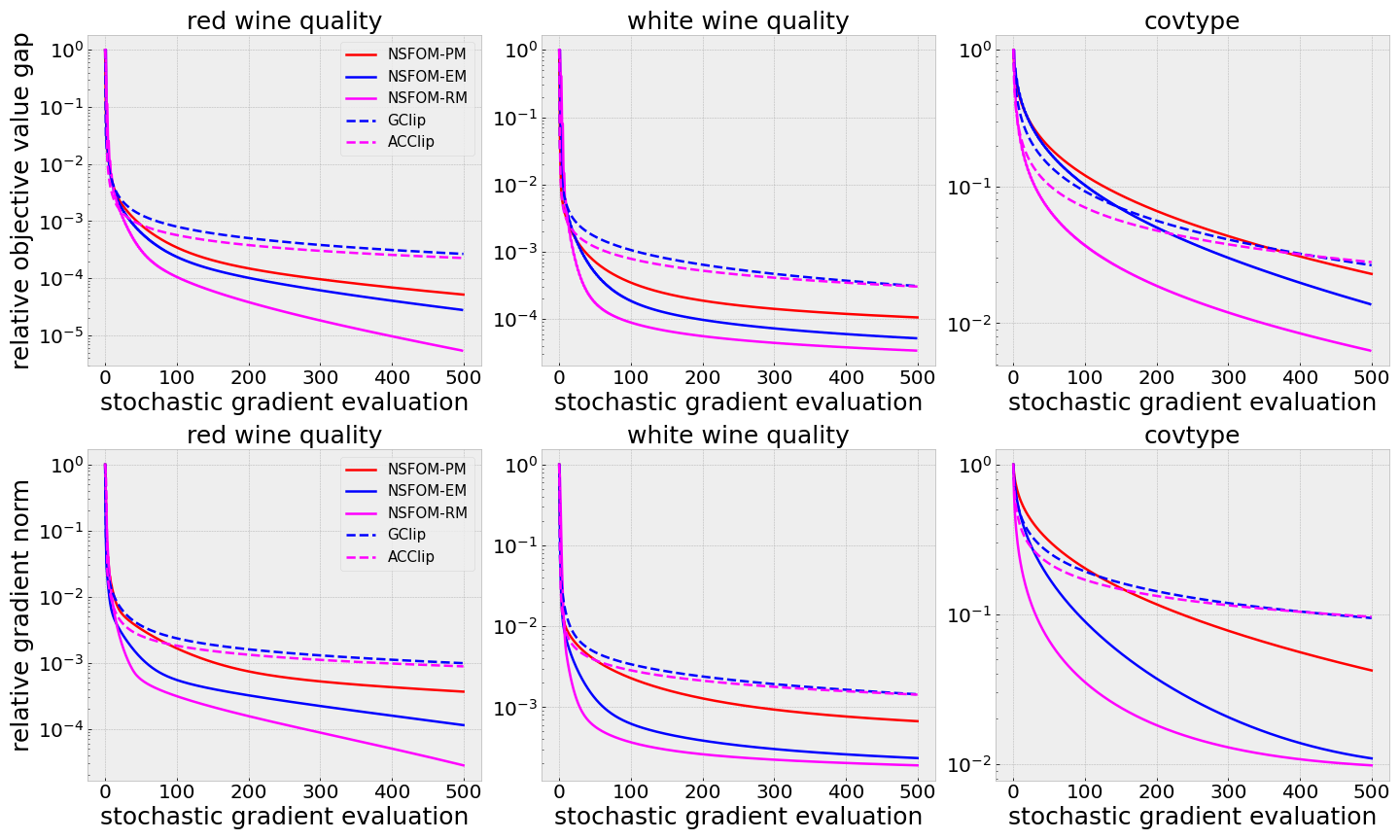}
\caption{Convergence behavior of the relative objective value gap (first row) and relative gradient norm (second row) for all the methods when solving problem \eqref{robust-reg}.}
\label{fig:real-reg}
\end{figure}

In addition, for each dataset, we plot the relative objective value gap and the relative gradient norm in Figure \ref{fig:real-reg} to illustrate the convergence behavior of all the SFOMs. From Figure \ref{fig:real-reg}, we observe that SFOMs with normalization (namely, NSFOM-PM, NSFOM-EM, NSFOM-RM) consistently outperform SFOMs with clipping (namely, GClip and ACClip) after an early stage of the iterations. This may be because gradient normalization leads to a larger step size than gradient clipping when the stochastic gradient norm is below the clipping threshold. We also observe that the normalized SFOMs with extrapolated and recursive momentum are faster than the SFOM with Polyak momentum, while the SFOM with recursive momentum outperforms the SFOM with extrapolated momentum. These observations align with our theoretical results.

\subsection{Multimodal contrastive learning problem}\label{subsec:mcl}
In this subsection, we consider the multimodal contrastive learning problem (see \cite{radford2021learning}):
{\small
\begin{align}
\min_{x_I\in\R^{n_I},x_T\in\R^{n_T}}  -\sum_{k=1}^m\sum_{i=1}^N\bigg(\ln\bigg(\frac{\exp(f_{x_I}(a_{ik})^Tf_{x_T}(b_{ik})/\tau)}{\sum_{j=1}^N\exp(f_{x_I}(a_{ik})^Tf_{x_T}(b_{jk})/\tau)}\bigg) + \ln\bigg(\frac{\exp(f_{x_I}(a_{ik})^Tf_{x_T}(b_{ik})/\tau)}{\sum_{j=1}^N\exp(f_{x_I}(a_{jk})^Tf_{x_T}(b_{ik})/\tau)}\bigg)\bigg),\label{multi-modal} 
\end{align}
}

\noindent where $\{(a_{ik},b_{ik})\}_{1\le i\le N}$, $1\le k\le m$, denotes the image-caption pairs for the $k$th batch of training dataset, $f_{x_I}$ and $f_{x_T}$ are the image and text encoders, respectively, and $\tau>0$ is a temperature parameter. Here, we consider problem \eqref{multi-modal} on three real text-image datasets, Flickr \cite{plummer2015flickr30k}, MSCOCO \cite{lin2014microsoft}, and CC3M \cite{sharma2018conceptual}, and choose the network structure for image encoder $f_{x_I}$ and text encoder $f_{x_T}$ as ResNet50 \cite{he2016deep} and DistilBERT \cite{sanh2019distilbert}, respectively.  

We apply NSFOM-PM, NSFOM-EM, NSFOM-RM, GClip, and ACClip to solve problem \eqref{multi-modal}. We use the same initial weights for all methods as those of the pretrained models ResNet50 and DistilBERT. Similar to Subsection \ref{subsec:df}, we compare these methods in terms of the relative objective value gap and the relative gradient norm, defined respectively as $(f(x^k)-f^*)/(f(x^0)-f^*)$ and $\|\nabla f(x^k)\|/\|\nabla f(x^0)\|$, over the first $50,000$ stochastic gradient evaluations, where $f$ denotes the objective function of \eqref{multi-modal} and $f^*$ is the minimum objective value found during the first $60,000$ stochastic gradient evaluations across all the SFOMs. The other algorithmic parameters are chosen according to the strategy described in Subsection \ref{subsec:rrp}.

\begin{figure}[htbp]
\centering
\includegraphics[width=.9\linewidth]{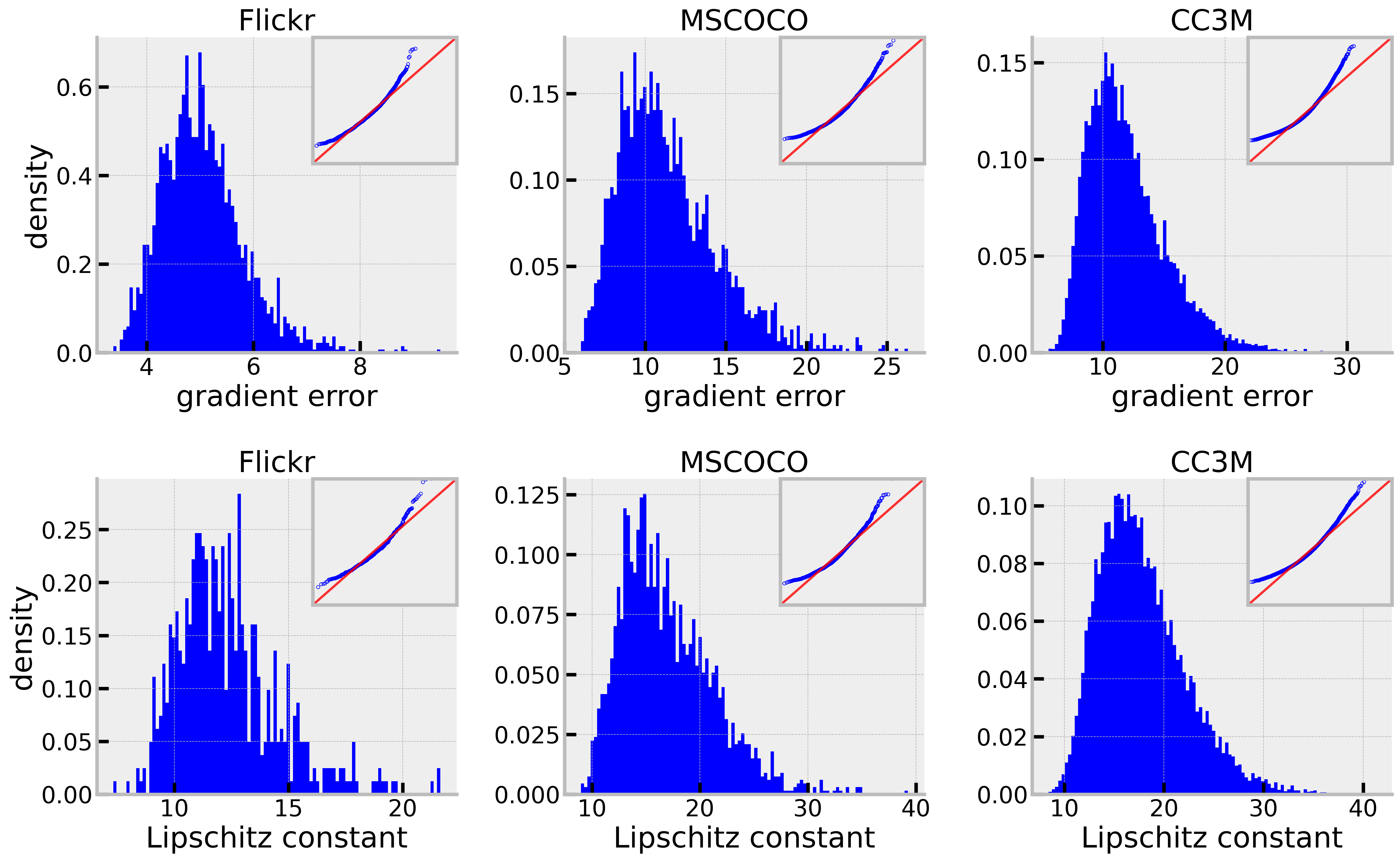}
\caption{Distributions of gradient errors $\|G(x;\xi)-\nabla f(x)\|$ (first row) and Lipschitz constant estimates $\|G(x;\xi)-G(y;\xi)\|/\|x-y\|$ (second row) compared against a normal distribution (QQ-plot), when solving \eqref{multi-modal}. Here, the gradient errors are calculated for the first epoch of training, and the Lipschitz constant estimates are taken over every two consecutive iterates within the first epoch of training for all methods.}
\label{fig:ht-gn-Lc-mm}
\end{figure}

For each dataset, we visualize the distributions of gradient errors and Lipschitz constant estimates, compared against a normal distribution (QQ-plot) in Figure \ref{fig:ht-gn-Lc-mm}, to illustrate their heavy-tailed behavior. These visualizations provide partial justification for the heavy-tailed noise condition in Assumption \ref{asp:basic}(c) and the weakly average smoothness condition in Assumption \ref{asp:gen-ave-smth} when solving the multimodal contrastive learning problem \eqref{multi-modal}. 

\begin{figure}[htbp]
\centering
\includegraphics[width=.9\linewidth]{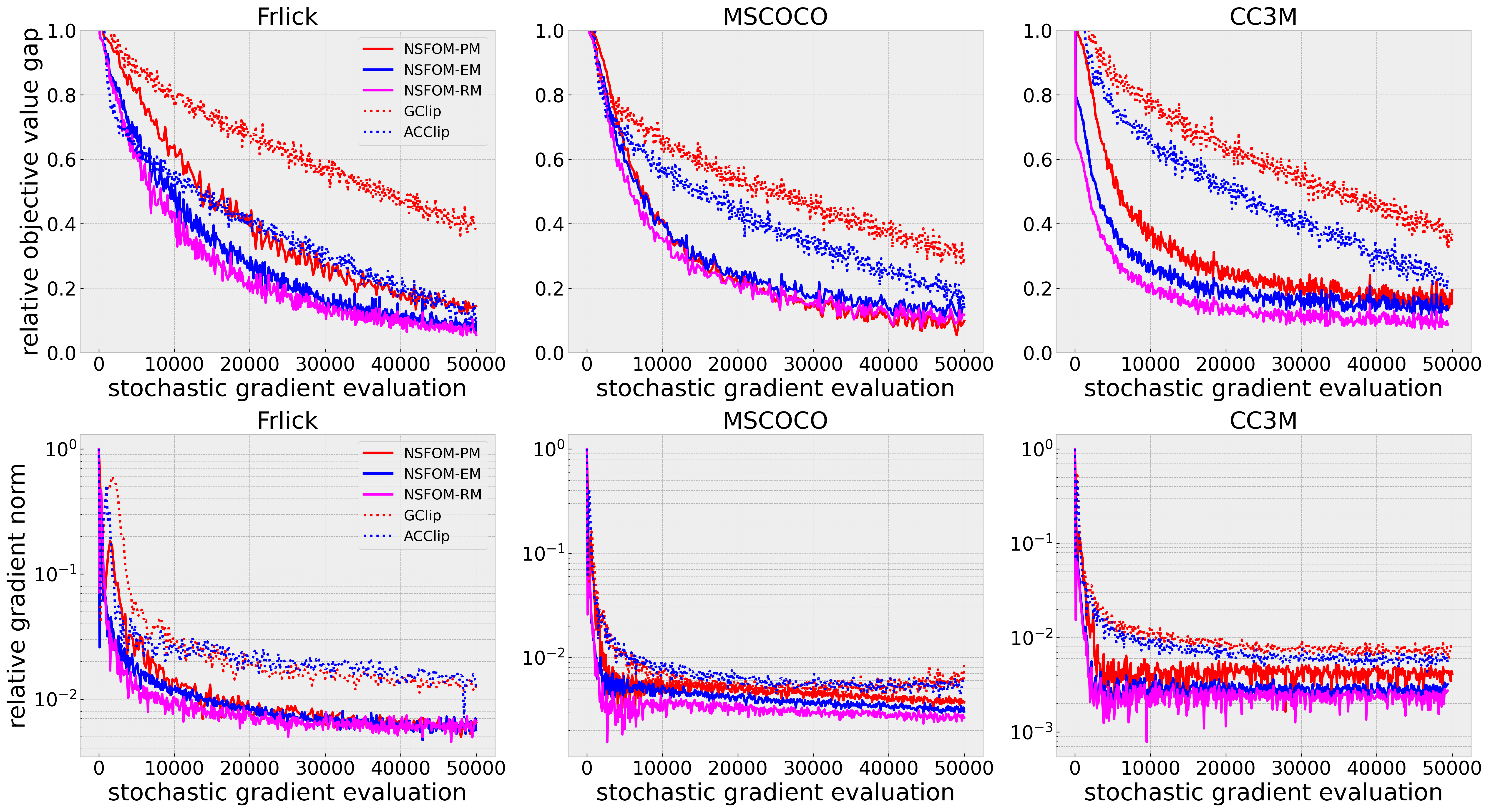}
\caption{Convergence behavior of the relative objective value gap (first row) and relative gradient norm (second row) for all  the methods when solving problem \eqref{multi-modal}.}
\label{fig:mm}
\end{figure}

In addition, for each dataset, we plot the relative objective value gap and the relative gradient norm in Figure \ref{fig:mm} to illustrate the convergence behavior of the SFOMs. 
From Figure \ref{fig:mm}, we observe that the normalized SFOMs (namely, NSFOM-PM, NSFOM-EM, NSFOM-RM) outperform the clipped SFOMs  (namely, GClip and ACClip) after an early stage of iterations. This is likely because gradient normalization allows a larger step size than gradient clipping when the stochastic gradient norm falls below the clipping threshold. We also observe that the normalized SFOMs with extrapolated and recursive momentum converge faster than the SFOM with Polyak momentum, while the SFOM with recursive momentum slightly outperforms the one with extrapolated momentum. These phenomena are generally consistent with our theoretical results.

\subsection{Comparison over different algorithmic parameters of the normalized SFOMs}
In this subsection, we investigate the performance of the normalized SFOMs (namely, NSFOM-PM, NSFOM-EM, and NSFOM-RM) under different choices of algorithmic parameters.

\begin{figure}[htbp]
\centering
\includegraphics[width=.9\linewidth]{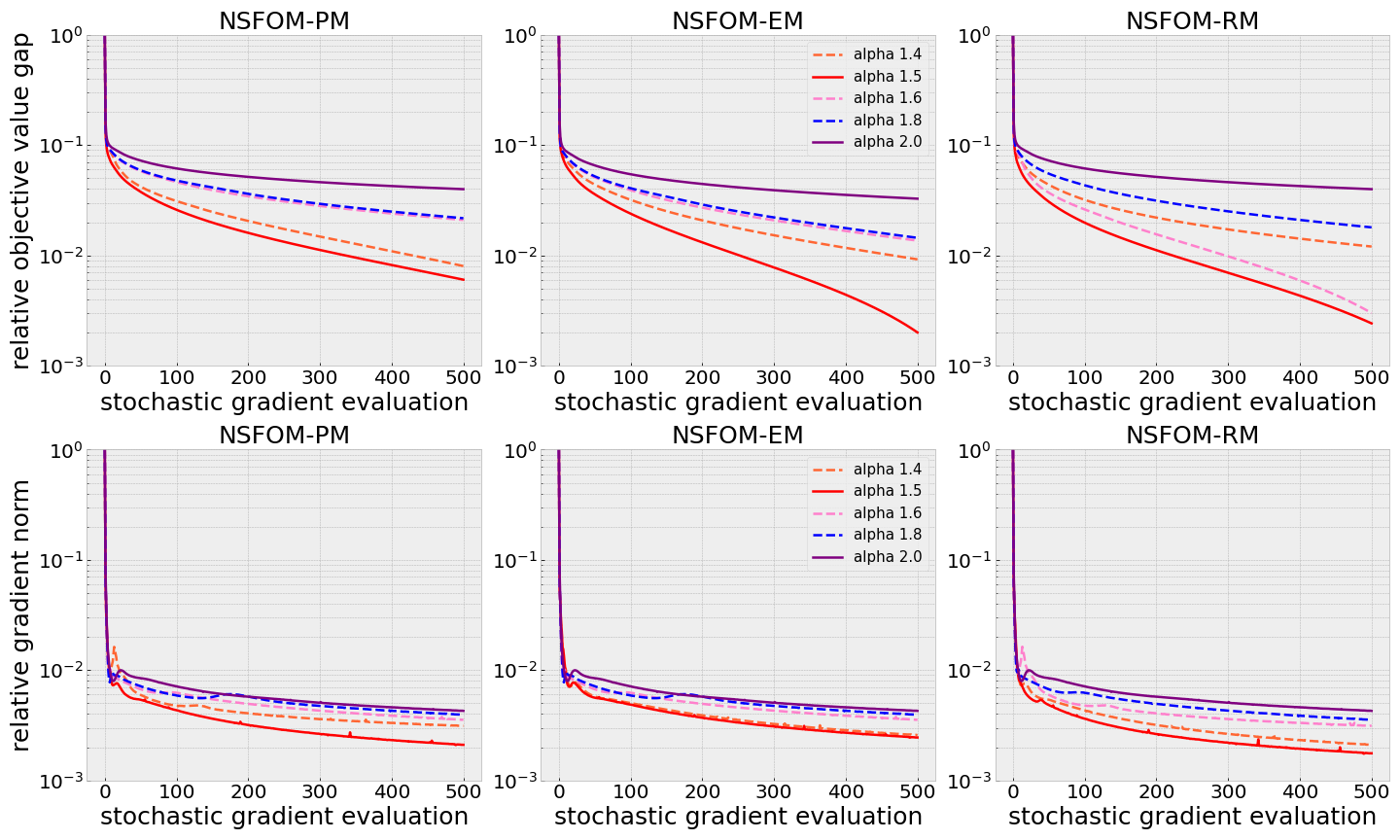}
\caption{Convergence behavior of the relative objective value gap (first row) and relative gradient norm (second row) for the normalized SFOMs when solving problem \eqref{df} with $(n,m)=(200,2000)$.}
\label{fig:df-tune}
\end{figure}


\begin{figure}[htbp]
\centering
\includegraphics[width=.9\linewidth]{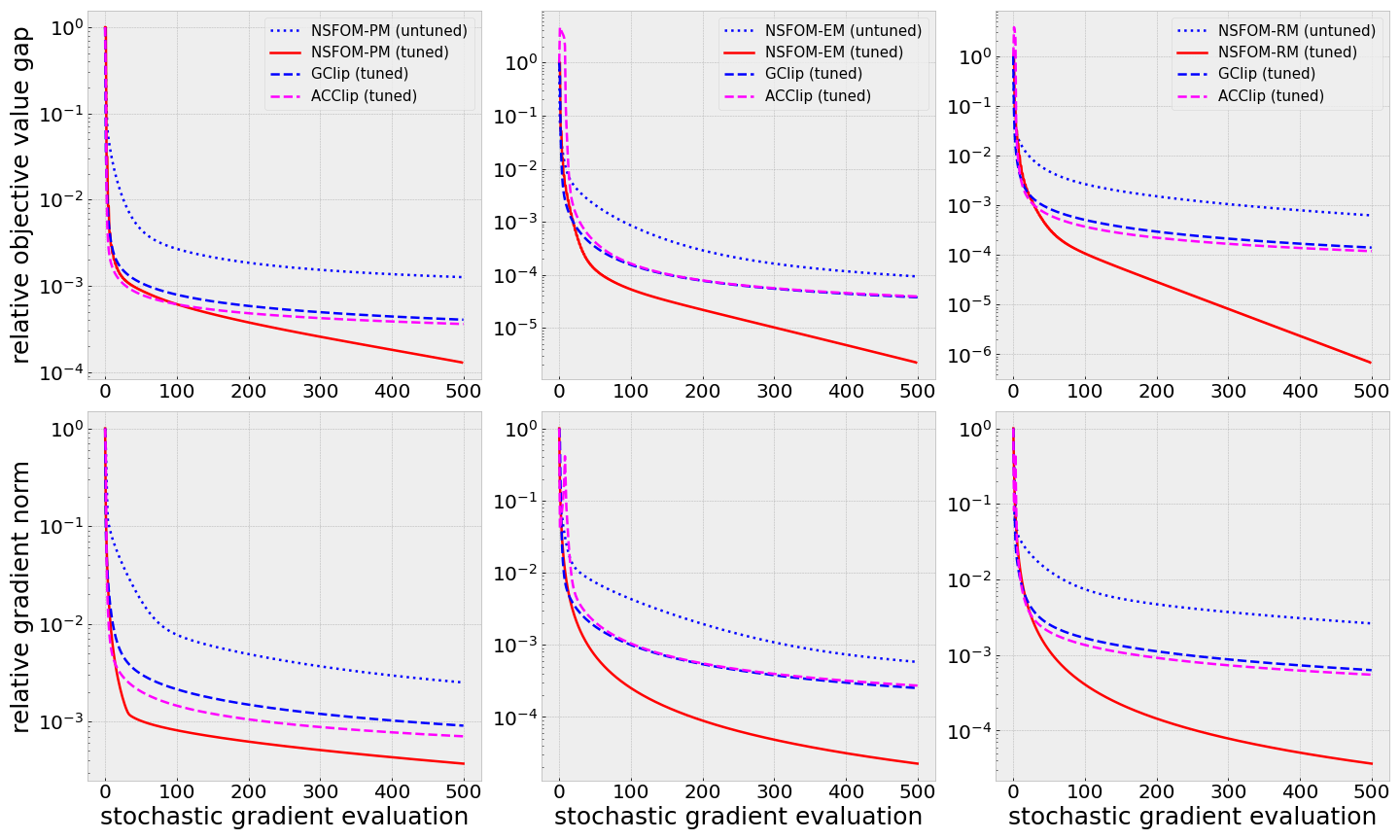}
\caption{Convergence behavior of the relative objective value gap (first row) and relative gradient norm (second row) for the untuned normalized SFOMs and tuned SFOMs when solving problem \eqref{robust-reg} on the White Wine Quality dataset.}
\label{fig:rr-tune}
\end{figure}


\begin{figure}[htbp]
\centering
\includegraphics[width=.9\linewidth]{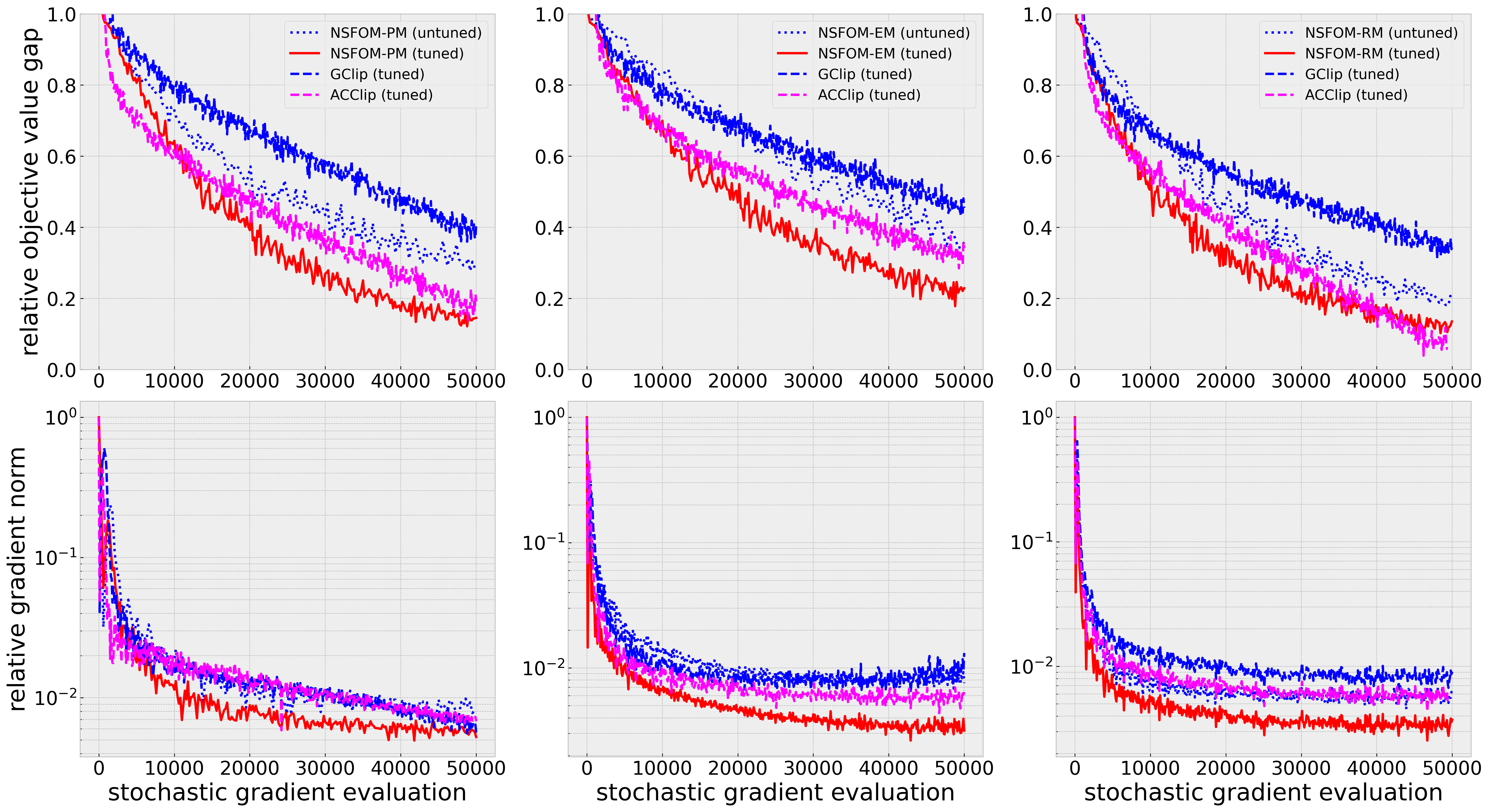}
\caption{Convergence behavior of the relative objective value gap (first row) and relative gradient norm (second row) for the untuned normalized SFOMs and tuned SFOMs when solving problem \eqref{multi-modal} on the Frlick dataset.}
\label{fig:mm-tune}
\end{figure}


We begin by comparing the performance of the normalized SFOMs under different algorithmic parameter choices for solving the data fitting problem \eqref{df}. Specifically, we consider three parameterization strategies. First, we apply the normalized SFOMs with $\alpha = 1.5$, using the parameter settings specified in Theorems \ref{thm:s-rate-pm}, \ref{thm:known-rate-mem}, and \ref{thm:known-rate-rm}. Second, we artificially treat $\alpha$ as unknown and apply the normalized SFOMs with the parameter
settings given in Theorems~\ref{thm:un-s-rate-pm}, \ref{thm:unknown-rate-mem}, and \ref{thm:unknown-rate-rm}. Third, again treating $\alpha$ as unknown, we apply the normalized SFOMs with the parameter settings specified in Theorems~\ref{thm:s-rate-pm}, \ref{thm:known-rate-mem}, and \ref{thm:known-rate-rm}, but with a grid of trial values for $\alpha$. We plot the relative objective value gap and the relative gradient norm in Figure~\ref{fig:df-tune} to illustrate the convergence behavior of the normalized SFOMs under different parameter choices. As shown in Figure~\ref{fig:df-tune}, the parameter settings based on $\alpha = 1.5$, as prescribed in Theorems~\ref{thm:s-rate-pm}, \ref{thm:known-rate-mem}, and \ref{thm:known-rate-rm}, achieve the fastest convergence among all tested configurations. This result demonstrates that aligning $\alpha$ with the gradient noise level leads to superior practical performance compared with alternative parameter selections.


We next compare the performance of the tuned SFOMs (namely, tuned NSFOM-PM, NSFOM-EM, NSFOM-RM, GClip, and ACClip) and the untuned normalized SFOMs (namely, untuned NSFOM-PM, NSFOM-EM, and NSFOM-RM) for solving the robust regression problem~\eqref{robust-reg} and the multimodal contrastive learning problem~\eqref{multi-modal}. For the tuned SFOMs, algorithmic parameters are selected as described in Subsection~\ref{subsec:rrp}. For the untuned SFOMs, the parameters are chosen according to Theorems~\ref{thm:un-s-rate-pm}, \ref{thm:unknown-rate-mem}, and \ref{thm:unknown-rate-rm}. The performance of the tuned and untuned methods on problems~\eqref{robust-reg} and~\eqref{multi-modal} is reported in Figures~\ref{fig:rr-tune} and~\ref{fig:mm-tune}, respectively. From Figure~\ref{fig:rr-tune}, we observe that when solving problem \eqref{robust-reg} on the White Wine Quality dataset, the performance of the normalized SFOMs deteriorates when the algorithmic parameters are untuned, and can even be inferior to that of the tuned GClip and ACClip methods. Similarly, Figure~\ref{fig:mm-tune} shows that for problem~\eqref{multi-modal} on the Flickr dataset, the performance of the normalized SFOMs also degrades when the parameters are untuned. In this setting, the untuned normalized SFOMs perform worse than the tuned ACClip, while still slightly outperforming the tuned GClip.

\begin{figure}[htbp]
\centering
\includegraphics[width=.9\linewidth]{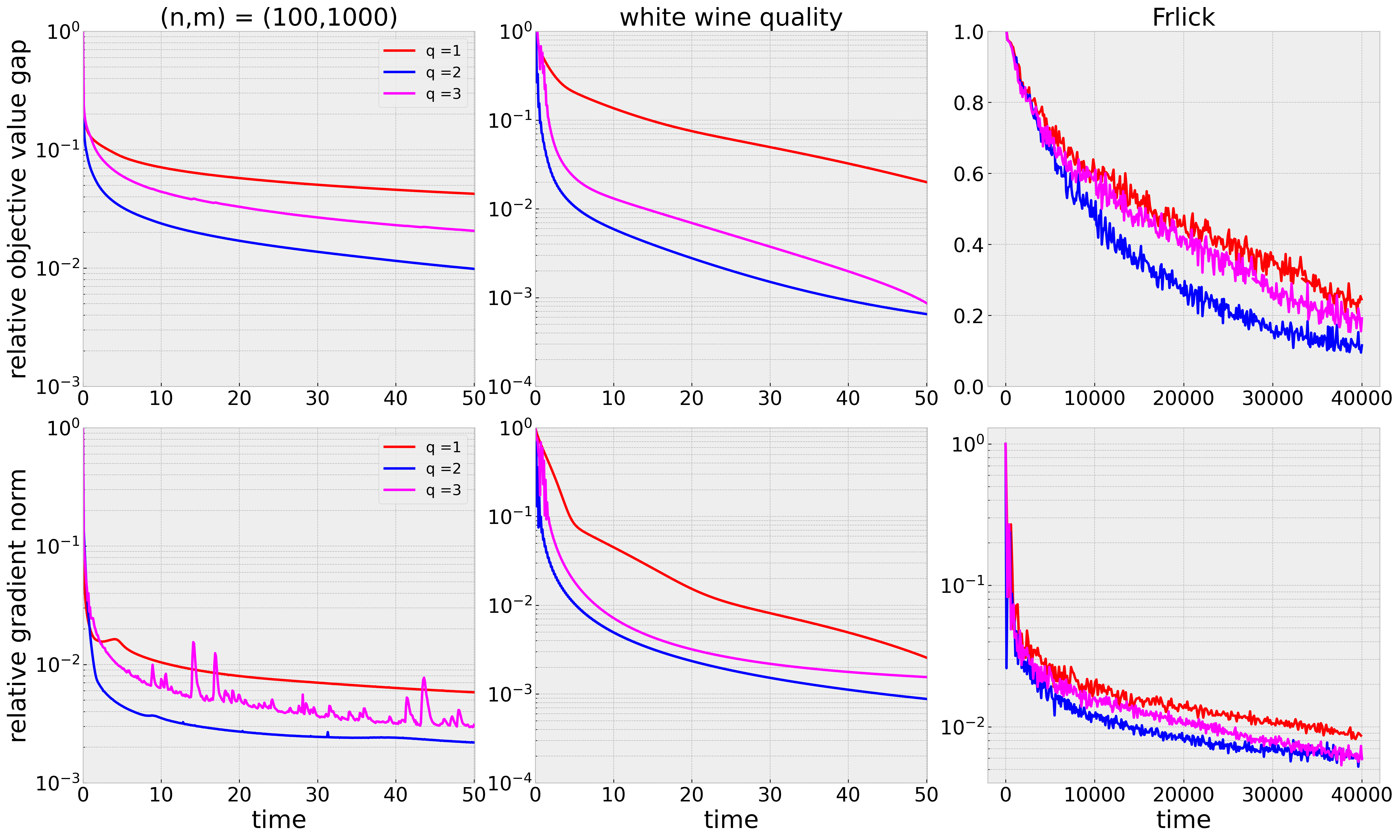}
\caption{Convergence behavior of the relative objective value gap (first row) and the relative gradient norm (second row) for Algorithm~\ref{alg:unf-sfom} with different values of $q$ when solving problems~\eqref{df}, \eqref{robust-reg}, and \eqref{multi-modal}.}
\label{fig:ec}
\end{figure}

Finally, we compare Algorithm~\ref{alg:unf-sfom} with $q = 1, 2,$ and $3$ for solving problems~\eqref{df}, \eqref{robust-reg}, and \eqref{multi-modal}, respectively, under a fixed maximum computation time (in seconds). Specifically, for problem~\eqref{df}, we set $(n,m) = (100,1000)$ and randomly generate instances in the same manner as described in Subsection~\ref{subsec:df}. For problems~\eqref{robust-reg} and~\eqref{multi-modal}, we use the White Wine Quality and Flickr datasets, respectively. For each value of $q$, we set the step sizes $\{\eta_k\}$ and the extrapolation parameters $\{\gamma_{k,t}\}$ of Algorithm~\ref{alg:unf-sfom} to $\{(k+1)^{-\beta_1}\}$ and $\{t^{-2}(k+1)^{-\beta_2}\}$, respectively, where $\beta_1, \beta_2 > 0$ are tuned individually to optimize empirical performance. In addition, the weighting parameters ${\theta_{k,t}}$ are chosen by solving~\eqref{pth-lss}. The performance of Algorithm~\ref{alg:unf-sfom} is reported in Figure~\ref{fig:ec}. As shown in Figure~\ref{fig:ec}, Algorithm~\ref{alg:unf-sfom} with $q = 2$ or $3$ generally outperforms the case $q = 1$. Moreover, Algorithm~\ref{alg:unf-sfom} with $q = 3$ often exhibits worse performance compared with $q = 2$. This phenomenon may be due to the fact that the per-iteration computational cost appears to increase more rapidly than the acceleration gain provided by the extrapolation scheme when increasing $q$ from $2$ to $3$.

\section{Proof of the main results}\label{sec:pf-man}
In this section, we present the proofs of the main results stated in Section~\ref{sec:nsgm}, namely, Theorems~\ref{thm:s-rate-pm} through~\ref{thm:unknown-rate-rm}.

\subsection{Proof overview and technical lemmas}

This subsection provides an overview of the proof strategy for the main results and introduces the technical lemmas that will be used throughout the analysis.

We derive the complexity bounds of Algorithms~\ref{alg:unf-sfom-pm}, \ref{alg:unf-sfom}, and \ref{alg:unf-sfom-rm} in a unified manner by establishing the descent of a potential sequence defined as
\begin{align}\label{def:pot-pm}
{\mathcal P}_k := f(x^k) + p_k\|m^k - \nabla f(x^k)\|^\alpha \qquad \forall k \ge 0,
\end{align}
where the sequence $\{(x^k, m^k)\}$ is generated by the corresponding algorithm, and $\{p_k\}$ is a sequence of positive scalars specified separately for each case. The descent of the potential sequence $\{\mathcal P_k\}$ is established by combining the descent properties of the function value sequence $\{f(x^k)\}$ and the gradient estimation error sequence $\{\|m^k - \nabla f(x^k)\|^\alpha\}$. Specifically, the descent property of $\{f(x^k)\}$ is derived in Lemma~\ref{lem:ppt-desc} by analyzing the update along a normalized direction. The descent property of $\{\|m^k - \nabla f(x^k)\|^\alpha\}$ is derived based on the choice of momentum scheme (see Lemmas~\ref{lem:rec-pm}, \ref{lem:rec-em}, and \ref{lem:rec-rm}).


The following lemma provides an expansion for the $\alpha$-power of the Euclidean norm, generalizing the well-known identity $\|u + v\|^2 = \|u\|^2 + 2u^T v + \|v\|^2$ and inequality $\|u + v\|^2\le (1+c)\|u\|^2+(1+1/c)\|v\|^2$ for all $u,v\in\mathbb{R}^n$ and $c>0$. It will be used repeatedly to derive the descent properties of the gradient estimation error sequence $\{\|m^k - \nabla f(x^k)\|^\alpha\}$.

\begin{lemma}\label{lem:expan-alpha}
For any $\alpha\in(1,2]$, it holds that
\begin{align}
&\|u+v\|^\alpha \le \|u\|^\alpha + \alpha\|u\|^{\alpha-2} u^Tv + 2\|v\|^\alpha\qquad\forall u,v\in\mathbb{R}^n,\label{open-alpha}\\
&\|u+v\|^\alpha\le (1+c)\|u\|^\alpha + (2 + (\alpha-1)^{\alpha-1}c^{1-\alpha})\|v\|^\alpha\qquad\forall u,v\in\mathbb{R}^n, c>0. \label{open-alpha-2}   
\end{align}
\end{lemma}

\begin{proof}
Let $\alpha\in(1,2]$ be arbitrarily chosen, and $\psi(w)=\|w\|^\alpha$ for all $w\in\R^n$. It follows from \cite[Theorem 6.3]{rodomanov2020smoothness} that 
\[
\|\nabla\psi(w^1) - \nabla\psi(w^2)\|\le 2^{2-\alpha}\alpha\|w^1-w^2\|^{\alpha-1} \quad \forall w^1, w^2\in\mathbb{R}^n.
\]
By this, one has that for any $u,v\in\R^n$, 
\begin{align*}
&|\psi(u+v) - \psi(u) - \nabla\psi(u)^Tv|=\Big|\int^1_0(\nabla \psi(u+tv) - \nabla \psi(u))^Tv\mathrm{d}t\Big|\\
&\le  \int^1_0\|\nabla \psi(u+tv) - \nabla \psi(u)\|\mathrm{d}t\cdot \|v\|\le 2^{2-\alpha}\alpha \int^1_0\|tv\|^{\alpha-1}\mathrm{d}t\cdot \|v\| = 2^{2-\alpha}\|v\|^\alpha,
\end{align*}
 This along with $\alpha\in(1,2]$ and the fact that $\psi(w)=\|w\|^\alpha$ and $\nabla\psi(w)=\alpha\|w\|^{\alpha-2}w$ implies that \eqref{open-alpha} holds. We next prove \eqref{open-alpha-2}. Let $\alpha^\prime=\alpha/(\alpha-1)$. By the Young's inequality, one has that for all $c>0$,
\[
\alpha\|u\|^{\alpha-2} u^Tv \le \frac{\big((c\alpha^\prime)^{1/\alpha^\prime}\big\|\|u\|^{\alpha-2} u\big\|\big)^{\alpha^\prime}}{\alpha^\prime} + \frac{\big(\alpha\|v\|/(c\alpha^\prime)^{1/\alpha^\prime}\big )^\alpha}{\alpha} =c\|u\|^\alpha + \frac{(\alpha-1)^{\alpha-1}\|v\|^\alpha}{c^{\alpha-1}},
\]
which together with \eqref{open-alpha} implies that \eqref{open-alpha-2} holds.
\end{proof}

The following lemma provides an estimation of the partial sums of series.

\begin{lemma}
Let $\zeta(\cdot)$ be a convex univariate function. Then it holds that $\sum_{r=a}^b\zeta(r)\le \int^{b+1/2}_{a-1/2}\zeta(\tau)\mathrm{d}\tau$ for any integers $a,b$ satisfying $[a-1/2,b+1/2]\subset\mathrm{dom}\,\zeta$. Consequently,  one has 
\begin{align}
\sum_{r=a}^b \frac{1}{r^\beta} \le \left\{\begin{array}{ll}
\ln\big(b+\frac{1}{2}\big) - \ln\big(a-\frac{1}{2}\big)&\text{if }\beta=1,  \\[4pt]
\frac{1}{1-\beta}\big(\big(b+\frac{1}{2}\big)^{1-\beta} - \big(a-\frac{1}{2}\big)^{1-\beta}\big)&\text{if }\beta\in(0,1)\cup(1,+\infty).
\end{array}\right.
\label{upbd:series-ka} 
\end{align}
\end{lemma}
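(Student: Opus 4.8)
The plan is to prove the two claims in turn, with the integral comparison being the essential tool and the rest being a direct computation. First I would establish the main inequality $\sum_{r=a}^{b}\zeta(r)\le\int_{a-1/2}^{b+1/2}\zeta(\tau)\,\mathrm{d}\tau$ for a convex $\zeta$. The natural approach is a term-by-term estimate: for each integer $r$ in $\{a,\ldots,b\}$, I would show that $\zeta(r)\le\int_{r-1/2}^{r+1/2}\zeta(\tau)\,\mathrm{d}\tau$, and then sum over $r$, noting the intervals $[r-1/2,r+1/2]$ tile $[a-1/2,b+1/2]$ exactly. The per-term bound follows from convexity via the midpoint (Hermite--Hadamard) inequality: for a convex function, the value at the midpoint of an interval is at most the average of the function over that interval, i.e. $\zeta\big(\tfrac{u+v}{2}\big)\le\frac{1}{v-u}\int_u^v\zeta(\tau)\,\mathrm{d}\tau$; applying this with $u=r-1/2$, $v=r+1/2$ gives exactly $\zeta(r)\le\int_{r-1/2}^{r+1/2}\zeta(\tau)\,\mathrm{d}\tau$. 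One quick way to see the midpoint inequality is to write $\tau=\tfrac{u+v}{2}+s$ and pair the contributions of $s$ and $-s$, using $\zeta\big(\tfrac{u+v}{2}+s\big)+\zeta\big(\tfrac{u+v}{2}-s\big)\ge 2\zeta\big(\tfrac{u+v}{2}\big)$.

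Next I would specialize to $\zeta(\tau)=\tau^{-\beta}$ with $\beta>0$, which is convex on $(0,+\infty)$; here the hypothesis $[a-1/2,b+1/2]\subset\mathrm{dom}\,\zeta$ amounts to $a\ge 1$. Applying the general inequality reduces everything to evaluating $\int_{a-1/2}^{b+1/2}\tau^{-\beta}\,\mathrm{d}\tau$, which is a textbook antiderivative: $\ln\tau$ when $\beta=1$, and $\frac{\tau^{1-\beta}}{1-\beta}$ when $\beta\neq 1$. Evaluating at the endpoints yields precisely the two cases in \eqref{upbd:series-ka}.

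I do not anticipate a genuine obstacle here; the argument is short and self-contained. The only point requiring a modicum of care is the role of the domain hypothesis: one needs $\zeta$ to be finite (and convex) on the whole interval $[a-1/2,b+1/2]$ so that the integral is well-defined and the midpoint inequality applies on each subinterval, which for $\tau^{-\beta}$ forces the implicit restriction to positive arguments (hence to integer indices $a\ge1$). Beyond that, the proof is a clean combination of the Hermite--Hadamard midpoint inequality with an elementary integration.
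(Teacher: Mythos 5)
Your proposal is correct and follows essentially the same route as the paper: both reduce the claim to the per-term bound $\zeta(r)\le\int_{r-1/2}^{r+1/2}\zeta(\tau)\,\mathrm{d}\tau$ (the midpoint form of convexity, which the paper derives via the supporting-line inequality and you derive via the symmetric pairing $\zeta(m+s)+\zeta(m-s)\ge 2\zeta(m)$ --- two standard proofs of the same fact), then sum over the tiling intervals and evaluate the antiderivative of $\tau^{-\beta}$.
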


\begin{proof}
Let $a,b$ be integers satisfying $[a-1/2,b+1/2]\subset\mathrm{dom}\,\zeta$. Since $\zeta$ is convex, one has 
$\zeta(\tau) \geq \zeta(r)+s^T (\tau-r)$ for all $s\in\partial \zeta(r)$ and $r\in[a,b]$. It then follows that 
\[
 \int_{r-1/2}^{r+1/2}\zeta(\tau)\mathrm{d}\tau \geq  \int_{r-1/2}^{r+1/2} \big(\zeta(r)+s^T (\tau-r)\big) \mathrm{d}\tau = \zeta(r),
\]
which implies that $\sum_{r=a}^b\zeta(r)\le \sum^b_{r=a}\int_{r-1/2}^{r+1/2}\zeta(\tau)\mathrm{d}\tau=\int_{a-1/2}^{b+1/2}\zeta(\tau)\mathrm{d}\tau$.  By this and  $\zeta(\tau)=1/\tau^\beta$, one can see that \eqref{upbd:series-ka} holds. 
\end{proof}

We next provide a lemma that will be used to derive complexity bounds subsequently.

\begin{lemma}\label{lem:rate-complexity}
Let $\beta\in(0,1)$ and $u\in(0,1/e)$ be given. Then $v^{-\beta}\ln v\le 2u/\beta$ holds for all $v\ge(u^{-1}\ln(1/u))^{1/\beta}$.
\end{lemma}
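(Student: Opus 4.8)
The plan is to prove the elementary inequality $v^{-\beta}\ln v \le 2u/\beta$ for all $v \ge v_0 := (u^{-1}\ln(1/u))^{1/\beta}$ by a monotonicity argument. First I would observe that the function $g(v) := v^{-\beta}\ln v$ is eventually decreasing: computing $g'(v) = v^{-\beta-1}(1 - \beta\ln v)$, we see $g'(v) < 0$ as soon as $\ln v > 1/\beta$, i.e. $v > e^{1/\beta}$. So it suffices to check two things: (i) that $v_0 \ge e^{1/\beta}$, so that $g$ is decreasing on $[v_0,\infty)$, and hence $g(v) \le g(v_0)$ for all $v \ge v_0$; and (ii) that $g(v_0) \le 2u/\beta$.

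For (i), since $u \in (0,1/e)$ we have $\ln(1/u) > 1$, hence $u^{-1}\ln(1/u) > u^{-1} > e$, so $v_0 = (u^{-1}\ln(1/u))^{1/\beta} > e^{1/\beta}$, as needed. For (ii), substitute $v_0^{\beta} = u^{-1}\ln(1/u)$ and $\ln v_0 = \tfrac{1}{\beta}\ln(u^{-1}\ln(1/u)) = \tfrac{1}{\beta}(\ln(1/u) + \ln\ln(1/u))$. Thus
\[
g(v_0) = \frac{\ln v_0}{v_0^{\beta}} = \frac{u\,(\ln(1/u) + \ln\ln(1/u))}{\beta\,\ln(1/u)} = \frac{u}{\beta}\Big(1 + \frac{\ln\ln(1/u)}{\ln(1/u)}\Big).
\]
It remains to bound $\ln\ln(1/u)/\ln(1/u) \le 1$, which holds because $\ln t \le t$ for all $t > 0$ applied with $t = \ln(1/u)$ (and $\ln(1/u) > 1 > 0$ so the ratio is well-defined and the inequality is in the right direction). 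Hence $g(v_0) \le \tfrac{u}{\beta}(1+1) = 2u/\beta$, completing the argument.

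The only mildly delicate point — the ``main obstacle,'' though it is minor — is making sure all the logarithms are of quantities exceeding $1$ so that signs and the decreasing-ness of $g$ are under control; this is exactly where the hypothesis $u < 1/e$ (rather than merely $u < 1$) is used, since it guarantees $\ln(1/u) > 1$ and therefore both $v_0 > e^{1/\beta}$ and $\ln\ln(1/u) \ge 0$. No use of $\beta < 1$ is actually needed beyond $\beta > 0$, but stating it as in the lemma is harmless. I would present the proof in essentially the three displayed steps above: compute $g'$ and identify where $g$ decreases, verify $v_0$ lies in that region, then evaluate $g(v_0)$ and bound it using $\ln t \le t$.
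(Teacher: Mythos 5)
Your proof is correct and follows essentially the same route as the paper: both establish that $\phi(\tau)=\tau^{-\beta}\ln\tau$ is decreasing on $(e^{1/\beta},\infty)$, verify that $(u^{-1}\ln(1/u))^{1/\beta}>e^{1/\beta}$ using $u<1/e$, evaluate $\phi$ at that point to get $\tfrac{u}{\beta}\bigl(1+\tfrac{\ln\ln(1/u)}{\ln(1/u)}\bigr)$, and bound the ratio by $1$ via $\ln\ln(1/u)\le\ln(1/u)$. Your explicit computation of $g'$ and the remark on where the hypothesis $u<1/e$ is actually used are fine additions but do not change the argument.
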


\begin{proof}
Fix any $v$ satisfying $v\ge(u^{-1}\ln(1/u))^{1/\beta}$. It then follows from $u\in(0,1/e)$ that
\begin{align}\label{aux-uv-ln}
v\ge(u^{-1}\ln(1/u))^{1/\beta}> e^{1/\beta}.    
\end{align}
Let $\phi(\tau)= \tau^{-\beta}\ln \tau$. It can be verified that $\phi$ is decreasing on $(e^{1/\beta},\infty)$. By this and  \eqref{aux-uv-ln}, one has that 
\begin{align*}
v^{-\beta}\ln v = \phi(v) \le  \phi((u^{-1}\ln(1/u))^{1/\beta}) = \frac{u}{\beta} \Big(1+\frac{\ln\ln(1/u)}{\ln(1/u)}\Big)\le \frac{2u}{\beta},
\end{align*}
where the last inequality follows from $\ln\ln(1/u)\le\ln(1/u)$ due to $u\in(0,1/e)$. Hence, the conclusion of this lemma holds.
\end{proof}

We next establish a descent property for $f$ along a normalized direction.

\begin{lemma}\label{lem:ppt-desc}
Suppose that Assumption \ref{asp:basic} holds. Let $x,m\in\mathbb{R}^n$ and $\eta>0$ be given, and let $x^+=x-\eta m/\|m\|$. Then we have 
\begin{align*}
f(x^+) \le f(x) - \eta\|\nabla f(x)\| + 2\eta\|\nabla f(x) - m\| + \frac{L_1}{2}\eta^2,    
\end{align*}
where $L_1$ is given in Assumption \ref{asp:basic}(b).
\end{lemma}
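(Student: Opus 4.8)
The plan is to combine the standard descent inequality~\eqref{ineq:1st-desc} (a consequence of Assumption~\ref{asp:basic}(b)) with a two-step triangle-inequality estimate on the inner product between $\nabla f(x)$ and the normalized direction $m/\|m\|$. Throughout we implicitly assume $m\neq 0$ so that $m/\|m\|$ is well defined, which is all that is needed since the lemma is only invoked for such $m$.

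First I would apply~\eqref{ineq:1st-desc} with $y=x^+=x-\eta m/\|m\|$. Since $x^+-x=-\eta m/\|m\|$ has norm exactly $\eta$, this gives
\begin{align*}
f(x^+)\le f(x) - \eta\,\nabla f(x)^T\frac{m}{\|m\|} + \frac{L_1}{2}\eta^2.
\end{align*}
It thus suffices to show $-\nabla f(x)^T\frac{m}{\|m\|}\le -\|\nabla f(x)\| + 2\|\nabla f(x)-m\|$.

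To prove this inequality, write $\nabla f(x)=m+(\nabla f(x)-m)$ so that $\nabla f(x)^T\frac{m}{\|m\|} = \|m\| + (\nabla f(x)-m)^T\frac{m}{\|m\|} \ge \|m\| - \|\nabla f(x)-m\|$ by Cauchy--Schwarz. Hence $-\nabla f(x)^T\frac{m}{\|m\|}\le \|\nabla f(x)-m\| - \|m\|$. Then I would bound $\|m\|\ge \|\nabla f(x)\| - \|\nabla f(x)-m\|$ by the triangle inequality, yielding $-\nabla f(x)^T\frac{m}{\|m\|}\le 2\|\nabla f(x)-m\| - \|\nabla f(x)\|$. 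Substituting this back into the displayed inequality above completes the proof.

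There is no real obstacle here: the only mildly non-obvious ingredient is recognizing that the inner-product term must be split in two stages (once to introduce $\|m\|$, once to replace $\|m\|$ by $\|\nabla f(x)\|$), producing the factor $2$ on the error term. All other steps are immediate from Assumption~\ref{asp:basic}(b) and Cauchy--Schwarz.
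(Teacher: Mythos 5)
Your proof is correct and follows essentially the same route as the paper's: both apply the descent inequality \eqref{ineq:1st-desc} at $y=x^+$, split the inner product to isolate $\|m\|$ via Cauchy--Schwarz, and then convert $\|m\|$ to $\|\nabla f(x)\|$ by the triangle inequality, which is exactly where the factor $2$ arises. The only cosmetic difference is that the paper decomposes $\nabla f(x)^T(x^+-x)$ directly while you decompose $\nabla f(x)$ itself; the resulting estimates are identical.
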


\begin{proof}
Using \eqref{ineq:1st-desc} with $y=x^+$, we obtain that 
\begin{align*}
&f(x^+) \overset{\eqref{ineq:1st-desc}}{\le} f(x) +  \nabla f(x)^T(x^+ - x) + \frac{L_1}{2}\|x^+ - x\|^2\\
&= f(x) +  m^T(x^+ - x) + (\nabla f(x) - m)^T(x^+ - x) + \frac{L_1}{2}\|x^+ - x\|^2\\
&= f(x) -\eta \|m\| -\frac{\eta}{\|m\|}(\nabla f(x) - m)^Tm + \frac{L_1}{2}\eta^2 \le f(x) - \eta\|m\| + \eta \|\nabla f(x) - m\| + \frac{L_1}{2}\eta^2\\
&\le f(x) - \eta \|\nabla f(x)\| + 2\eta\|\nabla f(x) - m\| + \frac{L_1}{2}\eta^2 ,
\end{align*}
where the second equality follows from $x^+=x-\eta m/\|m\|$, the second inequality is due to the Cauchy-Schwarz inequality, and the last inequality follows from the triangular inequality $\|m\| \geq \|\nabla f(x)\| - \|\nabla f(x) - m\| $. Hence, this lemma holds as desired.
\end{proof}

The following lemma provides an upper bound on the residual of the $p$th-order Taylor expansion of $\nabla f$. 

\begin{lemma}\label{lem:grad-desc-near-high}
Suppose that Assumption \ref{asp:high-smooth} holds. Let $\Rp(\cdot,\cdot)$ and $L_p$ be given in \eqref{def:taylor-res} and Assumption \ref{asp:high-smooth}, respectively. Then it holds that $\|\Rp(y,x)\|\le L_p\|y-x\|^p/p!$ for all $x,y\in\mathbb{R}^n$.
\end{lemma}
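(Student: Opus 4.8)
The plan is to prove the bound $\|\Rp(y,x)\|\le L_p\|y-x\|^p/p!$ directly from the definition of the Taylor residual in \eqref{def:taylor-res} together with the Lipschitz continuity of the $p$th-order derivative in Assumption \ref{asp:high-smooth}. The natural route is to recognize $\Rp(y,x)$ as the remainder in the order-$p$ Taylor expansion of the vector-valued map $y\mapsto\nabla f(y)$ around $x$, and then apply the integral form of Taylor's theorem to each component, or more cleanly, to the directional function $\varphi(t):=\nabla f(x+t(y-x))$ for $t\in[0,1]$.

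First I would fix $x,y\in\R^n$, set $h=y-x$, and define $\varphi(t)=\nabla f(x+th)$, so that $\varphi$ is a $\R^n$-valued function that is $p$ times continuously differentiable (by Assumption \ref{asp:high-smooth}), with $\varphi^{(r)}(t)=\nabla^{r+1} f(x+th)(h)^r$ for $0\le r\le p-1$, where I interpret $\nabla^{r+1}f(\cdot)(h)^r$ via the multilinear-form notation set up after \eqref{def:pnorm}. Then the order-$(p-1)$ Taylor expansion of $\varphi$ at $t=0$ with integral remainder gives
\begin{align*}
\varphi(1)=\sum_{r=0}^{p-1}\frac{1}{r!}\varphi^{(r)}(0)+\frac{1}{(p-1)!}\int_0^1(1-s)^{p-1}\bigl(\varphi^{(p-1)}(s)-\varphi^{(p-1)}(0)\bigr)\,\mathrm{d}s,
\end{align*}
where I have subtracted off $\varphi^{(p-1)}(0)$ inside the integral using $\int_0^1(1-s)^{p-1}\mathrm{d}s=1/p$ to convert the last Taylor term into the residual form matching \eqref{def:taylor-res}. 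Matching $\varphi(1)=\nabla f(y)$ and $\frac{1}{r!}\varphi^{(r)}(0)=\frac{1}{r!}\nabla^{r+1}f(x)(h)^r=\frac{1}{((r+1)-1)!}\nabla^{r+1}f(x)(y-x)^{(r+1)-1}$, the sum over $r=0,\dots,p-1$ is exactly $\sum_{q=1}^p\frac{1}{(q-1)!}\nabla^q f(x)(y-x)^{q-1}$, so the left-over integral is precisely $\Rp(y,x)$.

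It then remains to bound the norm of that integral. I would use $\|\varphi^{(p-1)}(s)-\varphi^{(p-1)}(0)\|=\|\bigl(\Dp f(x+sh)-\Dp f(x)\bigr)[h]^{p-1}\|\le\|\Dp f(x+sh)-\Dp f(x)\|\cdot\|h\|^{p-1}\le L_p\|sh\|\cdot\|h\|^{p-1}=L_p s\|h\|^p$, invoking the operator-norm definition \eqref{def:pnorm} for symmetric multilinear forms and Assumption \ref{asp:high-smooth}. Plugging this in and computing $\frac{1}{(p-1)!}\int_0^1(1-s)^{p-1}s\,\mathrm{d}s=\frac{1}{(p-1)!}B(p,2)=\frac{1}{(p-1)!}\cdot\frac{(p-1)!\,1!}{(p+1)!}=\frac{1}{(p+1)p(p-1)!}$ — wait, this gives $1/((p+1)!/(p-1)!\cdot\text{stuff})$, which is smaller than $1/p!$, so in fact one gets a slightly sharper constant; but a cruder bound $s\le 1$ together with $\int_0^1(1-s)^{p-1}\mathrm{d}s=1/p$ already yields $\|\Rp(y,x)\|\le\frac{L_p\|h\|^p}{(p-1)!\,p}=\frac{L_p\|h\|^p}{p!}$, which is the claimed inequality. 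The main thing to be careful about is the bookkeeping of the multilinear-form notation and the index shift between $\varphi^{(r)}$ and $\nabla^{r+1}f$; once that is pinned down the estimate is routine.
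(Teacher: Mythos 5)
Your overall strategy is the same as the paper's (the paper merely scalarizes first, applying Taylor's theorem to $\phi(x)=\langle\nabla f(x),u\rangle$ and then maximizing over $\|u\|\le 1$, whereas you apply it to the vector-valued $\varphi(t)=\nabla f(x+th)$ directly; both are fine). However, your key identity is misstated, and the error propagates into the constant. Since $f$ is only $p$ times continuously differentiable, $\varphi$ is $(p-1)$ times continuously differentiable, so the correct Taylor formula with integral remainder is
\begin{align*}
\varphi(1)=\sum_{r=0}^{p-2}\frac{1}{r!}\varphi^{(r)}(0)+\frac{1}{(p-2)!}\int_0^1(1-s)^{p-2}\varphi^{(p-1)}(s)\,\mathrm{d}s
=\sum_{r=0}^{p-1}\frac{1}{r!}\varphi^{(r)}(0)+\frac{1}{(p-2)!}\int_0^1(1-s)^{p-2}\bigl(\varphi^{(p-1)}(s)-\varphi^{(p-1)}(0)\bigr)\mathrm{d}s,
\end{align*}
using $\int_0^1(1-s)^{p-2}\mathrm{d}s=1/(p-1)$ to absorb the subtracted term. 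Your version with $(1-s)^{p-1}$ and prefactor $1/(p-1)!$ is not an identity: for $p=2$ and $\varphi(s)=s^2$ it asserts $1=0+0+\int_0^1(1-s)\cdot 2s\,\mathrm{d}s=1/3$. (It would be the remainder formula involving $\varphi^{(p)}$, a derivative you are not entitled to use here.)

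The consequence is that your closing estimate is also off. With the corrected identity, the ``crude bound $s\le 1$'' gives only $\frac{1}{(p-2)!}\cdot\frac{1}{p-1}\,L_p\|h\|^p=\frac{L_p\|h\|^p}{(p-1)!}$, which is weaker than the claimed bound; you genuinely need to keep the factor $s$ coming from $\|\Dp f(x+sh)-\Dp f(x)\|\le L_p s\|h\|$ and compute $\int_0^1(1-s)^{p-2}s\,\mathrm{d}s=\frac{1}{p(p-1)}$, whence $\frac{1}{(p-2)!}\cdot\frac{1}{p(p-1)}=\frac{1}{p!}$, exactly as in the paper. The ``sharper constant'' $1/(p+1)!$ you briefly obtained should have been a warning sign, since $L_p\|y-x\|^p/p!$ is already tight (take $n=1$ and $f^{(p+1)}\equiv L_p$). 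Once the exponent and prefactor are corrected and the factor $s$ is retained, the rest of your argument (the index bookkeeping and the multilinear-norm bound via \eqref{def:pnorm}) is sound.
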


\begin{proof}
Fix any $u\in\R^n$. Let $\phi(x)=\langle \nabla f(x),u\rangle$. By this and the definition of $\nabla^{r+1} f(x)(h)^{r}$, one has
\begin{align}\label{pth-der-phi-f}
\mathcal{D}^r\phi(x)[v]^r = \langle \nabla^{r+1} f(x)(v)^r, u\rangle\qquad\forall 1\le r\le p-1, v\in\R^n.    
\end{align}
Using this and \eqref{def:pnorm}, we have
\begin{align}\label{p-p-1th-der-phi-f}
\|\mathcal{D}^{p-1}\phi(y)-\mathcal{D}^{p-1}\phi(x)\| \leq \|u\|\|\mathcal{D}^p f(y)-\mathcal{D}^p f(x)\|\qquad \forall x,y\in\R^n.
\end{align}
Fix any $x,y\in\R^n$. By Taylor's expansion, one has
\begin{align*}
&\phi(y) = \phi(x) + \sum_{r=1}^{p-2}\frac{1}{r!} \mathcal{D}^r\phi(x)[y-x]^r  + \frac{1}{(p-2)!} \int_0^1(1-t)^{p-2}\mathcal{D}^{p-1} \phi(x+t(y-x))[y-x]^{p-1}\mathrm{d}t \\
& = \phi(x) + \sum_{r=1}^{p-1}\frac{1}{r!} D^r\phi(x)[y-x]^r + \frac{1}{(p-2)!} \int_0^1(1-t)^{p-2}(\mathcal{D}^{p-1} \phi(x+t(y-x)) - \mathcal{D}^{p-1} \phi(x))[y-x]^{p-1}\mathrm{d}t.
\end{align*}
Using this, \eqref{def:pnorm}, \eqref{pth-der-phi-f}, and \eqref{p-p-1th-der-phi-f}, we obtain that 
\begin{align*}
&\Big|\Big\langle \nabla f(y) - \nabla f(x) - \sum_{r=1}^{p-1}\frac{1}{r!} \nabla^{r+1}f(x)(y-x)^r, u\Big\rangle \Big| \overset{\eqref{pth-der-phi-f}}{=} \Big|\phi(y) - \phi(x) - \sum_{r=1}^{p-1}\frac{1}{r!} \mathcal{D}^r\phi(x)[y-x]^r \Big|\\
&=\Big|\frac{1}{(p-2)!} \int_0^1(1-t)^{p-2}(\mathcal{D}^{p-1} \phi(x+t(y-x)) - \mathcal{D}^{p-1} \phi(x))[y-x]^{p-1}\mathrm{d}t\Big|\\
&\overset{\eqref{def:pnorm}}{\le} \frac{1}{(p-2)!}\|y-x\|^{p-1}\int^1_0(1-t)^{p-2}\|\mathcal{D}^{p-1} \phi(x+t(y-x)) - \mathcal{D}^{p-1} \phi(x)\|\mathrm{d}t\\
&\overset{\eqref{p-p-1th-der-phi-f}}{\leq} \frac{1}{(p-2)!}\|y-x\|^{p-1}\|u\|\int^1_0(1-t)^{p-2}\|\mathcal{D}^p f(x+t(y-x)) - \mathcal{D}^pf(x)\|\mathrm{d}t\\
&{\le} \frac{1}{(p-2)!}L_p\|y-x\|^p\|u\|\int^1_0(1-t)^{p-2}t\mathrm{d}t = \frac{1}{p!}L_p\|y-x\|^p\|u\|,
\end{align*}
where the last inequality follows from Assumption \ref{asp:high-smooth}, and the last equality is due to $\int^1_0(1-t)^{p-2}t\mathrm{d}t=1/(p(p-1))$. Taking the maximum of this inequality over all $u$ with $\|u\|\le 1$, we conclude that this lemma holds.    
\end{proof}

\subsection{Proof of the main results in Section \ref{subsec:nsgd-pm}}\label{subsec:pf-pm}

In this subsection, we first establish several technical lemmas and then use them to prove Theorems~\ref{thm:s-rate-pm} and \ref{thm:un-s-rate-pm}. The following lemma presents a recurrence relation for the estimation error of the gradient estimators $\{m^k\}$ generated by Algorithm \ref{alg:unf-sfom-pm}.

\begin{lemma}\label{lem:rec-pm}
Suppose that Assumption \ref{asp:basic} holds. Let $\{(x^k,m^k)\}$ be the sequence generated by Algorithm \ref{alg:unf-sfom-pm} with input parameters $\{(\eta_k,\theta_k)\}$. Then we have
\begin{align}
&\mathbb{E}_{\xi^{k+1}}[\|m^{k+1}-\nabla f(x^{k+1})\|^\alpha] \le (1-\theta_k)\|m^k-\nabla f(x^k)\|^\alpha + 3L_1^\alpha\eta_k^\alpha \theta_k^{1-\alpha} + 2\sigma^\alpha\theta_k^\alpha \qquad \forall k\ge0,\label{ineq:vr-pm}
\end{align}
where $L_1$, $\sigma$, and $\alpha$ are given in Assumption \ref{asp:basic}.
\end{lemma}

\begin{proof}
Fix any $k\ge0$. It follows from \eqref{update-mk-pm} that
\begin{align}
&m^{k+1} - \nabla f(x^{k+1}) \overset{\eqref{update-mk-pm}}{=} (1- \theta_k)m^k + \theta_k G(x^{k+1};\xi^{k+1}) - \nabla f(x^{k+1}) \nonumber\\
&= (1- \theta_k) (m^k-\nabla f(x^k)) + (1-\theta_k)(\nabla f(x^k)-\nabla f(x^{k+1})) + \theta_k (G(x^{k+1};\xi^{k+1}) - \nabla f(x^{k+1})). \label{rela-mk+1-pm}
\end{align}
Observe from Algorithm \ref{alg:unf-sfom-pm} and Assumption \ref{asp:basic} that $\|x^{k+1}-x^k\|=\eta_k$, $\E_{\xi^{k+1}}[G(x^{k+1};\xi^{k+1})-\nabla f(x^{k+1})]=0$, $\E_{\xi^{k+1}}[\|G(x^{k+1};\xi^{k+1})-\nabla f(x^{k+1})\|^\alpha]\le\sigma^\alpha$, and $\|\nabla f(x^k)-\nabla f(x^{k+1})\|\leq L_1 \eta_k$.
Using these, \eqref{open-alpha}, \eqref{open-alpha-2}, and \eqref{rela-mk+1-pm}, we obtain that for all $c>0$,
\begin{align}
&\mathbb{E}_{\xi^{k+1}}[\|m^{k+1}-\nabla f(x^{k+1})\|^\alpha]\nonumber \\
&\overset{\eqref{rela-mk+1-pm}}{=}\mathbb{E}_{\xi^{k+1}}[\|(1- \theta_k) (m^k-\nabla f(x^k)) + (1-\theta_k)(\nabla f(x^k)-\nabla f(x^{k+1})) + \theta_k (G(x^{k+1};\xi^{k+1}) - \nabla f(x^{k+1}))\|^\alpha]\nonumber\\
&\overset{ \eqref{open-alpha}}{\le} \|(1- \theta_k) (m^k-\nabla f(x^k))+ (1-\theta_k)(\nabla f(x^k)-\nabla f(x^{k+1}))\|^\alpha \nonumber\\
&\qquad + 2\mathbb{E}_{\xi^{k+1}}[\|\theta_k (G(x^{k+1};\xi^{k+1}) - \nabla f(x^{k+1}))\|^\alpha]\nonumber\\
&\overset{\eqref{open-alpha-2}}{\le} (1+c)(1-\theta_k)^\alpha\|m^k-\nabla f(x^k)\|^\alpha  + (2+(\alpha-1)^{\alpha-1}c^{1-\alpha})(1-\theta_k)^\alpha \|\nabla f(x^k)-\nabla f(x^{k+1})\|^\alpha + 2\sigma^\alpha\theta_k^\alpha\nonumber\\
&\le (1+c)(1-\theta_k)^\alpha\|m^k-\nabla f(x^k)\|^\alpha + L_1^\alpha(2+(\alpha-1)^{\alpha-1}c^{1-\alpha})(1-\theta_k)^\alpha\eta_k^\alpha + 2\sigma^\alpha\theta_k^\alpha,\label{vr-inter}
\end{align}
where the first inequality is due to \eqref{open-alpha} and $\E_{\xi^{k+1}}[G(x^{k+1};\xi^{k+1})-\nabla f(x^{k+1})]=0$, the second inequality is due to \eqref{open-alpha-2} and $\E_{\xi^{k+1}}[\|G(x^{k+1};\xi^{k+1})-\nabla f(x^{k+1})\|^\alpha]\le\sigma^\alpha$, and the last inequality follows from  $\|x^{k+1}-x^k\|=\eta_k$ and $\|\nabla f(x^k)-\nabla f(x^{k+1})\|\leq L_1 \eta_k$.

When $\theta_k = 1$, \eqref{ineq:vr-pm} clearly holds. For $\theta_k \in (0, 1)$, letting $c = (1 - \theta_k)^{1 - \alpha} - 1$ in \eqref{vr-inter}, and using the fact that $\alpha \in (1, 2]$, we have
\begin{align*}
c^{1-\alpha} & = ((1-\theta_k)^{1-\alpha} - 1)^{1-\alpha} = \Big(\frac{1}{(1-\theta_k)^{\alpha-1}} - 1\Big)^{1-\alpha} \le \Big(\frac{1}{1-(\alpha-1)\theta_k} - 1\Big)^{1-\alpha}\\
& =\Big(\frac{1-(\alpha-1)\theta_k}{(\alpha-1)\theta_k} \Big)^{\alpha-1} \le ((\alpha-1)\theta_k)^{1-\alpha},
\end{align*}
where the first inequality follows from $(1-\tau)^\beta\le 1-\beta\tau$ for all $\tau\in(-\infty,1)$ and $\beta\in[0,1]$. Combining this inequality with \eqref{vr-inter}, one has 
\begin{align*}
\mathbb{E}_{\xi^{k+1}}[\|m^{k+1}-\nabla f(x^{k+1})\|^\alpha]\le (1-\theta_k)\|m^k-\nabla f(x^k)\|^\alpha + L_1^\alpha(2+\theta_k^{1-\alpha})(1-\theta_k)^\alpha\eta_k^\alpha + 2\sigma^\alpha\theta_k^\alpha,
\end{align*}
which together with $\theta_k\in(0,1]$ and $\alpha\in(1,2]$ implies that \eqref{ineq:vr-pm} holds.
\end{proof}

The following lemma establishes a descent property for the potential sequence $\{\mathcal{P}_k\}$ defined below.

\begin{lemma}\label{lem:rate-pm}
Suppose that Assumption \ref{asp:basic} holds. Let $\{(x^k,m^k)\}$ be the sequence generated by Algorithm \ref{alg:unf-sfom-pm} with input parameters $\{(\eta_k,\theta_k)\}$, and $L_1$, $\sigma$, and $\alpha$ be given in Assumption \ref{asp:basic}, and let $\{{\mathcal P}_k\}$ be defined in \eqref{def:pot-pm} for $\{(x^k,m^k)\}$ and any nonincreasing positive sequence $\{p_k\}$. Then it holds that for all $k\ge0$, 
\begin{align}\label{stat-bd-pm}
\mathbb{E}_{\xi^{k+1}}[{\mathcal P}_{k+1}] \le {\mathcal P}_k- \eta_k\|\nabla f(x^k)\| + \frac{L_1}{2}\eta_k^2
+ \frac{(\alpha-1)(2\eta_k/\alpha)^{\alpha/(\alpha-1)}}{(\theta_kp_k)^{1/(\alpha-1)}}  
+  3L_1^\alpha \theta_k^{1-\alpha}\eta_k^\alpha p_k + 2\sigma^\alpha\theta_k^\alpha p_k.
\end{align}
\end{lemma}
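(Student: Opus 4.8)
The plan is to chain together the two estimates already in hand: the one-step descent inequality of Lemma~\ref{lem:ppt-desc} for the objective $f$, and the error recursion \eqref{ineq:vr-pm} for $\mathbb{E}_{\xi^{k+1}}[\|m^{k+1}-\nabla f(x^{k+1})\|^\alpha]$, and then to dispose of the resulting cross term $2\eta_k\|\nabla f(x^k)-m^k\|$ by folding it into the $\alpha$-power error term of $\mathcal{P}_k$ via Young's inequality, exploiting the slack created by the factor $\theta_k$ in \eqref{ineq:vr-pm} together with the monotonicity of $\{p_k\}$.

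Concretely, I would fix $k\ge0$ and first observe that $m^k$, and hence $x^{k+1}=x^k-\eta_k m^k/\|m^k\|$, depend only on $\xi^0,\dots,\xi^k$, so $f(x^{k+1})$, $m^k$, and $\nabla f(x^k)$ are all deterministic with respect to $\xi^{k+1}$. Applying Lemma~\ref{lem:ppt-desc} with $(x,m,\eta)=(x^k,m^k,\eta_k)$ bounds $f(x^{k+1})$, while taking $\mathbb{E}_{\xi^{k+1}}$ of $\mathcal{P}_{k+1}=f(x^{k+1})+p_{k+1}\|m^{k+1}-\nabla f(x^{k+1})\|^\alpha$ and invoking \eqref{ineq:vr-pm} on the second summand yields
\[
\mathbb{E}_{\xi^{k+1}}[\mathcal{P}_{k+1}]\le f(x^k)-\eta_k\|\nabla f(x^k)\|+2\eta_k\|\nabla f(x^k)-m^k\|+\frac{L_1}{2}\eta_k^2+p_{k+1}(1-\theta_k)\|m^k-\nabla f(x^k)\|^\alpha+p_{k+1}\big(3L_1^\alpha\eta_k^\alpha\theta_k^{1-\alpha}+2\sigma^\alpha\theta_k^\alpha\big).
\]
Next, using $p_{k+1}\le p_k$, I would replace $p_{k+1}$ by $p_k$ in the last parenthesized group of error terms and bound $p_{k+1}(1-\theta_k)\le p_k(1-\theta_k)=p_k-p_k\theta_k$. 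It then remains to show $2\eta_k\|\nabla f(x^k)-m^k\|\le p_k\theta_k\|\nabla f(x^k)-m^k\|^\alpha+(\alpha-1)(2\eta_k/\alpha)^{\alpha/(\alpha-1)}(p_k\theta_k)^{-1/(\alpha-1)}$; after that, regrouping terms and identifying $f(x^k)+p_k\|m^k-\nabla f(x^k)\|^\alpha=\mathcal{P}_k$ reproduces \eqref{stat-bd-pm} verbatim.

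The one genuinely computational point---and the place where the only care is needed---is that last display, which is Young's inequality $ab\le a^\alpha/\alpha+b^{\alpha^\prime}/\alpha^\prime$ with conjugate exponent $\alpha^\prime=\alpha/(\alpha-1)$, applied to $a=(\alpha p_k\theta_k)^{1/\alpha}\|\nabla f(x^k)-m^k\|$ and $b=2\eta_k(\alpha p_k\theta_k)^{-1/\alpha}$; checking that $a^\alpha/\alpha=p_k\theta_k\|\nabla f(x^k)-m^k\|^\alpha$ and that $b^{\alpha^\prime}/\alpha^\prime$ collapses to $(\alpha-1)(2\eta_k/\alpha)^{\alpha/(\alpha-1)}(p_k\theta_k)^{-1/(\alpha-1)}$ uses only $\alpha^\prime/\alpha=1/(\alpha-1)$ and $\alpha^{\alpha/(\alpha-1)}/\alpha=\alpha^{1/(\alpha-1)}$, which together make the numerical prefactor exactly $\alpha-1$ and line up the powers of $\eta_k$, $p_k$, $\theta_k$. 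I expect this calibration of Young's inequality to be the main (indeed, essentially the only) obstacle; everything else is routine bookkeeping. It is also worth noting the degenerate case $\theta_k=1$, where the term $p_{k+1}(1-\theta_k)\|m^k-\nabla f(x^k)\|^\alpha$ simply vanishes and the inequality holds trivially, while $p_k\theta_k>0$ keeps the Young constant well defined; no assumptions on $\{p_k\}$ beyond being positive and nonincreasing are needed.
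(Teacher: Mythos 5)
Your proposal is correct and follows essentially the same route as the paper's proof: apply Lemma~\ref{lem:ppt-desc} and the recursion \eqref{ineq:vr-pm}, use monotonicity of $\{p_k\}$ to pass from $p_{k+1}$ to $p_k$, and absorb the cross term $2\eta_k\|\nabla f(x^k)-m^k\|$ via Young's inequality with exactly the calibration $a=(\alpha\theta_k p_k)^{1/\alpha}\|\nabla f(x^k)-m^k\|$, $b=2\eta_k/(\alpha\theta_k p_k)^{1/\alpha}$ that the paper uses. The bookkeeping and the resulting constants match \eqref{stat-bd-pm} verbatim.
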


\begin{proof}
Fix any $k\ge0$. By Lemma \ref{lem:ppt-desc} with $(x^+,x,m,\eta)=(x^{k+1},x^k,m^k,\eta_k)$, one has
\begin{align}\label{upbd-fxk+1-pm}
f(x^{k+1}) \le f(x^k) - \eta_k\|\nabla f(x^k)\| + 2\eta_k\|\nabla f(x^k) - m^k\| + \frac{L_1}{2}\eta_k^2.    
\end{align}
Combining this with \eqref{def:pot-pm} and \eqref{ineq:vr-pm}, we obtain that
\begin{align}
&\mathbb{E}_{\xi^{k+1}}[{\mathcal P}_{k+1}] \overset{\eqref{def:pot-pm}}{=} \mathbb{E}_{\xi^{k+1}}[f(x^{k+1}) + p_{k+1}\|m^{k+1} - \nabla f(x^{k+1})\|^\alpha]\nonumber\\
&\overset{\eqref{ineq:vr-pm}\eqref{upbd-fxk+1-pm}}{\le} f(x^k) - \eta_k\|\nabla f(x^k)\| + 2\eta_k\|\nabla f(x^k) - m^k\| + \frac{L_1}{2}\eta_k^2\nonumber \\
&\qquad + (1-\theta_k)p_{k+1}\|m^k-\nabla f(x^k)\|^\alpha + 3L_1^\alpha \theta_k^{1-\alpha}\eta_k^\alpha p_{k+1} + 2\sigma^\alpha\theta_k^\alpha p_{k+1}\nonumber\\
&\le f(x^k) - \eta_k\|\nabla f(x^k)\| + 2\eta_k\|\nabla f(x^k) - m^k\| + \frac{L_1}{2}\eta_k^2\nonumber\\
&\quad + (1-\theta_k)p_k\|m^k-\nabla f(x^k)\|^\alpha + 3L_1^\alpha \theta_k^{1-\alpha}\eta_k^\alpha p_k + 2\sigma^\alpha\theta_k^\alpha p_k,\label{pot-desc-ineq-pm}
\end{align}
where the last inequality follows from the fact that $\{p_k\}$ is nonincreasing. In addition, letting $\alpha^\prime=\alpha/(\alpha-1)$ and using the Young's inequality, we have
\begin{align*}
2\eta_k\|\nabla f(x^k) - m^k\| & \le \frac{\big((\alpha\theta_k p_k)^{1/\alpha} \|\nabla f(x^k) - m^k\|\big)^\alpha}{\alpha} + \frac{\big(2\eta_k/(\alpha\theta_k p_k)^{1/\alpha}\big)^{\alpha^\prime}}{\alpha^\prime}\\
&= \theta_k p_k \|\nabla f(x^k) - m^k\|^\alpha + \frac{(\alpha-1)(2\eta_k)^{\alpha/(\alpha-1)}}{\alpha^{\alpha/(\alpha-1)}(\theta_kp_k)^{1/(\alpha-1)}}.
\end{align*}
This together with \eqref{pot-desc-ineq-pm} implies that
\begin{align*}
&\mathbb{E}_{\xi^{k+1}}[{\mathcal P}_{k+1}]  \le f(x^k) + p_k\|m^k-\nabla f(x^k)\|^\alpha - \eta_k\|\nabla f(x^k)\| + \frac{L_1}{2}\eta_k^2 \\
&\qquad\qquad\qquad + \frac{(\alpha-1)(2\eta_k)^{\alpha/(\alpha-1)}}{\alpha^{\alpha/(\alpha-1)}(\theta_kp_k)^{1/(\alpha-1)}} + 3L_1^\alpha \theta_k^{1-\alpha}\eta_k^\alpha p_k + 2\sigma^\alpha\theta_k^\alpha p_k.
\end{align*}
The conclusion  \eqref{stat-bd-pm} then follows from this and  \eqref{def:pot-pm}.
\end{proof}

We are now ready to prove Theorem \ref{thm:s-rate-pm}.

\begin{proof}[\textbf{Proof of Theorem \ref{thm:s-rate-pm}}]
Let $\{(x^k,m^k)\}$ be generated by Algorithm \ref{alg:unf-sfom-pm} with $\{(\eta_k,\theta_k)\}$ given in \eqref{pm-eta-theta}, and 
$\{{\mathcal P}_k\}$ be defined in \eqref{def:pot-pm} with such $\{(x^k,m^k)\}$ and the following $\{p_k\}$:
\begin{align}\label{def:pk-pm}
p_k= (k+1)^{(\alpha^2-3\alpha+2)/(3\alpha-2)} \qquad\forall k\ge0.  
\end{align}
Since $\alpha\in(1,2]$, one can see that $\{p_k\}$ is nonincreasing. Also, observe from \eqref{pm-eta-theta} that $\{\eta_k\}\subset(0,+\infty)$ and $\{\theta_k\}\subset(0,1]$. Hence, $\{(\eta_k,\theta_k,p_k)\}$ satisfies the assumptions in Lemma \ref{lem:rate-pm} and Algorithm \ref{alg:unf-sfom-pm}. In addition, by \eqref{def:pot-pm} and \eqref{def:pk-pm}, one has that
\begin{align}
&\mathbb{E}[{\mathcal P}_0]= f(x^0)+p_0\mathbb{E}[\|m^0-\nabla f(x^0)\|^\alpha]= f(x^0) + \mathbb{E}[\|G(x^0;\xi^0)-\nabla f(x^0)\|^\alpha]\le f(x^0)+\sigma^\alpha,\label{upbd-exp-P0-pm}\\    
&\mathbb{E}[{\mathcal P}_K]= \mathbb{E}[f(x^K)+p_K\|m^K-\nabla f(x^K)\|^\alpha]\ge \mathbb{E}[f(x^K)] \ge f_{\mathrm{low}}.\label{lwbd-exp-Pk-pm}
\end{align}
Taking expectation on both sides of \eqref{stat-bd-pm} with respect to $\{\xi^i\}_{i=0}^{k+1}$, we have
\begin{align*}
\mathbb{E}[{\mathcal P}_{k+1}] \le \mathbb{E}[{\mathcal P}_k]- \eta_k\mathbb{E}[\|\nabla f(x^k)\|] + \frac{L_1}{2}\eta_k^2
+ \frac{(\alpha-1)(2\eta_k/\alpha)^{\alpha/(\alpha-1)}}{(\theta_kp_k)^{1/(\alpha-1)}}  
+  3L_1^\alpha \theta_k^{1-\alpha}\eta_k^\alpha p_k + 2\sigma^\alpha\theta_k^\alpha p_k\quad\forall k\ge0.
\end{align*}
Summing up this inequality over $k=0,\ldots,K-1$, and using \eqref{upbd-exp-P0-pm} and \eqref{lwbd-exp-Pk-pm}, we obtain that for all $K\ge1$,
\begin{align}
&f_{\mathrm{low}}\overset{\eqref{lwbd-exp-Pk-pm}}{\le} \mathbb{E}[{\mathcal P}_K]\nonumber \\
&\le \E[{\mathcal P}_0] - \sum_{k=0}^{K-1}\eta_k\mathbb{E}[\|\nabla f(x^k)\|] + \sum_{k=0}^{K-1}\Big(\frac{L_1}{2}\eta_k^2 + \frac{(\alpha-1)(2\eta_k)^{\alpha/(\alpha-1)}}{\alpha^{\alpha/(\alpha-1)}(\theta_kp_k)^{1/(\alpha-1)}} + 3L_1^\alpha \theta_k^{1-\alpha}\eta_k^\alpha p_k + 2\sigma^\alpha\theta_k^\alpha p_k\Big)\nonumber\\
&\overset{\eqref{upbd-exp-P0-pm}}{\le} f(x^0) + \sigma^\alpha - \eta_{K-1}\sum_{k=0}^{K-1}\mathbb{E}[\|\nabla f(x^k)\|] \nonumber\\
&\qquad + \sum_{k=0}^{K-1}\Big(\frac{L_1}{2}\eta_k^2 + \frac{(\alpha-1)(2\eta_k)^{\alpha/(\alpha-1)}}{\alpha^{\alpha/(\alpha-1)}(\theta_kp_k)^{1/(\alpha-1)}} + 3L_1^\alpha \theta_k^{1-\alpha}\eta_k^\alpha p_k + 2\sigma^\alpha\theta_k^\alpha p_k\Big),\label{rear-pot-desc-pm}
\end{align}
where the last inequality follows from \eqref{upbd-exp-P0-pm} and the fact that $\{\eta_k\}$ is nonincreasing. Rearranging the terms in 
\eqref{rear-pot-desc-pm}, and using \eqref{M1a}, \eqref{pm-eta-theta}, and \eqref{def:pk-pm}, we obtain that for all $K\ge3$, 
\begin{align*}
&\frac{1}{K}\sum_{k=0}^{K-1}\mathbb{E}[\|\nabla f(x^k)\|]\\
&\overset{\eqref{rear-pot-desc-pm}}{\le}\frac{f(x^0) - f_{\mathrm{low}} + \sigma^\alpha}{K\eta_{K-1}} + \frac{1}{K\eta_{K-1}}\sum_{k=0}^{K-1}\Big(\frac{L_1}{2}\eta_k^2+\frac{(\alpha-1)(2\eta_k)^{\alpha/(\alpha-1)}}{\alpha^{\alpha/(\alpha-1)}(\theta_kp_k)^{1/(\alpha-1)}} + 3L_1^\alpha \theta_k^{1-\alpha}\eta_k^\alpha p_k + 2\sigma^\alpha\theta_k^\alpha p_k\Big)  \\
&\overset{\eqref{pm-eta-theta}\eqref{def:pk-pm}}{=} \frac{f(x^0) - f_{\mathrm{low}} + \sigma^\alpha}{K^{(\alpha-1)/(3\alpha-2)}} \\
&\qquad +\frac{1}{K^{(\alpha-1)/(3\alpha-2)}}\sum_{k=0}^{K-1}\Big(\frac{L_1}{2(k+1)^{2(2\alpha-1)/(3\alpha-2)}} + \frac{(\alpha-1)(2/\alpha)^{\alpha/(\alpha-1)} + 3L_1^\alpha+2\sigma^\alpha}{k+1}\Big) \\
&\le\frac{2(f(x^0) - f_{\mathrm{low}} + \sigma^\alpha + L_1 + (\alpha-1)(2/\alpha)^{\alpha/(\alpha-1)} + 3L_1^\alpha+2\sigma^\alpha)\ln K}{K^{(\alpha-1)/(3\alpha-2)}}\overset{\eqref{M1a}}{=}\frac{M_{1,\alpha}\ln K}{K^{(\alpha-1)/(3\alpha-2)}},
\end{align*}
where the second inequality follows from $\sum_{k=0}^{K-1}1/(k+1)\le 2\ln K$ due to \eqref{upbd:series-ka} and $K\ge 3$, and $\sum_{k=0}^{K-1}1/(k+1)^{2(2\alpha-1)/(3\alpha-2)}\le(3\alpha-2) 2^{\alpha/(3\alpha-2)}/\alpha < 4$ due to \eqref{upbd:series-ka} and $(3\alpha-2)/\alpha \in (1,2]$. Recall that $\iota_K$ is uniformly selected from $\{0,\ldots,K-1\}$. It then follows from this and the above inequality that
\begin{align}\label{pre-upbd-pm}
\E[\|\nabla f(x^{\iota_K})\|] = \frac{1}{K}\sum_{k=0}^{K-1}\mathbb{E}[\|\nabla f(x^k)\|] \le \frac{M_{1,\alpha}\ln K}{K^{(\alpha-1)/(3\alpha-2)}}\qquad\forall K\ge 3.
\end{align}
In addition, by Lemma \ref{lem:rate-complexity} with $(\beta,u,v)=((\alpha-1)/(3\alpha-2),(\alpha-1)\epsilon/(2(3\alpha-2)M_{1,\alpha}),K)$, one can see that 
\begin{align*}
K^{-(\alpha-1)/(3\alpha-2)}\ln K\le \frac{\epsilon}{M_{1,\alpha}}\qquad\forall K\ge\Big(\frac{2(3\alpha-2)M_{1,\alpha}}{(\alpha-1)\epsilon}\ln\Big(\frac{2(3\alpha-2)M_{1,\alpha}}{(\alpha-1)\epsilon}\Big)\Big)^{(3\alpha-2)/(\alpha-1)},    
\end{align*}
which together with \eqref{pre-upbd-pm} implies that Theorem \ref{thm:s-rate-pm} holds.
\end{proof}

We next prove Theorem \ref{thm:un-s-rate-pm}.

\begin{proof}[\textbf{Proof of Theorem \ref{thm:un-s-rate-pm}}]
Let $\{(x^k,m^k)\}$ be generated by Algorithm \ref{alg:unf-sfom-pm} with $\{(\eta_k,\theta_k)\}$ given in \eqref{pm-eta-theta-o}, and 
$\{{\mathcal P}_k\}$ be defined in \eqref{def:pot-pm} with such $\{(x^k,m^k)\}$ and the following $\{p_k\}$:
\begin{align}\label{pk-pm-unk}
p_k= (k+1)^{(2\alpha^2-5\alpha+2)/(4\alpha)}\qquad\forall k\ge0.    
\end{align}
Since $\alpha\in(1,2]$, one can see that $\{p_k\}$ is nonincreasing. Also, observe from \eqref{pm-eta-theta-o} that $\{\eta_k\}\subset(0,+\infty)$ and $\{\theta_k\}\subset(0,1]$. Hence, $\{(\eta_k,\theta_k,p_k)\}$ defined in \eqref{pm-eta-theta-o} and \eqref{pk-pm-unk} satisfies the assumptions in Lemma \ref{lem:rate-pm} and Algorithm \ref{alg:unf-sfom-pm}. By \eqref{pm-eta-theta-o}, \eqref{pk-pm-unk}, and similar arguments as those for deriving \eqref{rear-pot-desc-pm}, one has that for all $K\ge1$, 
\begin{align*}
f_{\mathrm{low}} & \le f(x^0) + \sigma^\alpha - \eta_{K-1}\sum_{k=0}^{K-1}\mathbb{E}[\|\nabla f(x^k)\|] \\
&\qquad + \sum_{k=0}^{K-1}\Big(\frac{L_1}{2}\eta_k^2 + \frac{(\alpha-1)(2\eta_k)^{\alpha/(\alpha-1)}}{\alpha^{\alpha/(\alpha-1)}(\theta_kp_k)^{1/(\alpha-1)}} + 3L_1^\alpha \theta_k^{1-\alpha}\eta_k^\alpha p_k + 2\sigma^\alpha\theta_k^\alpha p_k\Big).
\end{align*}
Rearranging the terms of this inequality, and using \eqref{M1a-o}, \eqref{pm-eta-theta-o}, and \eqref{pk-pm-unk}, we obtain that for all $K\ge3$,
\begin{align*}
&\frac{1}{K}\sum_{k=0}^{K-1}\mathbb{E}[\|\nabla f(x^k)\|]\\
&\le \frac{f(x^0) - f_{\mathrm{low}} + \sigma^\alpha}{K\eta_{K-1}} + \frac{1}{K\eta_{K-1}}\sum_{k=0}^{K-1}\Big(\frac{L_1}{2}\eta_k^2+\frac{(\alpha-1)(2\eta_k)^{\alpha/(\alpha-1)}}{\alpha^{\alpha/(\alpha-1)}(\theta_kp_k)^{1/(\alpha-1)}} + 3L_1^\alpha \theta_k^{1-\alpha}\eta_k^\alpha p_k + 2\sigma^\alpha\theta_k^\alpha p_k\Big)\\
&\overset{\eqref{pm-eta-theta-o}\eqref{pk-pm-unk}}{=}\frac{f(x^0) - f_{\mathrm{low}} + \sigma^\alpha}{K^{1/4}} \\
& \qquad + \frac{1}{K^{1/4}}\sum_{k=0}^{K-1}\Big(\frac{L_1}{2(k+1)^{3/2}} + \frac{(\alpha-1)(2/\alpha)^{\alpha/(\alpha-1)} + 2\sigma^\alpha}{(k+1)^{(5\alpha-2)/(4\alpha)}}  + \frac{3L_1^\alpha}{(k+1)^{(7\alpha - \alpha^2 - 2)/(4\alpha)}}\Big)\\
&\le\frac{f(x^0) - f_{\mathrm{low}} + \sigma^\alpha}{K^{1/4}} + \frac{1}{K^{1/4}}\sum_{k=0}^{K-1}\Big(\frac{L_1/2 + 3L_1^\alpha + ((\alpha-1)(2/\alpha)^{\alpha/(\alpha-1)} + 2\sigma^\alpha)K^{(2-\alpha)/(4\alpha)}}{k+1} \Big) \\
&\le \frac{2(f(x^0) - f_{\mathrm{low}} + \sigma^\alpha + L_1/2 + 3L_1^\alpha)\ln K}{K^{1/4}} + \frac{2((\alpha-1)(2/\alpha)^{\alpha/(\alpha-1)} + 2\sigma^\alpha)\ln K}{K^{(\alpha-1)/(2\alpha)}} \\
&\overset{\eqref{M1a-o}}{=}\frac{\widetilde{M}_{1,\alpha}\ln K}{K^{1/4}} + \frac{\widehat{M}_{1,\alpha}\ln K}{K^{(\alpha-1)/(2\alpha)}},
\end{align*}
where the second inequality follows from $(5\alpha-2)/(4\alpha)\le1$ and $(7\alpha - \alpha^2 - 2)/(4\alpha)\ge1$ due to $\alpha\in(1,2]$, and the last inequality follows from $\sum_{k=0}^{K-1}1/(k+1)\le 2\ln K$ due to \eqref{upbd:series-ka} and $K\ge 3$. Recall that $\iota_K$ is uniformly selected from $\{0,\ldots,K-1\}$. It follows from this and the above inequality that
\begin{align}\label{ave-bd-unknow-pm}
\E[\|\nabla f(x^{\iota_K})\|] = \frac{1}{K}\sum_{k=0}^{K-1}\mathbb{E}[\|\nabla f(x^k)\|] \le \frac{\widetilde{M}_{1,\alpha}\ln K}{K^{1/4}} + \frac{\widehat{M}_{1,\alpha}\ln K}{K^{(\alpha-1)/(2\alpha)}}\qquad\forall K\ge 3.
\end{align}
In addition, by Lemma \ref{lem:rate-complexity} with $(\beta,u,v)=(1/4,\epsilon/(16\widetilde{M}_{1,\alpha}),K)$ and $(\beta,u,v)=((\alpha-1)/(2\alpha),(\alpha-1)\epsilon/(8\alpha\widehat{M}_{1,\alpha}),K)$, one can see that
\begin{align*}
&K^{-1/4}\ln K\le \frac{\epsilon}{2\widetilde{M}_{1,\alpha}}\qquad\forall K\ge\Big(\frac{16\widetilde{M}_{1,\alpha}}{\epsilon}\ln\Big(\frac{16\widetilde{M}_{1,\alpha}}{\epsilon}\Big)\Big)^{4},\\
&K^{-(\alpha-1)/(2\alpha)}\ln K\le \frac{\epsilon}{2\widehat{M}_{1,\alpha}}\qquad\forall K\ge\Big(\frac{8\alpha\widehat{M}_{1,\alpha}}{(\alpha-1)\epsilon}\ln\Big(\frac{8\alpha\widehat{M}_{1,\alpha}}{(\alpha-1)\epsilon}\Big)\Big)^{2\alpha/(\alpha-1)},
\end{align*}
which together with \eqref{ave-bd-unknow-pm} imply that Theorem \ref{thm:un-s-rate-pm} holds.
\end{proof}

\subsection{Proof of the main results in Section \ref{subsec:nsgd-mem}}\label{subsec:pf-mem}

In this subsection, we first establish several technical lemmas and then use them to prove Theorems \ref{thm:known-rate-mem} and \ref{thm:unknown-rate-mem}.  

Before proceeding, we present the well-known Weierstrass product inequalities (see, e.g., \cite{klamkin1970extensions}). For any given $\{a_t\}_{t=1}^m\subset(0,1)$, it holds that 
\begin{align}\label{W-ineq}
1 - \sum_{t=1}^m a_t  \le \prod_{t=1}^m(1-a_t) \le \frac{1}{1 + \sum_{t=1}^m a_t}.
\end{align}

We next present an auxiliary lemma that will be used subsequently.

\begin{lemma}\label{lem:inf-prod-tool}
$\prod_{s\ge1,s\neq t}(1- t^2/s^2)=(-1)^{t-1}/2$ holds for any positive integer $t$.
\end{lemma}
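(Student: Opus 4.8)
The statement to prove is the identity $\prod_{s\ge1,\,s\neq t}(1-t^2/s^2) = (-1)^{t-1}/2$ for any positive integer $t$.

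\medskip

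\textbf{Proof proposal.} The plan is to recognize the infinite product as a specialization of the classical Euler product formula for the sine function,
\[
\frac{\sin(\pi z)}{\pi z} = \prod_{s\ge1}\Big(1 - \frac{z^2}{s^2}\Big),
\]
which converges for all $z\in\mathbb{C}$. The subtlety is that we want to evaluate this at $z=t$, an integer, where the factor indexed by $s=t$ vanishes; the product in the statement simply omits that factor. So first I would isolate the $s=t$ term: for $z$ near $t$ but $z\ne t$,
\[
\prod_{s\ge1}\Big(1-\frac{z^2}{s^2}\Big) = \Big(1 - \frac{z^2}{t^2}\Big)\prod_{s\ge1,\,s\neq t}\Big(1-\frac{z^2}{s^2}\Big),
\]
so that
\[
\prod_{s\ge1,\,s\neq t}\Big(1-\frac{z^2}{s^2}\Big) = \frac{\sin(\pi z)}{\pi z\,(1 - z^2/t^2)}.
\]
Then I would take the limit $z\to t$. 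The numerator $\sin(\pi z)$ vanishes at $z=t$, and so does the factor $1-z^2/t^2$ in the denominator, so this is a $0/0$ limit that I would resolve either by L'Hôpital's rule or by writing $\sin(\pi z) = \sin(\pi(z-t)+\pi t) = (-1)^t\sin(\pi(z-t))$ and using $\sin(\pi(z-t))\sim \pi(z-t)$ together with $1 - z^2/t^2 = -(z-t)(z+t)/t^2 \sim -2(z-t)/t$ as $z\to t$.

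\medskip

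Carrying out that computation: as $z\to t$,
\[
\frac{\sin(\pi z)}{\pi z\,(1-z^2/t^2)} = \frac{(-1)^t\sin(\pi(z-t))}{\pi z\cdot\big(-(z-t)(z+t)/t^2\big)} \longrightarrow \frac{(-1)^t\cdot \pi(z-t)}{\pi t \cdot \big(-2(z-t)/t\big)} = \frac{(-1)^t}{-2} = \frac{(-1)^{t-1}}{2},
\]
which is exactly the claimed value. The only point requiring a little care is the interchange of the limit $z\to t$ with the truncated infinite product $\prod_{s\neq t}$: since the tail of the product converges uniformly on a neighbourhood of $z=t$ (all the omitted-factor issues having been removed), the partial products converge uniformly there, so the product is continuous at $z=t$ and the limit may be taken termwise. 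I expect this justification of continuity—or equivalently, citing a standard reference for the uniform convergence of the Euler sine product on compact sets—to be the main (minor) obstacle; everything else is routine.

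\medskip

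An alternative, purely elementary route that avoids analytic facts entirely is to compute the finite partial product directly. For $N > t$,
\[
\prod_{\substack{1\le s\le N\\ s\neq t}}\Big(1-\frac{t^2}{s^2}\Big) = \prod_{\substack{1\le s\le N\\ s\neq t}}\frac{(s-t)(s+t)}{s^2},
\]
and one can telescope the two factors: $\prod_{s\neq t}(s-t)$ over $1\le s\le N$ equals $(-1)^{t-1}(t-1)!\,(N-t)!$ (splitting into $s<t$ and $s>t$), $\prod_{s\neq t}(s+t) = \frac{(N+t)!}{(2t)\,t!}$, and $\prod_{s\neq t}s^2 = \big(N!/t\big)^2$. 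Combining these gives a closed form in $N$ that simplifies, after cancellation, to $\frac{(-1)^{t-1}}{2}\cdot\frac{(N+t)!\,(N-t)!}{(N!)^2}$, and the ratio $\frac{(N+t)!\,(N-t)!}{(N!)^2} = \prod_{j=1}^{t}\frac{N+j}{N-t+j} \to 1$ as $N\to\infty$. I would likely present this elementary telescoping argument as the main proof, since it keeps the paper self-contained, and mention the sine-product viewpoint as a remark; the bookkeeping in the factorial manipulation is the only place to be careful about off-by-one indexing.
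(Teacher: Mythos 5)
Your proposal is correct, and it actually contains two valid arguments. Your first route---writing $\prod_{s\neq t}(1-z^2/s^2)=\sin(\pi z)/\bigl(\pi z(1-z^2/t^2)\bigr)$ via the Euler sine product and resolving the $0/0$ limit as $z\to t$---is essentially identical to the paper's proof, which uses L'H\^opital's rule to get $\lim_{a\to t}\cos(\pi a)/(1-3a^2/t^2)=(-1)^{t-1}/2$ and then, exactly as you anticipate, devotes the bulk of the argument to justifying the passage to the limit by proving uniform convergence of the partial products $\phi_r$ on $[0,2t]$ (so that $\phi$ is continuous at $a=t$). Your second, elementary route is genuinely different from the paper and arguably cleaner: the finite partial product telescopes to $\frac{(-1)^{t-1}}{2}\cdot\frac{(N+t)!\,(N-t)!}{(N!)^2}$, and the ratio $\prod_{j=1}^{t}\frac{N+j}{N-t+j}\to1$, so the value $(-1)^{t-1}/2$ drops out with no appeal to the sine product and no limit-interchange issue at all---the continuity/uniform-convergence machinery that occupies most of the paper's proof simply disappears. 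I checked the factorial bookkeeping ($\prod_{s\neq t}(s-t)=(-1)^{t-1}(t-1)!\,(N-t)!$, $\prod_{s\neq t}(s+t)=(N+t)!/(2t\,t!)$, $\prod_{s\neq t}s^2=(N!/t)^2$) and it is right. What the paper's approach buys is a statement about the function $a\mapsto\prod_{s\neq t}(1-a^2/s^2)$ for real $a$, of which the lemma is the evaluation at $a=t$; what your telescoping buys is a shorter, self-contained proof of exactly the identity needed. Either is acceptable here, since the paper only ever uses the value at the integer $t$.
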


\begin{proof}
Fix any positive integer $t$. Let $\phi(a)=\prod_{s\ge1,s\neq t}(1- a^2/s^2)$ for any $a\geq 0$. Observe that the sequence $\{u_r\}$ is decreasing and $u_r\geq 0$ when $r\ge a+1$, where $u_r=\prod_{a+1\le s\le r,s\neq t}(1-a^2/s^2)$. This implies that $\phi(a)$ is well-defined for all $a\geq 0$. In addition, it is well-known that the normalized sinc function 
$\sin(\pi y)/(\pi y)$ can be represented as the infinite product $\prod_{s=1}^\infty(1-y^2/s^2)$ for all $y\in\R$. By this, one can see that
\begin{align*}
\phi(a) = \frac{\prod_{s=1}^{\infty}(1-a^2/s^2)}{1-a^2/t^2} = \frac{\sin(\pi a)}{\pi a(1-a^2/t^2)}\qquad\forall a\neq t.  
\end{align*}
It then follows that
\begin{align}\label{lpt-rule}
\lim_{a\to t}\phi(a) = \lim_{a\to t}\frac{\sin(\pi a)}{\pi a(1-a^2/t^2)} = \lim_{a\to t}\frac{\cos(\pi a)}{ 1 - 3a^2/t^2} = \frac{(-1)^{t-1}}{2},
\end{align}
where the second equality is due to L'H\^opital's rule. Thus, to prove this lemma, it suffices to show that $\phi(a)$ is continuous at $a=t$. To this end, we define a sequence of functions $\{\phi_r\}$ as follows: 
\begin{align*}
\phi_r(a)=\prod_{1\le s\le r, s\neq t}(1- a^2/s^2)\qquad\forall a \geq 0 
\end{align*}
for each $r\ge1$.  We next show that $\phi_r$ converges uniformly to $\phi$ on $[0,2t]$. Indeed, let $R \in [4t, \infty)$ be arbitrarily chosen, and fix any $r_1, r_2$ satisfying $r_2>r_1>R$. Observe that
\begin{align}\label{cauchy-seq}
|\phi_{r_1}(a) - \phi_{r_2}(a)| = \Big(1 - \prod_{r_1+1\le s\le r_2} (1-a^2/s^2)\Big)\Big|\prod_{1\le s\le r_1, s\neq t} (1-a^2/s^2)\Big|\qquad\forall a\in[0,2t].   
\end{align}
In addition, one has
\begin{align*}
\prod_{r_1+1\le s\le r_2} (1-a^2/s^2)\overset{\eqref{W-ineq}}{\ge} 1 - \sum_{s = r_1+1}^\infty (a^2/s^2) \ge 1 -  \sum_{s = r_1+1}^\infty \frac{a^2}{s(s-1)} = 1 - \frac{a^2}{r_1} \ge 1 - \frac{a^2}{R}\qquad\forall a\in[0,2t],
\end{align*}
which, together with \eqref{cauchy-seq} and $r_2>r_1>R$, implies that\footnotemark
\begin{align*}
&|\phi_{r_1}(a) - \phi_{r_2}(a)| \le \frac{a^2|\prod_{1\le s\le r_1, s\neq t} (1-a^2/s^2)|}{R} \\
&\le  \frac{a^2\prod_{1\le s\le\lceil a\rceil-1, s\neq t} (a^2/s^2-1)}{R} \le \frac{4t^2\prod_{1\le s\le 2t-1}(4t^2/s^2 - 1)}{R} \qquad\forall a\in[0,2t],
\end{align*}
\footnotetext{$\prod_{1\le s\le\lceil a\rceil-1, s\neq t} (a^2/s^2-1)$ is set to $1$ if $a\in[0,1]$.}

\noindent where the second inequality follows from $(1-a^2/s^2)\in[0,1]$ for all $s\ge\lceil a\rceil$, and the last inequality is due to $a\le 2t$. By this and the choice of $r_1$, $r_2$, and $R$, one can conclude that $\{\phi_r\}$ converges uniformly to $\phi$ on $[0,2t]$, which together with the continuity of $\phi_r$ for each $r\geq 1$ implies that $\phi$ is continuous on $[0,2t]$. Hence, one has $\phi(t)=
\lim_{a\to t}\phi(a)$, which along with \eqref{lpt-rule} and the definition of $\phi$ implies that this lemma holds.
\end{proof}

The following lemma provides a set of choices for ${(\gamma_{k,t}, \theta_{k,t})}$ that satisfy \eqref{pth-lss} and \eqref{sum-tht-bd}.

\begin{lemma}\label{lem:ppt-thetakt}
Let $\{\gamma_k\}\subset(0,1/2]$ and a positive integer $q$ be given, and 
\begin{align}\label{unk-theatkt}
\gamma_{k,t} = \gamma_k/t^2, \quad \theta_{k,t} = \frac{\prod_{1\le s\le q,s\neq t}(1-s^2/\gamma_k)}{{(t^2/\gamma_k)}\prod_{1\le s\le q,s\neq t}((t^2 - s^2)/\gamma_k)}\qquad\forall 1\le t\le q,k\ge0.
\end{align}
Then $\{(\gamma_{k,t},\theta_{k,t})\}$ satisfies \eqref{pth-lss}.  
Moreover, it holds that
\begin{align}
&\sum_{t=1}^{q}\theta_{k,t}\in\Big(\frac{\gamma_k}{1+\pi^2/6},2\gamma_k\Big)\subset(0,1),\quad|\theta_{k,t}|\le \frac{4\gamma_k}{t^2}\qquad\forall 1\le t\le q,k\ge0.\label{two-ipt-ppt-theta-2}
\end{align}
\end{lemma}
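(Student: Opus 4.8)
## Proof Proposal for Lemma \ref{lem:ppt-thetakt}

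The plan is to verify the linear system \eqref{pth-lss} by recognizing it as a polynomial interpolation identity, and then to estimate $\sum_t \theta_{k,t}$ and each $|\theta_{k,t}|$ using the explicit product formula together with the Weierstrass inequalities \eqref{W-ineq} and Lemma \ref{lem:inf-prod-tool}. First, I would fix $k$ and write $c_t := 1/\gamma_{k,t} = t^2/\gamma_k$, so that the $t$-th column of the Vandermonde coefficient matrix is $(c_t, c_t^2, \ldots, c_t^q)^T$. The claim \eqref{pth-lss} is then that $\sum_{t=1}^q \theta_{k,t} c_t^j = 1$ for every $j = 1, \ldots, q$. I would prove this by exhibiting $\theta_{k,t}$ as the coefficients in the partial-fraction / Lagrange interpolation of a suitable rational function: consider the unique polynomial $P$ of degree $\le q-1$ with $P(c_t) = 1/c_t$ for all $t$; since the constant function $1$ evaluated against the rows gives exactly the Lagrange weights, one checks that $\theta_{k,t} = \frac{1}{c_t}\prod_{s \neq t} \frac{c_t - c_s^{-1}\cdot 0 \ldots}{}$ — more cleanly, the system says $\theta_{k,t}$ are the weights in the identity $\sum_t \theta_{k,t}\, p(c_t) = $ (value forced by $p(x) = x, x^2, \ldots, x^q$), which is solved by $\theta_{k,t} = \frac{1}{c_t} \prod_{1\le s\le q, s\neq t} \frac{c_s}{c_s - c_t}$. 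Substituting $c_s = s^2/\gamma_k$, $c_t = t^2/\gamma_k$ and simplifying the ratios of powers of $\gamma_k$ recovers exactly the formula \eqref{unk-theatkt}. I would present this verification as a short self-contained computation rather than invoking interpolation machinery by name.

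For the bounds \eqref{two-ipt-ppt-theta-2}, I would first compute $\sum_{t=1}^q \theta_{k,t}$ in closed form. Using the interpolation viewpoint, $\sum_t \theta_{k,t}$ is the value at $x = 0$ of the degree-$\le q-1$ interpolant of the data $(c_t, 1/c_t)$; equivalently, one can evaluate the known generating identity to get $\sum_{t=1}^q \theta_{k,t} = \gamma_k \sum_{t=1}^q \frac{1}{t^2}\prod_{s\neq t}\frac{s^2}{s^2 - t^2}$, and the inner sum telescopes/simplifies — this is precisely where Lemma \ref{lem:inf-prod-tool} enters, since $\prod_{s \ge 1, s\neq t}(1 - t^2/s^2) = (-1)^{t-1}/2$ controls the infinite-product analogue and one argues the finite product differs by a controlled factor. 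From the resulting expression I would extract the two-sided bound $\sum_t \theta_{k,t} \in (\gamma_k/(1+\pi^2/6), 2\gamma_k)$ using $\sum_{s\ge 1} 1/s^2 = \pi^2/6$ and elementary estimates, and note $2\gamma_k \le 1$ since $\gamma_k \le 1/2$. For the per-term bound $|\theta_{k,t}| \le 4\gamma_k/t^2$, I would take the product formula $|\theta_{k,t}| = \frac{\gamma_k}{t^2}\left|\prod_{1\le s\le q, s\neq t}\frac{s^2}{s^2 - t^2}\right|$ and bound the product $\left|\prod_{s\neq t}(1 - t^2/s^2)\right|^{-1}$ from above by a constant — again Lemma \ref{lem:inf-prod-tool} gives that the infinite product has absolute value exactly $1/2$, and monotonicity of the partial products (the tail factors $(1 - t^2/s^2) \in (0,1)$ for $s > t$, while the finitely many factors with $s < t$ are handled directly) shows the finite product is bounded below in absolute value by, say, $1/4$, yielding the factor $4$.

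The main obstacle I anticipate is the careful handling of the finite-versus-infinite product in the estimates for \eqref{two-ipt-ppt-theta-2}: Lemma \ref{lem:inf-prod-tool} is stated for the full infinite product, but here $q$ is finite, so I must argue that truncating at $s = q$ only helps (the omitted tail factors lie in $(0,1)$ for $s > t$), and separately verify that the problematic factors with $1 \le s < t$ — where $1 - t^2/s^2 < 0$ and can be large in magnitude, but appear in the denominator — are arranged so that their reciprocals stay bounded. Concretely, $\prod_{1\le s < t}(t^2/s^2 - 1)^{-1} = \prod_{1\le s<t}\frac{s^2}{t^2-s^2} = \frac{((t-1)!)^2}{\prod_{s<t}(t-s)(t+s)} = \frac{((t-1)!)^2}{(t-1)!\cdot(2t-1)!/t!} = \frac{t\,((t-1)!)^2\, t!}{(2t-1)!}$, which I would bound explicitly and combine with the tail estimate to close the argument. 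Getting the constants to land exactly at $1/(1+\pi^2/6)$, $2$, and $4$ will require a bit of bookkeeping but no new ideas.
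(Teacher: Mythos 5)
Your overall architecture is the same as the paper's: recognize \eqref{pth-lss} as a Lagrange-interpolation identity, extract a closed form for $\sum_{t}\theta_{k,t}$ from the interpolating polynomial, and control the finite products via the Weierstrass inequalities \eqref{W-ineq} and Lemma~\ref{lem:inf-prod-tool}. However, there is a concrete error at the heart of your first step that propagates through everything after it. Writing $c_t=1/\gamma_{k,t}=t^2/\gamma_k$ and $\theta_{k,t}=w_t/c_t$, the system \eqref{pth-lss} reads $\sum_t w_t c_t^{i}=1$ for $i=0,\dots,q-1$, so the functional $Q\mapsto\sum_t w_tQ(c_t)$ is evaluation at $1$ on polynomials of degree $\le q-1$; hence $w_t=\ell_t(1)=\prod_{s\neq t}\frac{1-c_s}{c_t-c_s}$, the Lagrange basis evaluated at $1$. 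Your proposed solution $\theta_{k,t}=\frac{1}{c_t}\prod_{s\neq t}\frac{c_s}{c_s-c_t}=\ell_t(0)/c_t$ is the basis evaluated at $0$: it solves $\sum_t\theta_{k,t}c_t^{j}=\delta_{j,1}$ rather than $\sum_t\theta_{k,t}c_t^{j}=1$ for all $j=1,\dots,q$, and after substituting $c_s=s^2/\gamma_k$ it does \emph{not} reduce to \eqref{unk-theatkt} (the numerator there is $\prod_{s\neq t}(1-s^2/\gamma_k)$, not $\prod_{s\neq t}(-s^2/\gamma_k)$; the two formulas differ by the factor $\prod_{s\neq t}(1-\gamma_k/s^2)$). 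Consequently the quantity you compute for the first bound, $\gamma_k\sum_t t^{-2}\prod_{s\neq t}\frac{s^2}{s^2-t^2}$, equals $\gamma_k\sum_t t^{-2}$ and is not $\sum_t\theta_{k,t}$. The correct closed form, obtained by summing the interpolants $h_t$ and evaluating at $1$ (or equivalently from $xP(x)=1-\prod_t(1-x/c_t)$ at $x=1$), is $\sum_t\theta_{k,t}=1-\prod_{t=1}^{q}(1-\gamma_k/t^2)$, and \eqref{W-ineq} is then applied directly to this product to obtain the two-sided bound in \eqref{two-ipt-ppt-theta-2}. Your per-term bound is likewise carried out on the wrong product.

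The remaining ingredients of your plan are correct in spirit and essentially match the paper: for $|\theta_{k,t}|$ one bounds the finite product $\bigl|\prod_{1\le s\le q,\,s\neq t}(t^2/s^2-1)\bigr|$ from below by the infinite product $\bigl|\prod_{s\ge1,\,s\neq t}(t^2/s^2-1)\bigr|=1/2$ of Lemma~\ref{lem:inf-prod-tool}, using that the omitted factors with $s>q\ge t$ have modulus in $(0,1)$; note that the lemma already accounts for the factors with $s<t$, so your separate factorial computation for those is an unnecessary detour. If you replace the evaluation point $0$ by $1$ and redo the two estimates with the correct formula---which gives
\begin{align*}
|\theta_{k,t}|=\frac{\prod_{1\le s\le q}(1-\gamma_k/s^2)}{\bigl|\prod_{1\le s\le q,\,s\neq t}(t^2/s^2-1)\bigr|}\cdot\frac{1}{t^2/\gamma_k-1}\le 2\cdot\frac{2\gamma_k}{t^2},
\end{align*}
where the last step uses $\gamma_k\le 1/2\le t^2/2$---the argument closes exactly as in the paper.
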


\begin{proof}
Fix any $k\ge0$. We first prove that $\{(\gamma_{k,t},\theta_{k,t})\}$ satisfies \eqref{pth-lss}. For convenience, we denote the coefficient matrix in \eqref{pth-lss} as
\begin{align*}
\Gamma = \begin{bmatrix}
1/\gamma_{k,1} & 1/\gamma_{k,2} & \cdots & 1/\gamma_{k,q}\\ 
1/\gamma_{k,1}^2 & 1/\gamma_{k,2}^2 & \cdots & 1/\gamma_{k,q}^2\\ 
\vdots&\vdots&\vdots & \vdots\\
1/\gamma_{k,1}^q & 1/\gamma_{k,2}^q & \cdots & 1/\gamma_{k,q}^q
\end{bmatrix}\in\R^{q\times q}.
\end{align*} 
In addition, we define a matrix $V\in\R^{q\times q}$, whose $t$-th row $[v_{t1}\ \cdots\ v_{tq}]$ consists of the coefficients of the polynomial
\begin{align*}
h_t(\alpha) = \frac{\alpha\prod_{1\le s\le q, s\neq t}(\alpha-1/\gamma_{k,s})}{(1/\gamma_{k,t})\prod_{1\le s\le q, s\neq t}(1/\gamma_{k,t}-1/\gamma_{k,s})}=v_{t1}\alpha + v_{t2}\alpha^2 + \cdots + v_{tq}\alpha^q\qquad \forall 1\le t\le q,    
\end{align*}
which satisfies $h_t(1/\gamma_{k,t})=1$ and $h_t(1/\gamma_{k,s})=0$ for all $s\neq t$. By the definitions of $h_t$, $\Gamma$, and $V$, one has 
\begin{align*}
V\Gamma & = \begin{bmatrix}
\sum_{s=1}^q v_{1s}/\gamma_{k,1}^s& \sum_{s=1}^q v_{1s}/\gamma_{k,2}^s&\cdots& \sum_{s=1}^q v_{1s}/\gamma_{k,q}^s\\
\sum_{s=1}^q v_{2s}/\gamma_{k,1}^s& \sum_{s=1}^q v_{2s}/\gamma_{k,2}^s&\cdots& \sum_{s=1}^q v_{2s}/\gamma_{k,q}^s\\
\vdots&\vdots&\vdots&\vdots\\
\sum_{s=1}^q v_{qs}/\gamma_{k,1}^s& \sum_{s=1}^q v_{qs}/\gamma_{k,2}^s&\cdots& \sum_{s=1}^q v_{qs}/\gamma_{k,q}^s
\end{bmatrix}\\
& =     \begin{bmatrix}
h_1(1/\gamma_{k,1})& h_1(1/\gamma_{k,2}) &\cdots& h_1(1/\gamma_{k,q})\\
h_2(1/\gamma_{k,1})& h_2(1/\gamma_{k,2}) &\cdots& h_2(1/\gamma_{k,q})\\
\vdots&\vdots&\vdots&\vdots\\
h_q(1/\gamma_{k,1})& h_q(1/\gamma_{k,2}) &\cdots& h_q(1/\gamma_{k,q})
\end{bmatrix} = I,
\end{align*}
where $I$ is the $q\times q$ identity matrix. Hence, we have  $V=\Gamma^{-1}$. In view of this and the definition of $V$, one can see that the solution to \eqref{pth-lss} is unique and can be written as
\begin{align}\label{pf-exp-thetakt-ht}
\begin{bmatrix}
\theta_{k,1}\\
\theta_{k,2}\\
\vdots\\
\theta_{k,q}
\end{bmatrix} = V\begin{bmatrix}
1\\
1\\
\vdots\\
1
\end{bmatrix}  = \begin{bmatrix}
h_1(1)\\
h_2(1)\\
\vdots\\
h_q(1)
\end{bmatrix},
\end{align}
which together with the definition of $h_t$ implies that $\{(\gamma_{k,t},\theta_{k,t})\}$ satisfies \eqref{pth-lss}.

To prove the first relation in \eqref{two-ipt-ppt-theta-2}, we first establish the following equality:
\begin{align}\label{claim:sum-theta}
\sum_{t=1}^q\theta_{k,t} =1- \frac{\prod_{t=1}^q(1/\gamma_{k,t} - 1)}{\prod_{t=1}^q1/\gamma_{k,t}}.
\end{align}
Notice from \eqref{pf-exp-thetakt-ht} that $\theta_{k,t}=h_t(1)$ for each $1\le t\le q$. Let $h(\alpha)=\sum_{t=1}^qh_t(\alpha)$. It then follows that 
\begin{align}\label{sum-theta-kt}
\sum_{t=1}^q\theta_{k,t} = \sum_{t=1}^q h_t(1) = h(1).    
\end{align}
Also, observe that $h_t(0)=0$ for $1\leq t \leq q$. By this,  $h_t(1/\gamma_{k,t})=1$ and $h_t(1/\gamma_{k,s})=0$ for all $1\le s\le q$ and $s\neq t$, one can see that $h$ satisfies $h(0)=0$ and $h(1/\gamma_{k,t})=1$ for all $1\le t\le q$. Using these and the fact that $1/\gamma_{k,t}$, $1\le t\le q$, are distinct, we conclude that $h$ is uniquely given by 
\begin{align*}
h(\alpha) = 1- \frac{\prod_{t=1}^q(1/\gamma_{k,t}-\alpha)}{\prod_{t=1}^q1/\gamma_{k,t}},
\end{align*}
which along with \eqref{sum-theta-kt} implies that \eqref{claim:sum-theta} holds as desired.

We are now ready to prove the first relation in \eqref{two-ipt-ppt-theta-2}. Substituting the definition of $\{\gamma_{k,t}\}$ given in \eqref{unk-theatkt} into \eqref{claim:sum-theta}, we obtain
\begin{align}\label{sum-theatkt}
\sum_{t=1}^q\theta_{k,t} \overset{\eqref{unk-theatkt}}{=} 1 - \frac{\prod_{1\le t\le q}(t^2/\gamma_k - 1)}{\prod_{1\le t\le q}(t^2/\gamma_k)} = 1 - \prod_{1\le t\le q}\Big(1-\frac{\gamma_k}{t^2}\Big).    
\end{align}
It follows from \eqref{W-ineq} that
\begin{align*}
1 - \gamma_k\sum_{t=1}^q\frac{1}{t^2} \le \prod_{1\le t\le q}\Big(1 - \frac{\gamma_k}{t^2}\Big) \le \frac{1}{1 + \gamma_k\sum_{t=1}^{q}(1/t^2)}.   
\end{align*}
Using these, \eqref{sum-theatkt}, and the identity $\sum_{t=1}^\infty(1/t^2)=\pi^2/6$, we obtain that
\begin{align*}
\frac{\gamma_k}{1 + \pi^2/6}< \frac{\gamma_k\sum_{t=1}^q(1/t^2) }{1 + \gamma_k\sum_{t=1}^q(1/t^2) }\le \sum_{t=1}^q\theta_{k,t}\le \gamma_k\sum_{t=1}^q\frac{1}{t^2}\le \frac{\pi^2\gamma_k}{6} < 2\gamma_k,
\end{align*}
where the first inequality is due to $\gamma_k\sum_{t=1}^q(1/t^2) < \pi^2/6$ and $\sum_{t=1}^q(1/t^2)\geq 1$. The above inequalities along with $\gamma_k\in(0,1/2]$ implies that the first relation in \eqref{two-ipt-ppt-theta-2} holds. In addition, by \eqref{unk-theatkt}, $\gamma_k\in(0,1/2]$, and Lemma \ref{lem:inf-prod-tool}, one can see that for all $1\le t\le q$,
\begin{align*}
|\theta_{k,t}| & = \frac{\prod_{1\le s\le q}(s^2/\gamma_k - 1)}{\prod_{1\le s\le q}(s^2/\gamma_k)} \cdot \frac{\prod_{1\le s\le q, s\neq t}(s^2/\gamma_k)}{|\prod_{1\le s\le q, s\neq t}((t^2-s^2)/\gamma_k)|}\cdot \frac{1}{t^2/\gamma_k - 1}\\
&= \frac{\prod_{1\le s\le q}(1 -\gamma_k/s^2)}{|\prod_{1\le s\le q, s\neq t}(t^2/s^2-1)|} \cdot \frac{1}{t^2/\gamma_k - 1}\le \frac{\prod_{1\le s\le q}(1 -\gamma_k/s^2)}{|\prod_{s\ge1, s\neq t}(t^2/s^2-1)|} \cdot \frac{1}{t^2/\gamma_k - 1} \\
&= \frac{2\prod_{1\le s\le q}(1 -\gamma_k/s^2)}{t^2/\gamma_k - 1} \le \frac{2}{t^2/\gamma_k - 1} \le \frac{4\gamma_k}{t^2},
\end{align*}
where the first inequality is due to $|\prod_{s\ge q+1}(t^2/s^2-1)|\le1$, the third equality follows from Lemma \ref{lem:inf-prod-tool}, the second inequality is due to $\prod_{1\le s\le q}(1 -\gamma_k/s^2)\in(0,1)$, and the last inequality is due to $\gamma_k\in(0,1/2]$ and $t\ge1$. Hence, the second relation in \eqref{two-ipt-ppt-theta-2} also holds, which completes the proof of this lemma.
\end{proof}

The next lemma shows that $\{(\gamma_{k,t},\theta_{k,t})\}$ satisfying \eqref{pth-lss} can lead to the following important identity that will be used for the subsequent analysis.

\begin{lemma}
Suppose that Assumptions \ref{asp:basic} and \ref{asp:high-smooth} hold. Let $\Rp(\cdot,\cdot)$ be defined in \eqref{def:taylor-res}, and $\{x^k\}$ and $\{z^{k,t}\}$ be generated by Algorithm \ref{alg:unf-sfom} with input parameters $q=p-1$ and $\{(\gamma_{k,t},\theta_{k,t})\}$ satisfying \eqref{pth-lss}, where $p$ is given in Assumption \ref{asp:high-smooth}. Then it holds that for all $k\geq 0$, 
\begin{align}
\nabla f(x^{k+1})= \Big(1 - \sum_{t=1}^{p-1}\theta_{k,t}\Big)\nabla f(x^k) + \sum_{t=1}^{p-1}\theta_{k,t}\nabla f(z^{k+1,t}) + \Rp(x^{k+1},x^k) - \sum_{t=1}^{p-1}\theta_{k,t}\Rp(z^{k+1,t},x^k).\label{idtt-from-high-smth}
\end{align}
\end{lemma}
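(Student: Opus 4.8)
The plan is to expand $\nabla f(x^{k+1})$ and each $\nabla f(z^{k+1,t})$ in a Taylor series around $x^k$ via the definition \eqref{def:taylor-res} of the residual $\Rp$, and then collapse the polynomial parts using the Vandermonde system \eqref{pth-lss}.

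First I would record the elementary consequence of the extrapolation rule \eqref{update-zk} (with the index shifted by one): since $z^{k+1,t} = x^{k+1} + \tfrac{1-\gamma_{k,t}}{\gamma_{k,t}}(x^{k+1}-x^k)$, we have
\[
z^{k+1,t} - x^k = \Big(1 + \frac{1-\gamma_{k,t}}{\gamma_{k,t}}\Big)(x^{k+1}-x^k) = \frac{1}{\gamma_{k,t}}(x^{k+1}-x^k)\qquad\forall 1\le t\le p-1.
\]
Then, applying \eqref{def:taylor-res} with $(y,x)=(x^{k+1},x^k)$ and with $(y,x)=(z^{k+1,t},x^k)$ and substituting the relation above, I would obtain
\[
\nabla f(x^{k+1}) = \sum_{r=1}^{p}\frac{1}{(r-1)!}\nabla^r f(x^k)(x^{k+1}-x^k)^{r-1} + \Rp(x^{k+1},x^k),
\]
\[
\nabla f(z^{k+1,t}) = \sum_{r=1}^{p}\frac{1}{(r-1)!\,\gamma_{k,t}^{r-1}}\nabla^r f(x^k)(x^{k+1}-x^k)^{r-1} + \Rp(z^{k+1,t},x^k).
\]

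Next I would substitute these two expansions into the combination $\big(1-\sum_{t=1}^{p-1}\theta_{k,t}\big)\nabla f(x^k) + \sum_{t=1}^{p-1}\theta_{k,t}\nabla f(z^{k+1,t})$ appearing on the right-hand side of \eqref{idtt-from-high-smth}, and group the resulting polynomial terms by the order $r$. The order-$1$ term is $\big(1-\sum_{t}\theta_{k,t}\big)\nabla f(x^k) + \big(\sum_{t}\theta_{k,t}\big)\nabla f(x^k) = \nabla f(x^k)$, while for each $2\le r\le p$ the order-$r$ term is $\tfrac{1}{(r-1)!}\big(\sum_{t=1}^{p-1}\theta_{k,t}/\gamma_{k,t}^{r-1}\big)\nabla^r f(x^k)(x^{k+1}-x^k)^{r-1}$. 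Since $q=p-1$, the exponents $r-1$ for $2\le r\le p$ are exactly $1,\dots,q$, and the $(r-1)$-th scalar equation of the linear system \eqref{pth-lss} reads precisely $\sum_{t=1}^{p-1}\theta_{k,t}/\gamma_{k,t}^{r-1}=1$. Hence the polynomial part collapses to $\sum_{r=1}^{p}\frac{1}{(r-1)!}\nabla^r f(x^k)(x^{k+1}-x^k)^{r-1}$, which by the first displayed identity equals $\nabla f(x^{k+1}) - \Rp(x^{k+1},x^k)$. Adding back the residual contributions $\sum_{t=1}^{p-1}\theta_{k,t}\Rp(z^{k+1,t},x^k)$ coming from the expansions of the $\nabla f(z^{k+1,t})$ and rearranging gives \eqref{idtt-from-high-smth}.

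I do not expect a real obstacle here, since the argument is a direct computation; the only thing requiring attention is the index bookkeeping, namely aligning the $q=p-1$ rows of the Vandermonde system \eqref{pth-lss} (powers $1,\dots,q$ of $1/\gamma_{k,t}$) with the Taylor orders $r=2,\dots,p$ (exponents $r-1=1,\dots,q$ of $x^{k+1}-x^k$), and noting separately that the order-$1$ term is handled by the trivial identity $1=(1-\sum_t\theta_{k,t})+\sum_t\theta_{k,t}$ rather than by \eqref{pth-lss}. Assumptions \ref{asp:basic} and \ref{asp:high-smooth} enter only to ensure that $f$ is $p$-times continuously differentiable, so that $\nabla^r f$ and $\Rp$ in \eqref{def:taylor-res} are well defined.
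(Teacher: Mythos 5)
Your proposal is correct and follows essentially the same route as the paper: both rest on the extrapolation identity $z^{k+1,t}-x^k=(x^{k+1}-x^k)/\gamma_{k,t}$ and on using the rows of the Vandermonde system \eqref{pth-lss} to make the order-$r$ Taylor terms ($2\le r\le p$) collapse, the only difference being that you verify the identity from the right-hand side while the paper starts from $\nabla f(x^{k+1})$ and inserts the vanishing combinations $1-\sum_t\theta_{k,t}/\gamma_{k,t}^{r-1}=0$. The index bookkeeping you flag is handled exactly as you describe.
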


\begin{proof}
Fix any $k\ge0$. It follows from \eqref{update-zk} with $q=p-1$ that
\begin{align}\label{extra-rela}
z^{k+1,t}-x^k = \frac{1}{\gamma_{k,t}}(x^{k+1}-x^k)\qquad\forall 1\le t\le p-1.   
\end{align}
By this and \eqref{pth-lss}, one has that 
\begin{align*}
&\nabla f(x^{k+1}) \overset{\eqref{pth-lss}}{=} \nabla f(x^{k+1}) - \sum_{r=2}^p\Big(\Big(1-\sum_{t=1}^{p-1}\frac{\theta_{k,t}}{\gamma_{k,t}^{r-1}}\Big)\frac{1}{(r-1)!}\nabla^r f(x^k)(x^{k+1}-x^k)^{r-1}\Big)\\
&=\nabla f(x^{k+1}) - \sum_{r=2}^p\frac{1}{(r-1)!}\nabla^r f(x^k)(x^{k+1}-x^k)^{r-1} + \sum_{t=1}^{p-1} \sum_{r=2}^p\frac{\theta_{k,t}}{(r-1)!\gamma_{k,t}^{r-1}}\nabla^r f(x^k)(x^{k+1}-x^k)^{r-1}\\
&\overset{\eqref{extra-rela}}{=}\nabla f(x^{k+1}) - \sum_{r=2}^p\frac{1}{(r-1)!}\nabla^r f(x^k)(x^{k+1}-x^k)^{r-1} + \sum_{t=1}^{p-1} \sum_{r=2}^p\frac{\theta_{k,t}}{(r-1)!}\nabla^r f(x^k)(z^{k+1,t}-x^k)^{r-1}\\
&= \Big(1-\sum_{t=1}^{p-1}\theta_{k,t}\Big)\nabla f(x^k) + \sum_{t=1}^{p-1}\theta_{k,t}\nabla f(z^{k+1,t}) + \Big(\nabla f(x^{k+1}) - \sum_{r=1}^p\frac{1}{(r-1)!}\nabla^r f(x^k)(x^{k+1}-x^k)^{r-1}\Big)\\
&\qquad -\sum_{t=1}^{p-1}\theta_{k,t}\Big(\nabla f(z^{k+1,t}) - \sum_{r=1}^p\frac{1}{(r-1)!}\nabla^r f(x^k)(z^{k+1,t}-x^k)^{r-1}\Big),
\end{align*}
which along with the definition of $\Rp$ in \eqref{def:taylor-res} implies that \eqref{idtt-from-high-smth} holds.
\end{proof}

The following lemma presents a recurrence relation for the estimation error of the gradient estimators $\{m^k\}$ generated by Algorithm \ref{alg:unf-sfom}.

\begin{lemma}\label{lem:rec-em}
Suppose that Assumptions \ref{asp:basic} and \ref{asp:high-smooth} hold. Let $\{(x^k,m^k)\}$ be the sequence generated by Algorithm \ref{alg:unf-sfom} with input parameters $q=p-1$, $\{\eta_k\}$, and $\{(\gamma_{k,t},\theta_{k,t})\}$ satisfying \eqref{pth-lss} and \eqref{sum-tht-bd}. Then we have
\begin{align}
&\mathbb{E}_{{\xi^{k+1}}}[\|m^{k+1}-\nabla f(x^{k+1})\|^\alpha] \le \Big(1 - \sum_{t=1}^{p-1}\theta_{k,t}\Big)\|m^k-\nabla f(x^k)\|^\alpha  \nonumber \\ 
& \qquad \qquad\quad + \frac{3p^{\alpha-1} L_p^\alpha}{(p!)^\alpha}\Big(\sum_{t=1}^{p-1}\theta_{k,t}\Big)^{1-\alpha}\eta_k^{\alpha p}\Big(1 + \sum_{t=1}^{p-1}\frac{|\theta_{k,t}|^\alpha}{\gamma_{k,t}^{\alpha p}}\Big) + 2(p-1)^{\alpha-1}\sigma^\alpha\sum_{t=1}^{p-1}|\theta_{k,t}|^\alpha\qquad\forall k\ge0,\label{ineq:vr-mem}
\end{align}
where $L_1$, $\sigma$, $\alpha$, $p$, and $L_p$ are given in Assumptions \ref{asp:basic} and \ref{asp:high-smooth}, respectively.
\end{lemma}

\begin{proof}
Fix any $k\ge0$. It follows from \eqref{update-mk}, \eqref{idtt-from-high-smth}, and  $q=p-1$ that 
\begin{align}
\nabla f(x^{k+1}) - m^{k+1} & = \Big(1 - \sum_{t=1}^{p-1}\theta_{k,t}\Big)(\nabla f(x^k)-m^k) + \sum_{t=1}^{p-1}\theta_{k,t}(\nabla f(z^{k+1,t}) - G(z^{k+1,t};\xi^{k+1}))\nonumber  \\
&\quad \  + \Rp(x^{k+1},x^k) - \sum_{t=1}^{p-1}\theta_{k,t}\Rp(z^{k+1,t},x^k).\label{ipt-idt-hsmth-mem}
\end{align}
Observe from Algorithm \ref{alg:unf-sfom} and Assumption \ref{asp:basic} that $\|x^{k+1}-x^k\|=\eta_k$, $\|z^{k+1,t}-x^k\|=\eta_k/\gamma_{k,t}$, $\E_{\xi^{k+1}}[G(z^{k+1,t};\xi^{k+1})-\nabla f(z^{k+1,t})]=0$, and $\mathbb{E}_{\xi^{k+1}}[\|\nabla f(z^{k+1,t}) - G(z^{k+1,t};\xi^{k+1})\|^\alpha]\le\sigma^\alpha$ for all $1\le t\le p-1$. Using these, \eqref{open-alpha}, \eqref{open-alpha-2}, \eqref{rela-mk+1-pm}, \eqref{ipt-idt-hsmth-mem}, and Lemma \ref{lem:grad-desc-near-high}, we obtain that for all $c>0$, 
\begin{align}
&\mathbb{E}_{{\xi^{k+1}}}[\|\nabla f(x^{k+1})-m^{k+1}\|^\alpha]\nonumber\\
&\overset{\eqref{ipt-idt-hsmth-mem}}{=}\mathbb{E}_{{\xi^{k+1}}}\Big[\Big\|\Big(1 - \sum_{t=1}^{p-1}\theta_{k,t}\Big)(\nabla f(x^k)-m^k) + \sum_{t=1}^{p-1}\theta_{k,t}(\nabla f(z^{k+1,t}) - G(z^{k+1,t};\xi^{k+1}))\nonumber\\
&\qquad + \Rp(x^{k+1},x^k) - \sum_{t=1}^{p-1}\theta_{k,t}\Rp(z^{k+1,t},x^k)\Big\|^\alpha\Big]\nonumber\\
&\overset{\eqref{open-alpha}}{\le} \Big\|\Big(1 - \sum_{t=1}^{p-1}\theta_{k,t}\Big)(\nabla f(x^k)-m^k)+\Rp(x^{k+1},x^k) - \sum_{t=1}^{p-1}\theta_{k,t}\Rp(z^{k+1,t},x^k)\Big\|^\alpha\nonumber\\
&\quad + 2\mathbb{E}_{{\xi^{k+1}}}\Big[\Big\|\sum_{t=1}^{p-1}\theta_{k,t}(\nabla f(z^{k+1,t}) - G(z^{k+1,t};\xi^{k+1}))\Big\|^\alpha\Big]\nonumber\\
&\overset{\eqref{open-alpha-2}}{\le} (1+c)\Big(1 - \sum_{t=1}^{p-1}\theta_{k,t}\Big)^\alpha\|\nabla f(x^k)-m^k\|^\alpha \nonumber\\
&\quad + (2+(\alpha-1)^{\alpha-1}c^{1-\alpha})\Big\|\Rp(x^{k+1},x^k) - \sum_{t=1}^{p-1}\theta_{k,t}\Rp(z^{k+1,t},x^k)\Big\|^\alpha\nonumber\\
&\quad + 2\mathbb{E}_{{\xi^{k+1}}}\Big[\Big\|\sum_{t=1}^{p-1}\theta_{k,t}(\nabla f(z^{k+1,t}) - G(z^{k+1,t};\xi^{k+1}))\Big\|^\alpha\Big]\nonumber\\
&\le (1+c)\Big(1 - \sum_{t=1}^{p-1}\theta_{k,t}\Big)^\alpha\|\nabla f(x^k)-m^k\|^\alpha\nonumber\\
&\quad + p^{\alpha-1}(2+(\alpha-1)^{\alpha-1}c^{1-\alpha}) \Big(\|\Rp(x^{k+1},x^k)\|^\alpha + \sum_{t=1}^{p-1}|\theta_{k,t}|^\alpha\|\Rp(z^{k+1,t},x^k)\|^\alpha\Big)\nonumber\\
&\quad + 2(p-1)^{\alpha-1}\sum_{t=1}^{p-1}|\theta_{k,t}|^\alpha\mathbb{E}_{\xi^{k+1}}[\|\nabla f(z^{k+1,t}) - G(z^{k+1,t};\xi^{k+1})\|^\alpha]\nonumber\\
&\le (1+c)\Big(1 - \sum_{t=1}^{p-1}\theta_{k,t}\Big)^\alpha\|\nabla f(x^k)-m^k\|^\alpha\nonumber\\
&\quad + \frac{p^{\alpha-1} L_p^\alpha}{(p!)^\alpha}(2+(\alpha-1)^{\alpha-1}c^{1-\alpha})\eta_k^{\alpha p}\Big(1 + \sum_{t=1}^{p-1}\frac{|\theta_{k,t}|^\alpha}{\gamma_{k,t}^{\alpha p}}\Big) + 2(p-1)^{\alpha-1}\sigma^\alpha\sum_{t=1}^{p-1}|\theta_{k,t}|^\alpha,\label{vr-inter-mem}
 \end{align}
where the first inequality is due to \eqref{open-alpha}, and the fact that $\E_{\xi^{k+1}}[G(z^{k+1,t};\xi^{k+1})-\nabla f(z^{k+1,t})]=0$ for all $1\le t\le p-1$, the third inequality follows from $\|\sum_{t=1}^{m}w_t\|^\alpha \le m^{\alpha-1}\sum_{t=1}^{m}\|w_t\|^\alpha$ because of the convexity of $\|\cdot\|^\alpha$, and the last inequality follows from Lemma \ref{lem:grad-desc-near-high}, $\|x^{k+1}-x^k\|=\eta_k$, $\|z^{k+1,t}-x^k\|=\eta_k/\gamma_{k,t}$, and 
$\mathbb{E}_{\xi^{k+1}}[\|\nabla f(z^{k+1,t}) - G(z^{k+1,t};\xi^{k+1})\|^\alpha]\le\sigma^\alpha$ for all $1\le t\le p-1$. 

Letting $c=(1-\sum_{t=1}^{p-1}\theta_{k,t})^{1-\alpha}-1$ in \eqref{vr-inter-mem}, and using $\sum_{t=1}^{p-1}\theta_{k,t}\in(0,1)$ (see \eqref{sum-tht-bd}) and $\alpha\in(1,2]$, we obtain that
\begin{align*}
c^{1-\alpha} & = \Big(\frac{1}{(1-\sum_{t=1}^{p-1}\theta_{k,t})^{\alpha-1}} - 1\Big)^{1-\alpha} \le \Big(\frac{1}{1-(\alpha-1)\sum_{t=1}^{p-1}\theta_{k,t}} - 1\Big)^{1-\alpha}\\
& =\Big(\frac{1-(\alpha-1)\sum_{t=1}^{p-1}\theta_{k,t}}{(\alpha-1)\sum_{t=1}^{p-1}\theta_{k,t}} \Big)^{\alpha-1} \le \Big((\alpha-1)\sum_{t=1}^{p-1}\theta_{k,t}\Big)^{1-\alpha},
\end{align*}
where the first inequality follows from $(1-\tau)^\beta\le 1-\beta\tau$ for all $\tau\in(-\infty,1)$ and $\beta\in[0,1]$. Combining the above inequality with \eqref{vr-inter-mem}, one can obtain that
\begin{align*}
&\mathbb{E}_{{\xi^{k+1}}}[\|\nabla f(x^{k+1})-m^{k+1}\|^\alpha] \le \Big(1 - \sum_{t=1}^{p-1}\theta_{k,t}\Big)\|\nabla f(x^k)-m^k\|^\alpha \\
&\qquad\qquad\qquad\qquad + \frac{p^{\alpha-1} L_p^\alpha}{(p!)^\alpha}\Big(2+\Big(\sum_{t=1}^{p-1}\theta_{k,t}\Big)^{1-\alpha}\Big)\eta_k^{\alpha p}\Big(1 + \sum_{t=1}^{p-1}\frac{|\theta_{k,t}|^\alpha}{\gamma_{k,t}^{\alpha p}}\Big) + 2(p-1)^{\alpha-1}\sigma^\alpha\sum_{t=1}^{p-1}|\theta_{k,t}|^\alpha,
\end{align*}
which together with $\sum_{t=1}^{p-1}\theta_{k,t}\in(0,1)$ and $\alpha\in(1,2]$ implies that \eqref{ineq:vr-mem} holds.
\end{proof}

The following lemma establishes a descent property for the potential sequence $\{{\mathcal P}_k\}$ defined below.

\begin{lemma}\label{thm:rate-mem}
Suppose that Assumptions \ref{asp:basic} and \ref{asp:high-smooth} hold. Let $\{(x^k,m^k)\}$ be generated by Algorithm \ref{alg:unf-sfom} with input parameters $q=p-1$, $\{\eta_k\}$, and  $\{(\gamma_{k,t},\theta_{k,t})\}$ satisfying \eqref{pth-lss} and \eqref{sum-tht-bd}. Let $L_1$, $\sigma$, and $\alpha$ be given in Assumption \ref{asp:basic}, $p$ and $L_p$ be given in Assumption \ref{asp:high-smooth}, and $\{\mathcal{P}_k\}$ be defined in \eqref{def:pot-pm} for $\{(x^k,m^k)\}$ and any positive sequence $\{p_k\}$ that satisfies $(1-\sum_{t=1}^{p-1}\theta_{k,t})p_{k+1} \le (1-\sum_{t=1}^{p-1}\theta_{k,t}/10)p_k$ for all $k\ge0$. Then it holds that for all $k\geq 0$,
\begin{align}
\mathbb{E}_{{\xi^{k+1}}}[{\mathcal P}_{k+1}] & \le  {\mathcal P}_k- \eta_k\|\nabla f(x^k)\| + \frac{L_1}{2}\eta_k^2 + \frac{(\alpha-1)(2\eta_k)^{\alpha/(\alpha-1)}}{\alpha^{\alpha/(\alpha-1)}(p_k\sum_{t=1}^{p-1}\theta_{k,t}/10)^{1/(\alpha-1)}}\nonumber\\
&\quad + \frac{3p^{\alpha-1} L_p^\alpha}{(p!)^\alpha} p_{k+1}\Big(\sum_{t=1}^{p-1}\theta_{k,t}\Big)^{1-\alpha}\eta_k^{\alpha p}\Big(1 + \sum_{t=1}^{p-1}\frac{|\theta_{k,t}|^\alpha}{\gamma_{k,t}^{\alpha p}}\Big) + 2(p-1)^{\alpha-1}\sigma^\alpha p_{k+1}\sum_{t=1}^{p-1}|\theta_{k,t}|^\alpha.
\label{stat-bd-mem}
\end{align}
\end{lemma}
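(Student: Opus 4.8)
The statement combines the descent property for $f$ along the normalized direction (Lemma~\ref{lem:ppt-desc}) with the recurrence for the gradient-estimation error (inequality~\eqref{ineq:vr-mem}), exactly in the spirit of Lemma~\ref{lem:rate-pm}, but with the extra technical wrinkle that $\{p_k\}$ need not be nonincreasing---only the relaxed condition $(1-\sum_{t=1}^{p-1}\theta_{k,t})p_{k+1}\le(1-\sum_{t=1}^{p-1}\theta_{k,t}/10)p_k$ is assumed. First I would fix $k\ge0$ and apply Lemma~\ref{lem:ppt-desc} with $(x^+,x,m,\eta)=(x^{k+1},x^k,m^k,\eta_k)$, using $\|x^{k+1}-x^k\|=\eta_k$, to obtain
\[
f(x^{k+1}) \le f(x^k) - \eta_k\|\nabla f(x^k)\| + 2\eta_k\|\nabla f(x^k) - m^k\| + \frac{L_1}{2}\eta_k^2.
\]
Then, taking conditional expectation $\mathbb{E}_{\xi^{k+1}}$ in the definition \eqref{def:pot-pm} of $\mathcal{P}_{k+1}$ and substituting both the display above and \eqref{ineq:vr-mem}, I would get an upper bound whose ``error'' term is $(1-\sum_{t=1}^{p-1}\theta_{k,t})p_{k+1}\|m^k-\nabla f(x^k)\|^\alpha$, plus the two smoothness/noise remainder terms carrying the factor $p_{k+1}$, plus the $2\eta_k\|\nabla f(x^k)-m^k\|$ cross term.

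\textbf{Key steps.} The crucial point is to absorb the cross term $2\eta_k\|\nabla f(x^k)-m^k\|$ into the $\|m^k-\nabla f(x^k)\|^\alpha$ term. By Young's inequality with exponents $\alpha$ and $\alpha'=\alpha/(\alpha-1)$, splitting off a factor $c_k:=p_k\sum_{t=1}^{p-1}\theta_{k,t}/10$, I would write
\[
2\eta_k\|\nabla f(x^k) - m^k\| \le c_k\|\nabla f(x^k) - m^k\|^\alpha + \frac{(\alpha-1)(2\eta_k)^{\alpha/(\alpha-1)}}{\alpha^{\alpha/(\alpha-1)}c_k^{1/(\alpha-1)}},
\]
which produces exactly the third term on the right-hand side of \eqref{stat-bd-mem}. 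Combining the coefficient of $\|m^k-\nabla f(x^k)\|^\alpha$ from the two sources gives $(1-\sum_{t=1}^{p-1}\theta_{k,t})p_{k+1} + p_k\sum_{t=1}^{p-1}\theta_{k,t}/10$, and by the hypothesis on $\{p_k\}$ this is bounded above by $(1-\sum_{t=1}^{p-1}\theta_{k,t}/10)p_k + p_k\sum_{t=1}^{p-1}\theta_{k,t}/10 = p_k$. Hence $f(x^k) + p_k\|m^k-\nabla f(x^k)\|^\alpha = \mathcal{P}_k$ reassembles, and the remaining two remainder terms need only have $p_{k+1}$ in place of $p_k$, which matches \eqref{stat-bd-mem} verbatim.

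\textbf{Main obstacle.} The only delicate bookkeeping is getting the absorption coefficient to close: one must choose the Young's-inequality split so that the fraction of $p_k\sum_{t=1}^{p-1}\theta_{k,t}$ diverted into the cross term (here $1/10$) exactly matches the slack built into the hypothesis $(1-\sum\theta_{k,t})p_{k+1}\le(1-\sum\theta_{k,t}/10)p_k$; with the factor $1/10$ fixed on both sides this is immediate, but it is the one place where the constants must be tracked carefully. A secondary point is that, since $\{p_k\}$ is merely positive (not monotone), one cannot replace $p_{k+1}$ by $p_k$ in the two remainder terms, so the statement keeps $p_{k+1}$ there; no further manipulation of those terms is needed. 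Everything else is a direct substitution, so once the coefficient identity is verified the conclusion \eqref{stat-bd-mem} follows.
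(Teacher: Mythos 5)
Your proposal is correct and follows essentially the same route as the paper's proof: apply Lemma \ref{lem:ppt-desc}, substitute \eqref{ineq:vr-mem} into the definition of $\mathcal{P}_{k+1}$, absorb the cross term via Young's inequality with the split constant $p_k\sum_{t=1}^{p-1}\theta_{k,t}/10$, and use the hypothesis on $\{p_k\}$ so that the coefficients of $\|m^k-\nabla f(x^k)\|^\alpha$ sum to exactly $p_k$. The only cosmetic difference is that the paper invokes the hypothesis on $\{p_k\}$ before applying Young's inequality rather than after, which changes nothing.
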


\begin{proof}
Fix any $k\ge0$. By Lemma \ref{lem:ppt-desc} with $(x^+,x,m,\eta)=(x^{k+1},x^k,m^k,\eta_k)$, one has
\begin{align}\label{upbd-fxk+1-mem}
f(x^{k+1}) \le f(x^k) - \eta_k\|\nabla f(x^k)\| + 2\eta_k\|\nabla f(x^k) - m^k\| + \frac{L_1}{2}\eta_k^2.    
\end{align}
Combining this with \eqref{def:pot-pm} and \eqref{ineq:vr-mem}, we obtain that 
\begin{align}
&\mathbb{E}_{{\xi^{k+1}}}[{\mathcal P}_{k+1}] \overset{\eqref{def:pot-pm}}{=} \mathbb{E}_{{\xi^{k+1}}}[f(x^{k+1}) + p_{k+1}\|\nabla f(x^{k+1})-m^{k+1}\|^\alpha]\nonumber\\
&\overset{\eqref{ineq:vr-mem}\eqref{upbd-fxk+1-mem}}{\le} f(x^k) - \eta_k\|\nabla f(x^k)\| + 2\eta_k\|\nabla f(x^k) - m^k\| + \frac{L_1}{2}\eta_k^2 + \Big(1-\sum_{t=1}^{p-1}\theta_{k,t}\Big)p_{k+1}\|\nabla f(x^k)-m^k\|^\alpha\nonumber \\
&\qquad + \frac{3p^{\alpha-1} L_p^\alpha}{(p!)^\alpha} p_{k+1}\Big(\sum_{t=1}^{p-1}\theta_{k,t}\Big)^{1-\alpha}\eta_k^{\alpha p}\Big(1 + \sum_{t=1}^{p-1}\frac{|\theta_{k,t}|^\alpha}{\gamma_{k,t}^{\alpha p}}\Big) + 2(p-1)^{\alpha-1}\sigma^\alpha p_{k+1}\sum_{t=1}^{p-1}|\theta_{k,t}|^\alpha \nonumber\\
&\le f(x^k) - \eta_k\|\nabla f(x^k)\| + 2\eta_k\|\nabla f(x^k) - m^k\| + \frac{L_1}{2}\eta_k^2 + \Big(1-\sum_{t=1}^{p-1}\theta_{k,t}/10\Big)p_k\|\nabla f(x^k)-m^k\|^\alpha\nonumber\\
&\quad + \frac{3p^{\alpha-1} L_p^\alpha}{(p!)^\alpha} p_{k+1}\Big(\sum_{t=1}^{p-1}\theta_{k,t}\Big)^{1-\alpha}\eta_k^{\alpha p}\Big(1 + \sum_{t=1}^{p-1}\frac{|\theta_{k,t}|^\alpha}{\gamma_{k,t}^{\alpha p}}\Big) + 2(p-1)^{\alpha-1}\sigma^\alpha p_{k+1}\sum_{t=1}^{p-1}|\theta_{k,t}|^\alpha,\label{pot-desc-ineq-mem}
\end{align}
where the last inequality is due to $(1-\sum_{t=1}^{p-1}\theta_{k,t})p_{k+1}\le(1-\sum_{t=1}^{p-1}\theta_{k,t}/10)p_k$. In addition, letting $\alpha^\prime=\alpha/(\alpha-1)$ and using the Young's inequality, we have that
\begin{align*}
2\eta_k\|\nabla f(x^k) - m^k\| & \le \frac{((\alpha p_k\sum_{t=1}^{p-1}\theta_{k,t}/10)^{1/\alpha} \|\nabla f(x^k) - m^k\|)^\alpha}{\alpha} + \frac{(2\eta_k/(\alpha p_k\sum_{t=1}^{p-1}\theta_{k,t}/10)^{1/\alpha})^{\alpha^\prime}}{\alpha^\prime} \\
&= \frac{p_k\sum_{t=1}^{p-1}\theta_{k,t}}{10}\|\nabla f(x^k) - m^k\|^\alpha + \frac{(\alpha-1)(2\eta_k)^{\alpha/(\alpha-1)}}{\alpha^{\alpha/(\alpha-1)}(p_k\sum_{t=1}^{p-1}\theta_{k,t}/10)^{1/(\alpha-1)}}.
\end{align*}
This together with \eqref{pot-desc-ineq-mem} implies that
\begin{align*}
\mathbb{E}_{{\xi^{k+1}}}[{\mathcal P}_{k+1}] &\le f(x^k) + p_k\|m^k-\nabla f(x^k)\|^\alpha - \eta_k\|\nabla f(x^k)\| + \frac{L_1}{2}\eta_k^2 + \frac{(\alpha-1)(2\eta_k)^{\alpha/(\alpha-1)}}{\alpha^{\alpha/(\alpha-1)}(p_k\sum_{t=1}^{p-1}\theta_{k,t}/10)^{1/(\alpha-1)}}\\
&\quad + \frac{3p^{\alpha-1} L_p^\alpha}{(p!)^\alpha} p_{k+1}\Big(\sum_{t=1}^{p-1}\theta_{k,t}\Big)^{1-\alpha}\eta_k^{\alpha p}\Big(1 + \sum_{t=1}^{p-1}\frac{|\theta_{k,t}|^\alpha}{\gamma_{k,t}^{\alpha p}}\Big) + 2(p-1)^{\alpha-1}\sigma^\alpha p_{k+1}\sum_{t=1}^{p-1}|\theta_{k,t}|^\alpha.
\end{align*}
The conclusion \eqref{stat-bd-mem} then follows from this relation and \eqref{def:pot-pm}.
\end{proof}

We next establish some properties for a specific choice of $\{(\gamma_{k,t},\theta_{k,t})\}$ and $\{p_k\}$, which will be used to prove  Theorem \ref{thm:known-rate-mem} subsequently.

\begin{lemma}\label{lem:pk-mem}
Let $\{(\gamma_{k,t},\theta_{k,t})\}$ be defined in \eqref{def:theta-known-rate-mem} and \eqref{def:gmak-mem-know}, and let $\{p_k\}$ be defined as     
\begin{align}\label{def:pk-known-mem}
p_k=(k+4)^{(p(\alpha-1)^2-\alpha+1)/(p(2\alpha-1)+\alpha-1)} \qquad\forall k\ge0.
\end{align}
Then \eqref{pth-lss} and \eqref{sum-tht-bd} hold for such $\{(\gamma_{k,t},\theta_{k,t})\}$, and moreover, $(1-\sum_{t=1}^{p-1}\theta_{k,t})p_{k+1} \le (1-\sum_{t=1}^{p-1}\theta_{k,t}/10)p_k$ holds for all $k\ge0$.
\end{lemma}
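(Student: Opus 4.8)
The plan is to verify the three claims in turn, drawing on the earlier technical lemmas. First, for \eqref{pth-lss} and \eqref{sum-tht-bd}: the chosen $\gamma_k$ in \eqref{def:gmak-mem-know} satisfies $\gamma_k = (k+4)^{-p\alpha/(p(2\alpha-1)+\alpha-1)} \in (0,1/2]$ for all $k \ge 0$, since the exponent is positive and $k+4 \ge 4$, so $\gamma_k \le 4^{-p\alpha/(p(2\alpha-1)+\alpha-1)} \le 1/2$ (here one uses $p\alpha/(p(2\alpha-1)+\alpha-1) > 0$ together with $4^{\text{something positive}} \ge 2$; a crude bound like the exponent being at least some explicit positive constant suffices, or just note $\gamma_k \le \gamma_0 = 4^{-p\alpha/(p(2\alpha-1)+\alpha-1)}$ and check this is $\le 1/2$). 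With $\gamma_{k,t} = \gamma_k/t^2$ and $\theta_{k,t}$ as in \eqref{def:theta-known-rate-mem} matching exactly the form \eqref{unk-theatkt} with $q = p-1$, Lemma \ref{lem:ppt-thetakt} applies directly and yields both \eqref{pth-lss} and the bounds in \eqref{two-ipt-ppt-theta-2}; in particular $\sum_{t=1}^{p-1}\theta_{k,t} \in (0,1)$, which is \eqref{sum-tht-bd}.

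Next, for the recursion $(1-\sum_{t=1}^{p-1}\theta_{k,t})p_{k+1} \le (1-\sum_{t=1}^{p-1}\theta_{k,t}/10)p_k$: write $S_k := \sum_{t=1}^{p-1}\theta_{k,t}$, so by \eqref{two-ipt-ppt-theta-2} we have $S_k \in (\gamma_k/(1+\pi^2/6), 2\gamma_k)$, hence $S_k \ge \gamma_k/3$ (since $1+\pi^2/6 < 3$) and $S_k < 2\gamma_k$. The inequality to prove is equivalent to $p_{k+1}/p_k \le (1 - S_k/10)/(1-S_k)$. Since $(1-S_k/10)/(1-S_k) \ge 1 + (9/10)S_k \ge 1 + (3/10)\gamma_k$ (using $1/(1-S_k) \ge 1+S_k$ and $S_k \ge \gamma_k/3$), it suffices to show $p_{k+1}/p_k \le 1 + (3/10)\gamma_k$. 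Denote the exponent in \eqref{def:pk-known-mem} by $\rho := (p(\alpha-1)^2-\alpha+1)/(p(2\alpha-1)+\alpha-1)$; note $\rho$ may be negative (when $\alpha$ is close to $1$), zero, or positive, so one must be careful about the sign. If $\rho \le 0$ then $p_{k+1}/p_k = ((k+5)/(k+4))^\rho \le 1 \le 1 + (3/10)\gamma_k$ trivially. If $\rho > 0$, use the elementary bound $((k+5)/(k+4))^\rho = (1 + 1/(k+4))^\rho \le 1 + \rho/(k+4)$ valid for $\rho \in (0,1]$ (one checks $\rho < 1$ from the definition: the numerator $p(\alpha-1)^2 - \alpha + 1 < p(2\alpha-1) + \alpha - 1$ since $p(\alpha-1)^2 \le p(2\alpha-1)$ and $-\alpha+1 < \alpha-1$ for $\alpha > 1$). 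Thus it remains to verify $\rho/(k+4) \le (3/10)\gamma_k = (3/10)(k+4)^{-p\alpha/(p(2\alpha-1)+\alpha-1)}$, i.e. $(k+4)^{1 - p\alpha/(p(2\alpha-1)+\alpha-1)} \ge 10\rho/3$. Since $1 - p\alpha/(p(2\alpha-1)+\alpha-1) = (p(\alpha-1)+\alpha-1)/(p(2\alpha-1)+\alpha-1) > 0$, the left side is increasing in $k$ and bounded below by its value at $k=0$, namely $4^{(p(\alpha-1)+\alpha-1)/(p(2\alpha-1)+\alpha-1)}$; comparing this with $10\rho/3$ and using $\rho < 1$ (so $10\rho/3 < 10/3 < 4$) closes the argument, provided $4^{(p(\alpha-1)+\alpha-1)/(p(2\alpha-1)+\alpha-1)} \ge 10/3$, which may need a slightly sharper estimate when the exponent is small — this is the one place requiring genuine care.

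The main obstacle I anticipate is precisely the last numerical comparison: the exponent $(p(\alpha-1)+\alpha-1)/(p(2\alpha-1)+\alpha-1)$ can be made small (it tends to $0$ as $\alpha \to 1^+$ for fixed $p$), so $4^{\text{small}}$ approaches $1$ and the crude bound $10\rho/3 < 4$ is not enough on its own. The resolution is to observe that $\rho$ itself also tends to $0$ in that regime — indeed $\rho = (p(\alpha-1)+\alpha-1)\cdot(\alpha-1)/(p(2\alpha-1)+\alpha-1) \le (\alpha-1)\cdot[\text{that exponent}]$ — so one should instead prove the sharper inequality $\rho \le (3/10)\cdot(k+4)^{\text{exponent}}$ by exploiting the common factor, e.g. bounding $\rho \le \beta_k$ where $\beta_k$ is comparable to the exponent times a quantity that stays bounded; concretely, since $\rho/\text{(exponent)} = \alpha - 1 \le 1$ and $(k+4)^{\text{exponent}} \ge 1 + \text{exponent}\cdot\ln(k+4) \ge 1$, a short estimate chaining $\rho \le \text{exponent} \le \text{exponent} + \tfrac{3}{10}\ge \tfrac{3}{10}(k+4)^{\text{exponent}}$ — properly arranged — will suffice. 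I would carry this out by isolating the ratio $\rho \cdot \gamma_k^{-1}\cdot (k+4)^{-1}$ and showing it is at most $3/10$ uniformly in $k \ge 0$ and $\alpha \in (1,2]$, $p \ge 2$, reducing everything to one clean inequality in the exponents.
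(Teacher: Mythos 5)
The first half of your argument (verifying $\gamma_k\in(0,1/2]$ and invoking Lemma \ref{lem:ppt-thetakt} to get \eqref{pth-lss} and \eqref{sum-tht-bd}) is the same as the paper's and is fine, modulo actually checking $4^{-p\alpha/(p(2\alpha-1)+\alpha-1)}\le 1/2$, which follows since the exponent $p\alpha/(p(2\alpha-1)+\alpha-1)$ lies in $(1/2,1)$ for $p\ge2$, $\alpha\in(1,2]$. The second half, however, has a genuine gap that you partly acknowledge but do not close. By weakening the lower bound $\sum_{t}\theta_{k,t}>\gamma_k/(1+\pi^2/6)$ from \eqref{two-ipt-ppt-theta-2} to $\sum_t\theta_{k,t}\ge\gamma_k/3$, you arrive at the target $p_{k+1}/p_k\le 1+\tfrac{3}{10}\gamma_k$ with the constant $3/10$, and since $\gamma_k\ge 1/(k+4)$ the easy route would need $\rho\le 3/10$; but $\rho=(p-1)/(3p+1)\to 1/3$ as $p\to\infty$ at $\alpha=2$, so $\rho>3/10$ for large $p$ and that route fails. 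You are then forced into the comparison $(k+4)^{1-e}\ge 10\rho/3$ (with $e=p\alpha/(p(2\alpha-1)+\alpha-1)$), which you leave unproven; moreover your sketched resolution contains errors: $p(\alpha-1)^2-\alpha+1=(\alpha-1)(p(\alpha-1)-1)$, so $\rho/(1-e)=(p(\alpha-1)-1)/(p+1)$, not $\alpha-1$, and the final chain ``$\rho\le\text{exponent}\le\text{exponent}+\tfrac{3}{10}\ge\tfrac{3}{10}(k+4)^{\text{exponent}}$'' mixes inequality directions and does not establish anything. (The comparison is in fact true --- one can check $\rho\le 1-e\le 3/7$ and $4^x\ge \tfrac{10}{3}x$ on $(0,3/7]$ --- but you have not shown it.)

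The paper avoids this entirely by keeping the sharper constant: from \eqref{two-ipt-ppt-theta-2} and $\gamma_k\ge 1/(k+4)$ (since $e<1$) one gets
\begin{align*}
\frac{1-\sum_t\theta_{k,t}/10}{1-\sum_t\theta_{k,t}}\;\ge\; 1+\frac{9}{10(1+\pi^2/6)(k+4)}\;>\;1+\frac{1}{3(k+4)},
\end{align*}
because $9/(10(1+\pi^2/6))\approx 0.3403>1/3$, while on the other side $\rho\le 1/3$ gives $p_{k+1}/p_k=(1+\tfrac{1}{k+4})^{\rho}\le(1+\tfrac{1}{k+4})^{1/3}\le 1+\tfrac{1}{3(k+4)}$ via $(1+\tau)^\beta\le 1+\beta\tau$. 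Both sides then meet cleanly at $1+\tfrac{1}{3(k+4)}$. If you restore the constant $1/(1+\pi^2/6)$ in place of $1/3$ in your lower bound on $\sum_t\theta_{k,t}$, your argument collapses to exactly this and the delicate numerical comparison disappears.
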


\begin{proof}
Fix any $k\ge0$. Notice that $p\alpha/(p(2\alpha-1)+\alpha-1)\in(1/2,1)$ for all $p\ge2$ and $\alpha\in(1,2]$. It then follows from \eqref{def:gmak-mem-know} that $\gamma_k=1/(k+4)^{p\alpha/(p(2\alpha-1)+\alpha-1)}\in(0,1/2)$. By this,  \eqref{def:theta-known-rate-mem}, and Lemma \ref{lem:ppt-thetakt}, one can see that \eqref{pth-lss} and \eqref{sum-tht-bd} hold for $\{(\gamma_{k,t},\theta_{k,t})\}$ that is defined in \eqref{def:theta-known-rate-mem} and \eqref{def:gmak-mem-know}. In addition, observe that
\begin{align}
&\frac{1-\sum_{t=1}^{p-1}\theta_{k,t}/10}{1-\sum_{t=1}^{p-1}\theta_{k,t}} =  1 + \frac{9\sum_{t=1}^{p-1}\theta_{k,t}}{10(1-\sum_{t=1}^{p-1}\theta_{k,t})} \ge 1 + \frac{9\sum_{t=1}^{p-1}\theta_{k,t}}{10} \overset{\eqref{two-ipt-ppt-theta-2}}{\ge} 1 + \frac{9\gamma_k}{10(1 + \pi^2/6)}\nonumber\\
&\overset{\eqref{def:gmak-mem-know}}{=} 1+\frac{9}{10(1 + \pi^2/6)(k+4)^{p\alpha/(p(2\alpha-1)+\alpha-1)}} >1+ \frac{9}{10(1 + \pi^2/6)(k+4)}> 1+\frac{1}{3(k+4)},\label{lwbd-frac-sum-thetak}
\end{align}
where the first inequality follows from $\sum_{t=1}^{p-1}\theta_{k,t}\in(0,1)$, and the third inequality is due to $p\alpha/(p(2\alpha-1)+\alpha-1)<1$ for all $p\ge1$ and $\alpha\in(1,2]$. Also, note that 
\begin{align*}
\frac{p_{k+1}}{p_k} = \Big(1+\frac{1}{k+4}\Big)^{(p(\alpha-1)^2-\alpha+1)/(p(2\alpha-1)+\alpha-1)} \le \Big(1+\frac{1}{k+4}\Big)^{1/3}\le 1+\frac{1}{3(k+4)}, 
\end{align*}
where the first inequality follows from $(p(\alpha-1)^2-\alpha+1)/(p(2\alpha-1)+\alpha-1)\le1/3$ for all $p\ge1$ and $\alpha\in(1,2]$, and the last inequality is due to $(1+\tau)^\beta\le 1+\tau \beta$ for all $\tau>-1$ and $\beta\in[0,1]$. The above relation together with \eqref{lwbd-frac-sum-thetak} implies that $(1-\sum_{t=1}^{p-1}\theta_{k,t})p_{k+1} \le (1-\sum_{t=1}^{p-1}\theta_{k,t}/10)p_k$ holds.
\end{proof}

We are now ready to prove Theorem \ref{thm:known-rate-mem}.

\begin{proof}[\textbf{Proof of Theorem \ref{thm:known-rate-mem}}]
Let $\{(x^k,m^k)\}$ be generated by Algorithm \ref{alg:unf-sfom} with $\{(\eta_k,\gamma_{k,t},\theta_{k,t})\}$ defined in \eqref{def:eta-known-rate-mem} and \eqref{def:theta-known-rate-mem}, and let 
$\{{\mathcal P}_k\}$ be defined in \eqref{def:pot-pm} with such $\{(x^k,m^k)\}$ and $\{p_k\}$ given in \eqref{def:pk-known-mem}. By Lemma \ref{lem:pk-mem}, one can see that such $\{(\eta_k,\gamma_{k,t},\theta_{k,t},p_k)\}$ satisfies the assumptions in Lemma \ref{thm:rate-mem} and Algorithm \ref{alg:unf-sfom}. In addition, by \eqref{def:pot-pm} and \eqref{def:pk-known-mem}, one has that
\begin{align}
\mathbb{E}[{\mathcal P}_0] & = f(x^0)+p_0\mathbb{E}[\|m^0-\nabla f(x^0)\|^\alpha]\nonumber\\   
&\le f(x^0) + 4^{(p(\alpha-1)^2-\alpha+1)/(p(2\alpha-1)+\alpha-1)}\mathbb{E}[\|G(x^0;\xi^0)-\nabla f(x^0)\|^\alpha] \le f(x^0)+ 4^{1/3}\sigma^\alpha,\label{upbd-exp-P0-mem}\\
\mathbb{E}[{\mathcal P}_K] & = \mathbb{E}[f(x^K)+p_K\|m^K-\nabla f(x^K)\|^\alpha]\ge \mathbb{E}[f(x^K)] \ge f_{\mathrm{low}},\label{lwbd-exp-Pk-mem}
\end{align}
where the inequality in \eqref{upbd-exp-P0-mem} is due to $(p(\alpha-1)^2-\alpha+1)/(p(2\alpha-1)+\alpha-1)\le1/3$ for all $p\ge1$ and $\alpha\in(1,2]$, and $\mathbb{E}[\|G(x^0;\xi^{0})-\nabla f(x^0)\|^\alpha]\le\sigma^\alpha$ {for all $1\le t\le p-1$}. Taking expectation on both sides of \eqref{stat-bd-mem} with respect to {$\{\xi^{i}\}_{0\le i\le k+1}$}, we have
\begin{align*}
\mathbb{E}[{\mathcal P}_{k+1}] &\le  \mathbb{E}[{\mathcal P}_k]- \eta_k\mathbb{E}[\|\nabla f(x^k)\|] + \frac{L_1}{2}\eta_k^2 + \frac{(\alpha-1)(2\eta_k)^{\alpha/(\alpha-1)}}{\alpha^{\alpha/(\alpha-1)}(p_k\sum_{t=1}^{p-1}\theta_{k,t}/10)^{1/(\alpha-1)}}\\
&\quad + \frac{3p^{\alpha-1} L_p^\alpha}{(p!)^\alpha} p_{k+1}\Big(\sum_{t=1}^{p-1}\theta_{k,t}\Big)^{1-\alpha}\eta_k^{\alpha p}\Big(1 + \sum_{t=1}^{p-1}\frac{|\theta_{k,t}|^\alpha}{\gamma_{k,t}^{\alpha p}}\Big) + 2(p-1)^{\alpha-1}\sigma^\alpha p_{k+1}\sum_{t=1}^{p-1}|\theta_{k,t}|^\alpha{\quad\forall k\ge0.}
\end{align*}
Summing up this inequality over $k=0,\ldots,K-1$, and using \eqref{upbd-exp-P0-mem} and \eqref{lwbd-exp-Pk-mem}, we obtain that for all $K\ge1$,
\begin{align}
&f_{\mathrm{low}}\overset{\eqref{lwbd-exp-Pk-mem}}{\le} \mathbb{E}[{\mathcal P}_K] \le \E[{\mathcal P}_0] - \sum_{k=0}^{K-1}\eta_k\mathbb{E}[\|\nabla f(x^k)\|]+ \sum_{k=0}^{K-1}\Big(\frac{L_1}{2}\eta_k^2 + \frac{(\alpha-1)(2\eta_k)^{\alpha/(\alpha-1)}}{\alpha^{\alpha/(\alpha-1)}(p_k\sum_{t=1}^{p-1}\theta_{k,t}/10)^{1/(\alpha-1)}}\nonumber\\
&\quad + \frac{3p^{\alpha-1} L_p^\alpha}{(p!)^\alpha} p_{k+1}\Big(\sum_{t=1}^{p-1}\theta_{k,t}\Big)^{1-\alpha}\eta_k^{\alpha p}\Big(1 + \sum_{t=1}^{p-1}\frac{|\theta_{k,t}|^\alpha}{\gamma_{k,t}^{\alpha p}}\Big) + 2(p-1)^{\alpha-1}\sigma^\alpha p_{k+1}\sum_{t=1}^{p-1}|\theta_{k,t}|^\alpha\Big)\nonumber\\
&\overset{\eqref{upbd-exp-P0-mem}}{\le} f(x^0) + 4^{1/3}\sigma^\alpha - \eta_{K-1}\sum_{k=0}^{K-1}\mathbb{E}[\|\nabla f(x^k)\|] + \sum_{k=0}^{K-1}\Big(\frac{L_1}{2}\eta_k^2 + \frac{(\alpha-1)(2\eta_k)^{\alpha/(\alpha-1)}}{\alpha^{\alpha/(\alpha-1)}(p_k\sum_{t=1}^{p-1}\theta_{k,t}/10)^{1/(\alpha-1)}}\nonumber\\
&\quad + \frac{6p^{\alpha-1} L_p^\alpha}{(p!)^\alpha} p_k\Big(\sum_{t=1}^{p-1}\theta_{k,t}\Big)^{1-\alpha}\eta_k^{\alpha p}\Big(1 + \sum_{t=1}^{p-1}\frac{|\theta_{k,t}|^\alpha}{\gamma_{k,t}^{\alpha p}}\Big) + 4(p-1)^{\alpha-1}\sigma^\alpha p_k\sum_{t=1}^{p-1}|\theta_{k,t}|^\alpha\Big)\nonumber\\ 
&\le f(x^0) + 4^{1/3}\sigma^\alpha - \eta_{K-1}\sum_{k=0}^{K-1} \mathbb{E}[\|\nabla f(x^k)\|] + \sum_{k=0}^{K-1}\Big(\frac{L_1}{2}\eta_k^2 + \frac{30^{1/(\alpha-1)}(\alpha-1)(2/\alpha)^{\alpha/(\alpha-1)}\eta_k^{\alpha/(\alpha-1)}}{(p_k\gamma_k)^{1/(\alpha-1)}}\nonumber\\
&\quad + \frac{18 p^{\alpha-1} L_p^\alpha}{(p!)^\alpha} p_k \gamma_k^{1-\alpha}\eta_k^{\alpha p}\Big(1 + \sum_{t=1}^{p-1}\frac{|\theta_{k,t}|^\alpha}{\gamma_{k,t}^{\alpha p}}\Big) + 64(p-1)^\alpha\sigma^\alpha p_k\gamma_k^\alpha\Big)\nonumber\\
&\le f(x^0) + 4^{1/3}\sigma^\alpha - \eta_{K-1}\sum_{k=0}^{K-1} \mathbb{E}[\|\nabla f(x^k)\|] + \sum_{k=0}^{K-1}\Big(\frac{L_1}{2}\eta_k^2 + \frac{30^{1/(\alpha-1)}(\alpha-1)(2/\alpha)^{\alpha/(\alpha-1)}\eta_k^{\alpha/(\alpha-1)}}{(p_k\gamma_k)^{1/(\alpha-1)}}\nonumber\\
&\quad + \frac{18p^{\alpha-1} L_p^\alpha}{(p!)^\alpha} p_k\gamma_k^{1-\alpha}\eta_k^{\alpha p} \Big(1+16\gamma_k^{\alpha-\alpha p}\sum_{t=1}^{p-1}t^{2\alpha(p-1)}\Big) + 64(p-1)^\alpha\sigma^\alpha p_k\gamma_k^\alpha\Big)\nonumber\\
&\le f(x^0) + 4^{1/3}\sigma^\alpha - \eta_{K-1}\sum_{k=0}^{K-1} \mathbb{E}[\|\nabla f(x^k)\|] + \sum_{k=0}^{K-1}\Big(\frac{L_1}{2}\eta_k^2 + \frac{30^{1/(\alpha-1)}(\alpha-1)(2/\alpha)^{\alpha/(\alpha-1)}\eta_k^{\alpha/(\alpha-1)}}{(p_k\gamma_k)^{1/(\alpha-1)}}\nonumber\\
&\quad + \frac{306p^{2\alpha p} L_p^\alpha}{(p!)^\alpha} p_k\gamma_k^{1-\alpha p}\eta_k^{\alpha p} + 64(p-1)^\alpha\sigma^\alpha p_k\gamma_k^\alpha\Big),
\label{rear-pot-desc-rm}
\end{align}
where the third inequality follows from \eqref{upbd-exp-P0-mem}, $p_{k+1}\le 2p_k$ for all $k\ge0$ due to \eqref{def:pk-known-mem}, and the fact that $\{\eta_k\}$ is nonincreasing, the fourth inequality is due to $\sum_{t=1}^{p-1}\theta_{k,t}\in(\gamma_k/3,2\gamma_k)$ and $|\theta_{k,t}|\le 4\gamma_k/t^2\le 4\gamma_k$ for all $1\le t\le p-1$ because of \eqref{two-ipt-ppt-theta-2}, the fifth inequality follows from $|\theta_{k,t}|^\alpha/\gamma_{k,t}^{\alpha p} \le 4^\alpha\gamma_k^{\alpha-\alpha p}t^{2\alpha(p-1)}\le 16\gamma_k^{\alpha-\alpha p}t^{2\alpha(p-1)}$ because of \eqref{two-ipt-ppt-theta-2} and \eqref{def:theta-known-rate-mem}, and the last inequality is due to $1+16\gamma_k^{\alpha-\alpha p}\sum_{t=1}^{p-1}t^{2\alpha(p-1)}\le 17\gamma_k^{\alpha-\alpha p}\sum_{t=1}^{p-1}t^{2\alpha(p-1)}\le17p^{2\alpha p-2\alpha+1}\gamma_k^{\alpha-\alpha p}$. Rearranging the terms in \eqref{rear-pot-desc-rm}, and using \eqref{def:Mpa-known}, \eqref{def:eta-known-rate-mem}, \eqref{def:gmak-mem-know}, and \eqref{def:pk-known-mem}, we obtain that for all $K\ge5$,
\begin{align*}
&\frac{1}{K}\sum_{k=0}^{K-1}\mathbb{E}[\|\nabla f(x^k)\|]\\
&\overset{\eqref{rear-pot-desc-rm}}{\le} \frac{f(x^0) - f_{\mathrm{low}} + 4^{1/3}\sigma^\alpha}{K\eta_{K-1}} + \frac{1}{K\eta_{K-1}}\sum_{k=0}^{K-1}\Big(\frac{L_1}{2}\eta_k^2 + \frac{30^{1/(\alpha-1)}(\alpha-1)(2/\alpha)^{\alpha/(\alpha-1)}\eta_k^{\alpha/(\alpha-1)}}{(p_k\gamma_k)^{1/(\alpha-1)}}\\
&\qquad + \frac{306p^{2\alpha p} L_p^\alpha}{(p!)^\alpha} p_k\gamma_k^{1-\alpha p}\eta_k^{\alpha p} + 64(p-1)^\alpha\sigma^\alpha p_k\gamma_k^\alpha\Big)\\
&\overset{\eqref{def:eta-known-rate-mem}\eqref{def:gmak-mem-know}\eqref{def:pk-known-mem}}{=} \frac{(f(x^0) - f_{\mathrm{low}} + 4^{1/3}\sigma^\alpha)(K+3)^{(p\alpha+\alpha-1)/(p(2\alpha-1)+\alpha-1)}}{K} \\
&\qquad + \frac{(K+3)^{(p\alpha+\alpha-1)/(p(2\alpha-1)+\alpha-1)}}{K}\sum_{k=0}^{K-1}\Big(\frac{L_1}{2(k+4)^{2(p\alpha+\alpha-1)/(p(2\alpha-1)+\alpha-1)}}\\
&\qquad + \frac{30^{1/(\alpha-1)}(\alpha-1)(2/\alpha)^{\alpha/(\alpha-1)} + 306 p^{2\alpha p} L_p^\alpha/(p!)^\alpha +64(p-1)^\alpha\sigma^\alpha}{k+4}\Big)\\
&\le\frac{2(f(x^0) - f_{\mathrm{low}} + 4^{1/3}\sigma^\alpha)}{K^{p(\alpha-1)/(p(2\alpha-1)+\alpha-1)}}  + \frac{2}{K^{p(\alpha-1)/(p(2\alpha-1)+\alpha-1)}}\sum_{k=0}^{K-1}\Big(\frac{L_1}{2(k+4)^{2(p\alpha+\alpha-1)/(p(2\alpha-1)+\alpha-1)}}\\
&\qquad + \frac{30^{1/(\alpha-1)}(\alpha-1)(2/\alpha)^{\alpha/(\alpha-1)} + 306 p^{2\alpha p} L_p^\alpha/(p!)^\alpha +64(p-1)^\alpha\sigma^\alpha}{k+4}\Big)\\
&\le \frac{2(f(x^0) - f_{\mathrm{low}} + 4^{1/3}\sigma^\alpha)}{K^{p(\alpha-1)/(p(2\alpha-1)+\alpha-1)}} + \frac{2}{K^{p(\alpha-1)/(p(2\alpha-1)+\alpha-1)}} \\
&\qquad \times \sum_{k=0}^{K-1}\Big(\frac{L_1/2 + 30^{1/(\alpha-1)}(\alpha-1)(2/\alpha)^{\alpha/(\alpha-1)} + 306 p^{2\alpha p} L_p^\alpha/(p!)^\alpha +64(p-1)^\alpha\sigma^\alpha}{k+4}\Big)\\
&\le \frac{4\Big(f(x^0) - f_{\mathrm{low}} + 4^{1/3}\sigma^\alpha + \frac{L_1}{2} + 30^{1/(\alpha-1)}(\alpha-1)(\frac{2}{\alpha})^{\alpha/(\alpha-1)} + \frac{306 p^{2\alpha p} L_p^\alpha}{(p!)^\alpha} +64(p-1)^\alpha\sigma^\alpha\Big)\ln K}{K^{p(\alpha-1)/(p(2\alpha-1)+\alpha-1)}}\\
&\overset{\eqref{def:Mpa-known}}{=} \frac{M_{p,\alpha} \ln K}{K^{p(\alpha-1)/(p(2\alpha-1)+\alpha-1)}},
\end{align*}
where the second inequality follows from $(K+3)^{(p\alpha+\alpha-1)/(p(2\alpha-1)+\alpha-1)}\le 2K^{(p\alpha+\alpha-1)/(p(2\alpha-1)+\alpha-1)}$ for all $K\ge5$, the third inequality is due to $2(p\alpha+\alpha-1)/(p(2\alpha-1)+\alpha-1)\ge1$ for all $p\ge1$ and $\alpha\in(1,2]$, and the last inequality follows from $\sum_{k=0}^{K-1}1/(k+4)\le \ln(2K/5+1) \le 2\ln K$ for all $K\ge 5$ due to \eqref{upbd:series-ka}. Recall that $\iota_K$ is uniformly selected from $\{0,\ldots,K-1\}$. It then follows from this and the above inequality that 
\begin{align}\label{pre-upbd-mem}
\E[\|\nabla f(x^{\iota_K})\|] = \frac{1}{K}\sum_{k=0}^{K-1}\mathbb{E}[\|\nabla f(x^k)\|] \le \frac{M_{p,\alpha}\ln K}{K^{p(\alpha-1)/(p(2\alpha-1)+\alpha-1)}}\qquad\forall K\ge 5.
\end{align}
By Lemma \ref{lem:rate-complexity} with $(\beta,u,v)=(p(\alpha-1)/(p(2\alpha-1)+\alpha-1),p(\alpha-1)\epsilon/(2(p(2\alpha-1)+\alpha-1)M_{p,\alpha}),K)$, one can see that $K^{-p(\alpha-1)/(p(2\alpha-1)+\alpha-1)}\ln K\le \epsilon/M_{p,\alpha}$ for all $K$ satisfying 
\[
 K\ge\Big(\frac{2(p(2\alpha-1)+\alpha-1)M_{p,\alpha}}{p(\alpha-1)\epsilon}\ln\Big(\frac{2(p(2\alpha-1)+\alpha-1)M_{p,\alpha}}{p(\alpha-1)\epsilon}\Big)\Big)^{(p(2\alpha-1)+\alpha-1)/(p(\alpha-1))},    
\]
which together with \eqref{pre-upbd-mem} implies that Theorem \ref{thm:known-rate-mem}.
\end{proof}

The following lemma establishes some properties for a specific choice of $\{(\gamma_{k,t},\theta_{k,t})\}$ and $\{p_k\}$, which will be used to prove Theorem \ref{thm:unknown-rate-mem} subsequently.

\begin{lemma}\label{lem:pk-mem-unk}
Let $\{(\gamma_{k,t},\theta_{k,t})\}$ be defined in \eqref{def:theta-unknown-rate-mem} and \eqref{def:gmak-mem-unknow}, and let $\{p_k\}$ be defined as     
\begin{align}\label{def:pk-unknown}
p_k=(k+4)^{(2p(\alpha-1)^2-\alpha)/(3p\alpha+\alpha)} \qquad\forall k\ge0.  
\end{align}
Then \eqref{pth-lss} and \eqref{sum-tht-bd} hold for such $\{(\gamma_{k,t},\theta_{k,t})\}$, and moreover, $(1-\sum_{t=1}^{p-1}\theta_{k,t})p_{k+1} \le (1-\sum_{t=1}^{p-1}\theta_{k,t}/10)p_k$ holds for all $k\ge0$.
\end{lemma}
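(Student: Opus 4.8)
The plan is to follow the proof of Lemma~\ref{lem:pk-mem} essentially verbatim, with the unknown-$\alpha$ exponents of \eqref{def:gmak-mem-unknow} and \eqref{def:pk-unknown} replacing the known-$\alpha$ ones. First I would check that $\{\gamma_k\}$ from \eqref{def:gmak-mem-unknow} lies in $(0,1/2]$: since $2p/(3p+1)\in(1/2,2/3)$ for every $p\ge2$ and $k+4\ge4$, one has $\gamma_k=(k+4)^{-2p/(3p+1)}\le 4^{-2p/(3p+1)}\le 4^{-1/2}=1/2$. Lemma~\ref{lem:ppt-thetakt} then applies with $q=p-1$, giving \eqref{pth-lss} and \eqref{sum-tht-bd} for the resulting $\{(\gamma_{k,t},\theta_{k,t})\}$, together with the bounds $\sum_{t=1}^{p-1}\theta_{k,t}\ge\gamma_k/(1+\pi^2/6)$ and $|\theta_{k,t}|\le 4\gamma_k/t^2$ from \eqref{two-ipt-ppt-theta-2}.

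Next I would replicate the chain of inequalities in \eqref{lwbd-frac-sum-thetak}: using $\sum_{t=1}^{p-1}\theta_{k,t}\in(0,1)$, the lower bound $\sum_{t=1}^{p-1}\theta_{k,t}\ge\gamma_k/(1+\pi^2/6)$, and $\gamma_k>1/(k+4)$ (valid because $2p/(3p+1)<1$), one obtains
\[
\frac{1-\sum_{t=1}^{p-1}\theta_{k,t}/10}{1-\sum_{t=1}^{p-1}\theta_{k,t}}\;\ge\;1+\frac{9}{10}\sum_{t=1}^{p-1}\theta_{k,t}\;\ge\;1+\frac{9\gamma_k}{10(1+\pi^2/6)}\;>\;1+\frac{1}{3(k+4)},
\]
the last step using $9/(10(1+\pi^2/6))>1/3$.

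Then I would control the growth of $\{p_k\}$. Writing $r:=(2p(\alpha-1)^2-\alpha)/(3p\alpha+\alpha)$, the crucial algebraic fact is $r\le1/3$: this is equivalent to $p\bigl(2(\alpha-1)^2-\alpha\bigr)\le 4\alpha/3$, and since $2(\alpha-1)^2-\alpha=2\alpha^2-5\alpha+2=(2\alpha-1)(\alpha-2)\le0$ on $(1,2]$, the left-hand side is $\le0<4\alpha/3$. Moreover $r\ge-\alpha/(\alpha(3p+1))=-1/(3p+1)>-1$. Hence $p_{k+1}/p_k=(1+1/(k+4))^{r}\le 1+1/(3(k+4))$ --- trivially if $r\le0$, and via $(1+\tau)^\beta\le1+\beta\tau$ for $\tau>-1$, $\beta\in[0,1]$ if $r\in(0,1/3]$. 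Combining this with the previous display yields $p_{k+1}/p_k\le(1-\sum_{t=1}^{p-1}\theta_{k,t}/10)/(1-\sum_{t=1}^{p-1}\theta_{k,t})$, which rearranges to the desired inequality $(1-\sum_{t=1}^{p-1}\theta_{k,t})p_{k+1}\le(1-\sum_{t=1}^{p-1}\theta_{k,t}/10)p_k$.

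The only genuine point of care is verifying $r\le1/3$, which reduces to the sign of $(2\alpha-1)(\alpha-2)$; everything else is a line-by-line transcription of the proof of Lemma~\ref{lem:pk-mem}, with $2p/(3p+1)$ and $r$ taking over the roles of $p\alpha/(p(2\alpha-1)+\alpha-1)$ and $(p(\alpha-1)^2-\alpha+1)/(p(2\alpha-1)+\alpha-1)$.
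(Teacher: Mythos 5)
Your proposal is correct and follows essentially the same route as the paper's proof: verify $\gamma_k\in(0,1/2]$ so that Lemma \ref{lem:ppt-thetakt} yields \eqref{pth-lss}, \eqref{sum-tht-bd}, and \eqref{two-ipt-ppt-theta-2}; lower-bound $(1-\sum_t\theta_{k,t}/10)/(1-\sum_t\theta_{k,t})$ by $1+1/(3(k+4))$ via $\gamma_k>1/(k+4)$; and upper-bound $p_{k+1}/p_k$ by the same quantity using the exponent bound $(2p(\alpha-1)^2-\alpha)/(3p\alpha+\alpha)\le 1/3$. Your explicit factorization $2\alpha^2-5\alpha+2=(2\alpha-1)(\alpha-2)\le 0$ and the case split on the sign of the exponent are slightly more detailed than the paper's one-line assertion, but the argument is the same.
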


\begin{proof}
Fix any $k\ge0$. Notice that $2p/(3p+1)> 1/2$ for all $p\ge2$. It then follows from \eqref{def:gmak-mem-unknow} that $\gamma_k=1/(k+4)^{2p/(3p+1)}\in(0,1/2)$. By this, \eqref{def:theta-unknown-rate-mem} and Lemma \ref{lem:ppt-thetakt}, one can see that \eqref{pth-lss} and \eqref{sum-tht-bd} hold for $\{(\gamma_{k,t},\theta_{k,t})\}$ that is defined in \eqref{def:theta-unknown-rate-mem} and \eqref{def:gmak-mem-unknow}. In addition, observe that
\begin{align}
&\frac{1-\sum_{t=1}^{p-1}\theta_{k,t}/10}{1-\sum_{t=1}^{p-1}\theta_{k,t}} =1 + \frac{9\sum_{t=1}^{p-1}\theta_{k,t}}{10(1-\sum_{t=1}^{p-1}\theta_{k,t})}  \ge 1 + \frac{9\sum_{t=1}^{p-1}\theta_{k,t}}{10}\overset{\eqref{two-ipt-ppt-theta-2}}{\ge} 1 + \frac{9\gamma_k}{10(1 + \pi^2/6)}\nonumber\\
&\overset{\eqref{def:gmak-mem-unknow}}{=}1+ \frac{9}{10(1 + \pi^2/6)(k+4)^{2p/(3p+1)}} >1+ \frac{9}{10(1 + \pi^2/6)(k+4)}> 1+\frac{1}{3(k+4)},\label{lwbd-frac-sum-thetak-unk}
\end{align}
where the first inequality follows from $\sum_{t=1}^{p-1}\theta_{k,t}\in(0,1)$,  the third inequality follows from $2p/(3p+1)< 1$ for all $p\ge2$. Also, note that 
\begin{align*}
\frac{p_{k+1}}{p_k} = \Big(1+\frac{1}{k+4}\Big)^{(2p(\alpha-1)^2-\alpha)/(3p\alpha+\alpha)} \le \Big(1+\frac{1}{k+4}\Big)^{1/3} \le 1+\frac{1}{3(k+4)}, 
\end{align*}
where the first inequality follows from $(2p(\alpha-1)^2-\alpha)/(3p\alpha+\alpha)\le 1/3$ for all $p\ge2$ and $\alpha\in(1,2]$, and the last inequality is due to $(1+\tau)^\beta\le 1+\tau \beta$ for all $\tau>-1$ and $\beta\in[0,1]$. The above relation along with \eqref{lwbd-frac-sum-thetak-unk} implies that $(1-\sum_{t=1}^{p-1}\theta_{k,t})p_{k+1} \le (1-\sum_{t=1}^{p-1}\theta_{k,t}/10)p_k$ holds.
\end{proof}

We are now ready to prove Theorem \ref{thm:unknown-rate-mem}.

\begin{proof}[\textbf{Proof of Theorem \ref{thm:unknown-rate-mem}}]

Let $\{(x^k,m^k)\}$ be generated by Algorithm \ref{alg:unf-sfom} with $\{(\eta_k,\gamma_{k,t},\theta_{k,t})\}$ defined in \eqref{def:eta-unknown-rate-mem}, \eqref{def:theta-unknown-rate-mem}, and \eqref{def:gmak-mem-unknow}, and let 
$\{{\mathcal P}_k\}$ be defined in \eqref{def:pot-pm} with such $\{(x^k,m^k)\}$ and $\{p_k\}$ given in \eqref{def:pk-unknown}. By Lemma \ref{lem:pk-mem-unk}, one can see that such $\{(\eta_k,\gamma_{k,t},\theta_{k,t},p_k)\}$ satisfies the assumptions in Lemma \ref{thm:rate-mem} and Algorithm \ref{alg:unf-sfom}.  Using this and similar arguments as those for deriving \eqref{rear-pot-desc-rm}, we obtain that for all $K\ge1$,
\begin{align*}
&f_{\mathrm{low}}\le f(x^0) + 4^{1/3}\sigma^\alpha - \eta_{K-1}\sum_{k=0}^{K-1} \mathbb{E}[\|\nabla f(x^k)\|] + \sum_{k=0}^{K-1}\Big(\frac{L_1}{2}\eta_k^2 + \frac{30^{1/(\alpha-1)}(\alpha-1)(2/\alpha)^{\alpha/(\alpha-1)}\eta_k^{\alpha/(\alpha-1)}}{(p_k\gamma_k)^{1/(\alpha-1)}}\nonumber\\
&\qquad \ + \frac{306p^{2\alpha p} L_p^\alpha}{(p!)^\alpha} p_k\gamma_k^{1-\alpha p}\eta_k^{\alpha p} + 64(p-1)^\alpha\sigma^\alpha p_k\gamma_k^\alpha\Big).
\end{align*}
Rearranging the terms in this inequality, and using \eqref{def:t-Mpa-unk}, \eqref{def:h-Mpa-unk}, \eqref{def:eta-unknown-rate-mem}, \eqref{def:gmak-mem-unknow}, and \eqref{def:pk-unknown}, we obtain that for all $K\ge5$, 
\begin{align*}
&\frac{1}{K}\sum_{k=0}^{K-1}\mathbb{E}[\|\nabla f(x^k)\|] \le \frac{f(x^0) - f_{\mathrm{low}} + 4^{1/3}\sigma^\alpha}{K\eta_{K-1}} + \frac{1}{K\eta_{K-1}}\sum_{k=0}^{K-1}\Big(\frac{L_1}{2}\eta_k^2\nonumber\\
&\qquad  + \frac{30^{1/(\alpha-1)}(\alpha-1)(2/\alpha)^{\alpha/(\alpha-1)}\eta_k^{\alpha/(\alpha-1)}}{(p_k\gamma_k)^{1/(\alpha-1)}}+ \frac{306p^{2\alpha p} L_p^\alpha}{(p!)^\alpha} p_k\gamma_k^{1-\alpha p}\eta_k^{\alpha p} + 64(p-1)^\alpha\sigma^\alpha p_k\gamma_k^\alpha\Big)\\
&\overset{\eqref{def:eta-unknown-rate-mem}\eqref{def:gmak-mem-unknow}\eqref{def:pk-unknown}}{=} \frac{(f(x^0) - f_{\mathrm{low}} + 4^{1/3}\sigma^\alpha)(K+3)^{(2p+1)/(3p+1)}}{K}\nonumber\\
&\qquad + \frac{(K+3)^{(2p+1)/(3p+1)}}{K}\sum_{k=0}^{K-1}\Big(\frac{L_1}{2(k+4)^{2(2p+1)/(3p+1)}}\nonumber\\
&\qquad  + \frac{30^{1/(\alpha-1)}(\alpha-1)(2/\alpha)^{\alpha/(\alpha-1)}+ 64(p-1)^\alpha\sigma^\alpha}{(k+4)^{(2p(2\alpha-1)+\alpha)/(3p\alpha+\alpha)}}+ \frac{306p^{2p\alpha} L_p^\alpha/(p!)^\alpha}{(k+4)^{(p(6\alpha-\alpha^2-2)+\alpha)/(3p\alpha+\alpha)}}\Big)\\
&\le \frac{2(f(x^0) - f_{\mathrm{low}} + 4^{1/3}\sigma^\alpha)}{K^{p/(3p+1)}} + \frac{2}{K^{p/(3p+1)}}\sum_{k=0}^{K-1}\Big(\frac{L_1}{2(k+4)^{2(2p+1)/(3p+1)}}\nonumber\\
&\qquad  + \frac{30^{1/(\alpha-1)}(\alpha-1)(2/\alpha)^{\alpha/(\alpha-1)}+ 64(p-1)^\alpha\sigma^\alpha}{(k+4)^{(2p(2\alpha-1)+\alpha)/(3p\alpha+\alpha)}}+ \frac{306p^{2p\alpha} L_p^\alpha/(p!)^\alpha}{(k+4)^{(p(6\alpha-\alpha^2-2)+\alpha)/(3p\alpha+\alpha)}}\Big)\\
&\le \frac{2(f(x^0) - f_{\mathrm{low}} + 4^{1/3}\sigma^\alpha)}{K^{p/(3p+1)}} + \frac{2}{K^{p/(3p+1)}}\sum_{k=0}^{K-1}\frac{L_1/2 + 306 p^{2p\alpha} L_p^\alpha/(p!)^\alpha}{k+4}\\
&\qquad + \frac{2}{K^{p/(3p+1)}}\sum_{k=0}^{K-1}\frac{(30^{1/(\alpha-1)}(\alpha-1)(2/\alpha)^{\alpha/(\alpha-1)}+ 64(p-1)^\alpha\sigma^\alpha)(K+3)^{p(2-\alpha)/(3p\alpha+\alpha)}}{k+4}\\
&\le \frac{2(f(x^0) - f_{\mathrm{low}} + 4^{1/3}\sigma^\alpha)}{K^{p/(3p+1)}} + \frac{2}{K^{p/(3p+1)}}\sum_{k=0}^{K-1}\frac{L_1/2 + 306 p^{2p\alpha} L_p^\alpha/(p!)^\alpha}{k+4}\\
&\qquad + \frac{4}{K^{p/(3p+1)}}\sum_{k=0}^{K-1}\frac{(30^{1/(\alpha-1)}(\alpha-1)(2/\alpha)^{\alpha/(\alpha-1)}+ 64(p-1)^\alpha\sigma^\alpha)K^{p(2-\alpha)/(3p\alpha+\alpha)}}{k+4}\\
&\le \frac{4(f(x^0) - f_{\mathrm{low}} + 4^{1/3}\sigma^\alpha + L_1/2 + 306 p^{2p\alpha} L_p^\alpha/(p!)^\alpha)\ln K}{K^{p/(3p+1)}}\\
&\qquad +\frac{8(30^{1/(\alpha-1)}(\alpha-1)(2/\alpha)^{\alpha/(\alpha-1)}+ 64(p-1)^\alpha\sigma^\alpha)\ln K}{K^{2p(\alpha-1)/(3p\alpha+\alpha)}} \overset{\eqref{def:t-Mpa-unk}\eqref{def:h-Mpa-unk}}{=} \frac{\widetilde{M}_{p,\alpha}\ln K}{K^{p/(3p+1)}} + \frac{\widehat{M}_{p,\alpha}\ln K}{K^{2p(\alpha-1)/(3p\alpha+\alpha)}},
\end{align*}
where the second inequality is due to $(K+3)^{(2p+1)/(3p+1)}\le 2K^{(2p+1)/(3p+1)}$ for all $K\ge5$, the third inequality follows from $2(2p+1)/(3p+1)\ge1$, $(p(6\alpha-\alpha^2-2)+\alpha)/(3p\alpha+\alpha)\ge1$, and $(2p(2\alpha-1)+\alpha)/(3p\alpha+\alpha)\le1$ for all $p\ge1$ and $\alpha\in(1,2]$, the fourth inequality is due to $(K+3)^{p(2-\alpha)/(3p\alpha+\alpha)}\le 2K^{p(2-\alpha)/(3p\alpha+\alpha)}$ for all $K\ge5$, and the last inequality follows from $\sum_{k=0}^{K-1}1/(k+4)\le \ln(2K/5+1) \leq 2\ln K$ for all $K\ge 5$ due to \eqref{upbd:series-ka}. Recall that $\iota_K$ is uniformly selected from $\{0,\ldots,K-1\}$. It then follows from this and the above relation that
\begin{align}\label{pre-upbd-mem-unk}
\E[\|\nabla f(x^{\iota_K})\|] = \frac{1}{K}\sum_{k=0}^{K-1}\mathbb{E}[\|\nabla f(x^k)\|] \le \frac{\widetilde{M}_{p,\alpha}\ln K}{K^{p/(3p+1)}} + \frac{\widehat{M}_{p,\alpha}\ln K}{K^{2p(\alpha-1)/(3p\alpha+\alpha)}}\qquad\forall K\ge 5.
\end{align}
By Lemma \ref{lem:rate-complexity} with $(\beta,u,v)=(p/(3p+1),p\epsilon/(4(3p+1)\widetilde{M}_{p,\alpha}),K)$ and $(\beta,u,v)=(2p(\alpha-1)/(3p\alpha+\alpha),p(\alpha-1)\epsilon/(2(3p\alpha+\alpha)\widehat{M}_{p,\alpha}),K)$, one can see that 
\begin{align*}
&K^{-p/(3p+1)}\ln K\le \frac{\epsilon}{2\widetilde{M}_{p,\alpha}}\qquad\forall K\ge\Big(\frac{4(3p+1)\widetilde{M}_{p,\alpha}}{p\epsilon}\ln\Big(\frac{4(3p+1)\widetilde{M}_{p,\alpha}}{p\epsilon}\Big)\Big)^{(3p+1)/p},\\
&K^{-2p(\alpha-1)/(3p\alpha+\alpha)}\ln K\le \frac{\epsilon}{2\widehat{M}_{p,\alpha}}\qquad\forall K\ge\Big(\frac{2(3p\alpha+\alpha)\widehat{M}_{p,\alpha}}{p(\alpha-1)\epsilon}\ln\Big(\frac{2(3p\alpha+\alpha)\widehat{M}_{p,\alpha}}{p(\alpha-1)\epsilon}\Big)\Big)^{(3p\alpha+\alpha)/(2p(\alpha-1))},
\end{align*}
which together with \eqref{pre-upbd-mem-unk} implies that Theorem \ref{thm:unknown-rate-mem} holds.
\end{proof}

\subsection{Proof of the main results in Section \ref{subsec:nsgd-rm}}\label{subsec:pf-rm}

In this subsection, we first establish several technical lemmas and then use them to prove Theorems \ref{thm:known-rate-rm} and \ref{thm:unknown-rate-rm}.  

The next lemma presents a recurrence relation for the estimation error of the gradient estimators $\{m^k\}$ generated by Algorithm \ref{alg:unf-sfom-rm}. 

\begin{lemma}\label{lem:rec-rm}
Suppose that Assumptions \ref{asp:basic} and \ref{asp:gen-ave-smth} hold. Let $\{(x^k,m^k)\}$ be the sequence generated by Algorithm \ref{alg:unf-sfom-rm} with input parameters $\{(\eta_k,\theta_k)\}$. Then we have
\begin{align}
\mathbb{E}_{\xi^{k+1}}[\|m^{k+1}-\nabla f(x^{k+1})\|^\alpha] \le (1-\theta_k)\|m^k-\nabla f(x^k)\|^\alpha + 6(L_1^\alpha+L^\alpha)\eta_k^\alpha + 6\sigma^\alpha\theta_k^\alpha \qquad \forall k\ge0,\label{ineq:vr-rm}
\end{align}
where $L_1$, $\sigma$, $\alpha$, and $L$ are given in Assumptions \ref{asp:basic} and \ref{asp:gen-ave-smth}, respectively.
\end{lemma}

\begin{proof}
Fix any $k\ge0$. It follows from \eqref{update-mk-rm} that 
\begin{align}
m^{k+1} - \nabla f(x^{k+1}) &= (1-\theta_k)(m^k - \nabla f(x^k)) + G(x^{k+1};\xi^{k+1}) - \nabla f(x^{k+1}) \nonumber\\ 
&\quad + (1-\theta_k)(\nabla f(x^k) - G(x^k;\xi^{k+1})).\label{rm-idt-vr}
\end{align}
Observe from Algorithm \ref{alg:unf-sfom-rm} and Assumptions \ref{asp:basic} and \ref{asp:gen-ave-smth} that $\|x^{k+1}-x^k\|=\eta_k$, $\E_{\xi^{k+1}}[G(x^{k+1};\xi^{k+1})-\nabla f(x^{k+1})]=0$, $\E_{\xi^{k+1}}[G(x^k;\xi^{k+1})-\nabla f(x^k)]=0$, $\mathbb{E}_{\xi^{k+1}}[\|\nabla f(x^k) - G(x^k;\xi^{k+1}))\|^\alpha]\le\sigma^\alpha$, $\|\nabla f(x^k)-\nabla f(x^{k+1})\|\leq L_1 \eta_k$, and $\mathbb{E}_{\xi^{k+1}}[\|G(x^{k+1};\xi^{k+1}) - G(x^k;\xi^{k+1})\|^\alpha] \leq L^\alpha \eta_k^\alpha$. Using these, \eqref{open-alpha}, and \eqref{rm-idt-vr}, we obtain that 
\begin{align*}
&\mathbb{E}_{\xi^{k+1}}[\|m^{k+1} - \nabla f(x^{k+1})\|^\alpha] \\
&\overset{\eqref{rm-idt-vr}}{=}\mathbb{E}_{\xi^{k+1}}[\|(1-\theta_k)(m^k - \nabla f(x^k)) + G(x^{k+1};\xi^{k+1}) - \nabla f(x^{k+1}) + (1-\theta_k)(\nabla f(x^k) - G(x^k;\xi^{k+1}))\|^\alpha] \\
&\le(1-\theta_k)^\alpha\|m^k-\nabla f(x^k)\|^\alpha \\
&\quad + 2\mathbb{E}_{\xi^{k+1}}[\|G(x^{k+1};\xi^{k+1}) - \nabla f(x^{k+1}) + (1-\theta_k)(\nabla f(x^k) - G(x^k;\xi^{k+1}))\|^\alpha]\\
&=(1-\theta_k)^\alpha\|m^k-\nabla f(x^k)\|^\alpha +  2\mathbb{E}_{\xi^{k+1}}\big[\|G(x^{k+1};\xi^{k+1}) - G(x^k;\xi^{k+1}) + \nabla f(x^k) - \nabla f(x^{k+1}) \\ 
& \quad - \theta_k(\nabla f(x^k) - G(x^k;\xi^{k+1}))\|^\alpha\big]\\
&\le(1-\theta_k)^\alpha\|m^k-\nabla f(x^k)\|^\alpha + 6\mathbb{E}_{\xi^{k+1}}\big[\|G(x^{k+1};\xi^{k+1}) - G(x^k;\xi^{k+1})\|^\alpha]+ 6\|\nabla f(x^{k+1})-\nabla f(x^k)\|^\alpha \\
&\quad+ 6\theta_k^\alpha \mathbb{E}_{\xi^{k+1}}[\|\nabla f(x^k) - G(x^k;\xi^{k+1}))\|^\alpha\big]\\
&\le (1-\theta_k)^\alpha\|m^k-\nabla f(x^k)\|^\alpha + 6(L_1^\alpha+L^\alpha)\eta_k^\alpha + 6\sigma^\alpha\theta_k^\alpha,
\end{align*}
where the first inequality is due to \eqref{open-alpha}, $\E_{\xi^{k+1}}[G(x^{k+1};\xi^{k+1})-\nabla f(x^{k+1})]=0$, and $\E_{\xi^{k+1}}[G(x^k;\xi^{k+1})-\nabla f(x^k)]=0$, the second inequality follows from $\|a+b+c\|^\alpha\le3(\|a\|^\alpha+\|b\|^\alpha+\|c\|^\alpha)$ for all $a,b,c\in\mathbb{R}^n$ due to $\alpha\in(1,2]$ and the convexity of $\|\cdot\|^{\alpha}$, and the last inequality is due to $\mathbb{E}_{\xi^{k+1}}[\|\nabla f(x^k) - G(x^k;\xi^{k+1}))\|^\alpha]\le\sigma^\alpha$, $\|\nabla f(x^k)-\nabla f(x^{k+1})\|\leq L_1 \eta_k$, and $\mathbb{E}_{\xi^{k+1}}[\|G(x^{k+1};\xi^{k+1}) - G(x^k;\xi^{k+1})\|^\alpha] \leq L^\alpha \eta_k^\alpha$. The above relation together with $\theta_k\in(0,1]$ and $\alpha\in(1,2]$ implies that this lemma holds.
\end{proof}

The following lemma establishes a descent property for the potential sequence $\{{\mathcal P}_k\}$ defined below.

\begin{lemma}\label{thm:stat-bd-rm}
Suppose that Assumptions \ref{asp:basic} and \ref{asp:gen-ave-smth} hold. Let $\{(x^k,m^k)\}$ be the sequence generated by Algorithm \ref{alg:unf-sfom-rm} with input parameters $\{(\eta_k,\theta_k)\}$. Let $L_1$, $\alpha$, and $\sigma$ be given in Assumption \ref{asp:basic}, $L$ be given in Assumption \ref{asp:gen-ave-smth}, and $\{{\mathcal P}_k\}$ be defined in \eqref{def:pot-pm} for $\{(x^k,m^k)\}$ and any positive sequence $\{p_k\}$ that satisfies $(1-\theta_k)p_{k+1} \le (1-\theta_k/2)p_k$ for all $k\ge0$. Then it holds that for all $k\geq 0$,
\begin{align}
&\mathbb{E}_{\xi^{k+1}}[{\mathcal P}_{k+1}]  \le {\mathcal P}_k- \eta_k\|\nabla f(x^k)\| + \frac{L_1}{2}\eta_k^2+ \frac{(\alpha-1)(2\eta_k)^{\alpha/(\alpha-1)}}{\alpha^{\alpha/(\alpha-1)}(\theta_kp_k/2)^{1/(\alpha-1)}} + 6(L_1^\alpha+L^\alpha)\eta_k^\alpha  p_{k+1} + 6\sigma^\alpha\theta_k^\alpha p_{k+1}.\label{stat-bd-rm}
\end{align}
\end{lemma}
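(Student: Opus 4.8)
The plan is to mirror almost verbatim the proof of Lemma~\ref{lem:rate-pm}, with the only changes being the bookkeeping coming from the recursive-momentum error recurrence \eqref{ineq:vr-rm} in place of \eqref{ineq:vr-pm}, and the slightly different contraction bound $(1-\theta_k)p_{k+1}\le(1-\theta_k/2)p_k$ (factor $1/2$ rather than a direct monotonicity of $\{p_k\}$). First I would fix $k\ge0$ and invoke Lemma~\ref{lem:ppt-desc} with $(x^+,x,m,\eta)=(x^{k+1},x^k,m^k,\eta_k)$ to obtain
\begin{align*}
f(x^{k+1}) \le f(x^k) - \eta_k\|\nabla f(x^k)\| + 2\eta_k\|\nabla f(x^k) - m^k\| + \frac{L_1}{2}\eta_k^2.
\end{align*}

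Next I would combine this with the definition \eqref{def:pot-pm} of $\mathcal{P}_{k+1}$ and the error recurrence \eqref{ineq:vr-rm}, giving
\begin{align*}
\mathbb{E}_{\xi^{k+1}}[{\mathcal P}_{k+1}] &\le f(x^k) - \eta_k\|\nabla f(x^k)\| + 2\eta_k\|\nabla f(x^k) - m^k\| + \frac{L_1}{2}\eta_k^2\\
&\quad + (1-\theta_k)p_{k+1}\|m^k-\nabla f(x^k)\|^\alpha + 6(L_1^\alpha+L^\alpha)\eta_k^\alpha p_{k+1} + 6\sigma^\alpha\theta_k^\alpha p_{k+1}.
\end{align*}
Then I would apply the hypothesis $(1-\theta_k)p_{k+1}\le(1-\theta_k/2)p_k$ to replace the coefficient of $\|m^k-\nabla f(x^k)\|^\alpha$ by $(1-\theta_k/2)p_k$. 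The remaining step is a Young's inequality applied to the cross term $2\eta_k\|\nabla f(x^k)-m^k\|$, splitting it (with conjugate exponents $\alpha$ and $\alpha'=\alpha/(\alpha-1)$, and the same weight $(\alpha\theta_kp_k/2)^{1/\alpha}$ used in Lemma~\ref{lem:rate-pm}) as
\begin{align*}
2\eta_k\|\nabla f(x^k) - m^k\| \le \frac{\theta_k p_k}{2}\|\nabla f(x^k) - m^k\|^\alpha + \frac{(\alpha-1)(2\eta_k)^{\alpha/(\alpha-1)}}{\alpha^{\alpha/(\alpha-1)}(\theta_kp_k/2)^{1/(\alpha-1)}}.
\end{align*}
Adding the $\tfrac{\theta_k p_k}{2}\|\nabla f(x^k)-m^k\|^\alpha$ term to the $(1-\theta_k/2)p_k\|\nabla f(x^k)-m^k\|^\alpha$ term yields exactly $p_k\|\nabla f(x^k)-m^k\|^\alpha$, which together with $f(x^k)$ reassembles $\mathcal{P}_k$ via \eqref{def:pot-pm}; collecting the leftover terms gives \eqref{stat-bd-rm}.

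There is essentially no serious obstacle here: the entire argument is a structural copy of the proof of Lemma~\ref{lem:rate-pm}, and the only things to be careful about are (i) using the factor $1/2$ consistently (both in the contraction hypothesis and in the Young's inequality weight) so that the two $\|\nabla f(x^k)-m^k\|^\alpha$ contributions sum to coefficient exactly $p_k$, and (ii) carrying the error-term constants $6(L_1^\alpha+L^\alpha)\eta_k^\alpha$ and $6\sigma^\alpha\theta_k^\alpha$ (multiplied by $p_{k+1}$) through unchanged, since unlike the Polyak-momentum case there is no $\theta_k^{1-\alpha}$ step-size factor in the noise term and the smoothness term is $\eta_k^\alpha$ rather than $\theta_k^{1-\alpha}\eta_k^\alpha$. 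The positivity of $\{p_k\}$ is what guarantees all the divisions by $(\theta_kp_k/2)^{1/(\alpha-1)}$ are legitimate, and no monotonicity of $\{p_k\}$ is needed beyond the stated contraction inequality.
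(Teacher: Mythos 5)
Your proposal is correct and follows essentially the same route as the paper's own proof: Lemma \ref{lem:ppt-desc} for the descent step, the recurrence \eqref{ineq:vr-rm}, the contraction hypothesis $(1-\theta_k)p_{k+1}\le(1-\theta_k/2)p_k$, and Young's inequality with weight $(\alpha\theta_k p_k/2)^{1/\alpha}$ so that the two $\|\nabla f(x^k)-m^k\|^\alpha$ coefficients sum to exactly $p_k$. The bookkeeping details you flag (keeping $p_{k+1}$ on the two error terms, no monotonicity of $\{p_k\}$ beyond the contraction inequality) are exactly as in the paper.
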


\begin{proof}
Fix any $k\ge0$. By Lemma \ref{lem:ppt-desc} with $(x^+,x,m,\eta)=(x^{k+1},x^k,m^k,\eta_k)$, one has 
\begin{align}\label{upbd-fxk+1-rm}
f(x^{k+1}) \le f(x^k) - \eta_k\|\nabla f(x^k)\| + 2\eta_k\|\nabla f(x^k) - m^k\| + \frac{L_1}{2}\eta_k^2.    
\end{align}
Combining this with \eqref{def:pot-pm} and \eqref{ineq:vr-rm}, we obtain that 
\begin{align}
&\mathbb{E}_{\xi^{k+1}}[{\mathcal P}_{k+1}] \overset{\eqref{def:pot-pm}}{=} \mathbb{E}_{\xi^{k+1}}[f(x^{k+1}) + p_{k+1}\|m^{k+1} - \nabla f(x^{k+1})\|^\alpha] \nonumber\\
&\overset{\eqref{ineq:vr-rm}\eqref{upbd-fxk+1-rm}}{\le} f(x^k) - \eta_k\|\nabla f(x^k)\| + 2\eta_k\|\nabla f(x^k) - m^k\| + \frac{L_1}{2}\eta_k^2  \nonumber\\
&\qquad \ + (1-\theta_k)p_{k+1}\|m^k-\nabla f(x^k)\|^\alpha + 6(L_1^\alpha+L^\alpha)\eta_k^\alpha  p_{k+1} + 6\sigma^\alpha\theta_k^\alpha p_{k+1}\nonumber\\
&\le f(x^k) - \eta_k\|\nabla f(x^k)\| + 2\eta_k\|\nabla f(x^k) - m^k\| + \frac{L_1}{2}\eta_k^2 \nonumber\\
&\quad + (1-\theta_k/2)p_k\|m^k-\nabla f(x^k)\|^\alpha + 6(L_1^\alpha+L^\alpha)\eta_k^\alpha  p_{k+1} + 6\sigma^\alpha\theta_k^\alpha p_{k+1},\label{pf-pot-desc-rm}
\end{align}
where the last inequality follows from $(1-\theta_k)p_{k+1}\le(1-\theta_k/2)p_k$. In addition, letting $\alpha^\prime=\alpha/(\alpha-1)$, and using the Young's inequality, one has that
\begin{align*}
2\eta_k\|\nabla f(x^k) - m^k\| & \le \frac{((\alpha\theta_k p_k/2)^{1/\alpha} \|\nabla f(x^k) - m^k\|)^\alpha}{\alpha} + \frac{(2\eta_k/(\alpha\theta_k p_k/2)^{1/\alpha})^{\alpha^\prime}}{\alpha^\prime}\\
&= \frac{\theta_k p_k}{2} \|\nabla f(x^k) - m^k\|^\alpha + \frac{(\alpha-1)(2\eta_k)^{\alpha/(\alpha-1)}}{\alpha^{\alpha/(\alpha-1)}(\theta_kp_k/2)^{1/(\alpha-1)}}.
\end{align*}
This together with \eqref{pf-pot-desc-rm} implies that
\begin{align*}
&\mathbb{E}_{\xi^{k+1}}[{\mathcal P}_{k+1}]  \le f(x^k) + p_k\|\nabla f(x^k) - m^k\|^\alpha -\eta_k\|\nabla f(x^k)\| + \frac{L_1}{2}\eta_k^2+\frac{(\alpha-1)(2\eta_k)^{\alpha/(\alpha-1)}}{\alpha^{\alpha/(\alpha-1)}(\theta_kp_k/2)^{1/(\alpha-1)}} \\
&\qquad\qquad\qquad+ 6(L_1^\alpha+L^\alpha)\eta_k^\alpha  p_{k+1} + 6\sigma^\alpha\theta_k^\alpha p_{k+1} \\
&\le {\mathcal P}_k- \eta_k\|\nabla f(x^k)\| + \frac{L_1}{2}\eta_k^2  + \frac{(\alpha-1)(2\eta_k)^{\alpha/(\alpha-1)}}{\alpha^{\alpha/(\alpha-1)}(\theta_kp_k/2)^{1/(\alpha-1)}} + 6(L_1^\alpha+L^\alpha)\eta_k^\alpha  p_{k+1} + 6\sigma^\alpha\theta_k^\alpha p_{k+1}.   
\end{align*}    
By this and \eqref{def:pot-pm}, one can see that \eqref{stat-bd-rm} holds.
\end{proof}

The following lemma establishes some property for a specific choice of $\{(\theta_k, p_k)\}$, which will be used to prove Theorem \ref{thm:known-rate-rm} subsequently.

\begin{lemma}\label{lem:pk-know-rm}
Let $\{\theta_k\}$ be given in \eqref{rm-eta-theta}, and $\{p_k\}$ be defined as 
\begin{align}\label{def:pot-known-rm}
p_k= (k+1)^{(\alpha-1)^2/(2\alpha-1)}\qquad\forall k\ge0.    
\end{align}
Then $(1-\theta_k)p_{k+1}\le (1-\theta_k/2)p_k$ holds for all $k\ge0$.
\end{lemma}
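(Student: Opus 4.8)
The plan is to divide the claimed inequality $(1-\theta_k)p_{k+1}\le(1-\theta_k/2)p_k$ through by $p_k>0$ and reduce it to a pointwise bound on the ratio of consecutive terms of $\{p_k\}$. Since $\theta_k=(k+1)^{-\alpha/(2\alpha-1)}\in(0,1]$, one has $1-\theta_k\ge0$, so it suffices to prove the cleaner inequality
\[
\frac{p_{k+1}}{p_k}\ \le\ 1+\frac{\theta_k}{2}\qquad\forall k\ge0;
\]
indeed, multiplying this through by $1-\theta_k\ge0$ gives $(1-\theta_k)p_{k+1}\le(1-\theta_k)(1+\theta_k/2)p_k=(1-\theta_k/2-\theta_k^2/2)p_k\le(1-\theta_k/2)p_k$. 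Note this reasoning is uniform in $k$ and in particular disposes of the edge case $k=0$ (where $\theta_0=1$ and both sides vanish on the left) automatically.

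To establish the ratio bound, first I would write $p_{k+1}/p_k=(1+1/(k+1))^{(\alpha-1)^2/(2\alpha-1)}$ and apply the elementary inequality $(1+\tau)^\beta\le1+\beta\tau$ (valid for $\tau>-1$ and $\beta\in[0,1]$), which is legitimate here because $(\alpha-1)^2/(2\alpha-1)\in[0,1]$ for $\alpha\in(1,2]$. This yields $p_{k+1}/p_k\le 1+\frac{(\alpha-1)^2}{(2\alpha-1)(k+1)}$. It then remains to check $\frac{(\alpha-1)^2}{(2\alpha-1)(k+1)}\le\frac{1}{2(k+1)^{\alpha/(2\alpha-1)}}$, which, after multiplying both sides by $2(k+1)$, is equivalent to $\frac{2(\alpha-1)^2}{2\alpha-1}\le(k+1)^{(\alpha-1)/(2\alpha-1)}$. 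Because the exponent $(\alpha-1)/(2\alpha-1)$ is positive, the right-hand side is at least $1$ for every $k\ge0$, so it is enough to verify the constant inequality $2(\alpha-1)^2\le2\alpha-1$ on $\alpha\in(1,2]$.

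The only genuine computation — and the step I would flag as the crux, although it is routine — is this last quadratic inequality. Writing $2\alpha-1-2(\alpha-1)^2=-2\alpha^2+6\alpha-3$, whose roots are $(3\pm\sqrt{3})/2\approx0.63$ and $2.37$, one sees that $-2\alpha^2+6\alpha-3\ge0$ on the whole interval $[(3-\sqrt{3})/2,(3+\sqrt{3})/2]\supset(1,2]$, which is exactly what is needed. Assembling the three pieces — the reduction via $1-\theta_k\ge0$, the Bernoulli bound on $p_{k+1}/p_k$, and the quadratic check — gives the ratio bound and hence the lemma. I do not anticipate any technical obstacle: the argument mirrors the treatment of $p_{k+1}/p_k$ in Lemma~\ref{lem:pk-mem}, the only difference being the precise exponents that must be compared.
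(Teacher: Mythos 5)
Your proof is correct and follows essentially the same route as the paper's: bound $p_{k+1}/p_k$ via Bernoulli's inequality $(1+\tau)^\beta\le 1+\beta\tau$ and compare the result against $\theta_k/2$ using an elementary inequality in $\alpha$ (you check $2(\alpha-1)^2\le 2\alpha-1$ directly, while the paper routes through $(\alpha-1)^2/(2\alpha-1)\le 1/3$ and $\alpha/(2\alpha-1)<1$). A minor point in your favor: by multiplying the ratio bound by $1-\theta_k\ge 0$ rather than dividing by $1-\theta_k$, you avoid the degenerate case $\theta_0=1$ that the paper's manipulation of $(1-\theta_k/2)/(1-\theta_k)$ glosses over.
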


\begin{proof}
Fix any $k\ge0$. It follows from \eqref{rm-eta-theta} that
\begin{align}\label{frac-pktk-rm}
\frac{1-\theta_k/2}{1-\theta_k} =1 + \frac{\theta_k}{2(1-\theta_k)} \ge 1 + \frac{\theta_k}{2} \overset{\eqref{rm-eta-theta}}{=} 1 + \frac{1}{2(k+1)^{\alpha/(2\alpha-1)}} \ge 1 + \frac{1}{2(k+1)},
\end{align}
where the last inequality is due to $\alpha/(2\alpha-1)<1$ for all $\alpha\in(1,2]$. In addition, notice from \eqref{def:pot-known-rm} that
\begin{align*}
\frac{p_{k+1}}{p_k} = \Big(1+\frac{1}{k+1} \Big)^{(\alpha-1)^2/(2\alpha-1)} \le \Big(1+\frac{1}{k+1} \Big)^{1/3} \le 1+\frac{1}{3(k+1)},   
\end{align*}
where the first inequality is due to $(\alpha-1)^2/(2\alpha-1)\le1/3$ for all $\alpha\in(1,2]$, and the last inequality is due to $(1+\tau)^\beta\le 1+\tau \beta$ for all $\tau>-1$ and $\beta\in[0,1]$. The above relation along with \eqref{frac-pktk-rm} implies that $(1-\theta_k)p_{k+1}\le (1-\theta_k/2)p_k$ holds.
\end{proof}

We are now ready to prove Theorem \ref{thm:known-rate-rm}.

\begin{proof}[\textbf{Proof of Theorem \ref{thm:known-rate-rm}}]
Let $\{(x^k,m^k)\}$ be generated by Algorithm \ref{alg:unf-sfom-rm} with $\{(\eta_k,\theta_k)\}$ defined in \eqref{rm-eta-theta}, and let 
$\{{\mathcal P}_k\}$ be defined in \eqref{def:pot-pm} with such $\{(x^k,m^k)\}$ and $\{p_k\}$ given in \eqref{def:pot-known-rm}. By Lemma \ref{lem:pk-know-rm}, one can see that such $\{(\eta_k,\theta_k,p_k)\}$ satisfies the assumptions in Lemma \ref{thm:stat-bd-rm} and Algorithm \ref{alg:unf-sfom-rm}. In addition, by \eqref{def:pot-pm} and \eqref{def:pot-known-rm}, one has that
\begin{align}
&\mathbb{E}[{\mathcal P}_0]= f(x^0)+p_0\mathbb{E}[\|m^0-\nabla f(x^0)\|^\alpha]= f(x^0) + \mathbb{E}[\|G(x^0;\xi^0)-\nabla f(x^0)\|^\alpha]\le f(x^0)+\sigma^\alpha,\label{upbd-exp-P0-rm}\\ 
&\mathbb{E}[{\mathcal P}_K]= \mathbb{E}[f(x^K)+p_K\|m^K-\nabla f(x^K)\|^\alpha]\ge \mathbb{E}[f(x^K)] \ge f_{\mathrm{low}}.\label{lwbd-exp-Pk-rm}
\end{align}
Taking expectation on both sides of \eqref{stat-bd-rm} with respect to $\{\xi^i\}_{i=0}^{k+1}$, we have {that for all $k\ge0$,}
\[
\mathbb{E}[{\mathcal P}_{k+1}]  \le \mathbb{E}[{\mathcal P}_k]- \eta_k\mathbb{E}[\|\nabla f(x^k)\|] + \frac{L_1}{2}\eta_k^2+ \frac{(\alpha-1)(2\eta_k)^{\alpha/(\alpha-1)}}{\alpha^{\alpha/(\alpha-1)}(\theta_kp_k/2)^{1/(\alpha-1)}} + 6(L_1^\alpha+L^\alpha)\eta_k^\alpha  p_{k+1} + 6\sigma^\alpha\theta_k^\alpha p_{k+1}.
\]
Summing up this inequality over $k=0,\ldots,K-1$, and using \eqref{upbd-exp-P0-rm} and \eqref{lwbd-exp-Pk-rm}, we {obtain that for all $K\ge1$,}
\begin{align}
f_{\mathrm{low}} & \overset{\eqref{lwbd-exp-Pk-rm}}{\le} \mathbb{E}[{\mathcal P}_K]\le \E[{\mathcal P}_0] - \sum_{k=0}^{K-1}\eta_k\mathbb{E}[\|\nabla f(x^k)\|] + \sum_{k=0}^{K-1}\Big(\frac{L_1}{2}\eta_k^2 + \frac{(\alpha-1)(2\eta_k)^{\alpha/(\alpha-1)}}{\alpha^{\alpha/(\alpha-1)}(\theta_kp_k/2)^{1/(\alpha-1)}} \nonumber \\ 
&\qquad+ 6(L_1^\alpha+L^\alpha)\eta_k^\alpha  p_{k+1} + 6\sigma^\alpha\theta_k^\alpha p_{k+1}\Big)\nonumber\\
&\overset{\eqref{upbd-exp-P0-rm}}{\le} f(x^0) + \sigma^\alpha - \eta_{K-1}\sum_{k=0}^{K-1}\mathbb{E}[\|\nabla f(x^k)\|]  + \sum_{k=0}^{K-1}\Big(\frac{L_1}{2}\eta_k^2 + \frac{(\alpha-1)(2\eta_k)^{\alpha/(\alpha-1)}}{\alpha^{\alpha/(\alpha-1)}(\theta_kp_k/2)^{1/(\alpha-1)}} \nonumber \\
&\qquad + 12(L_1^\alpha+L^\alpha)\eta_k^\alpha p_k + 12\sigma^\alpha\theta_k^\alpha p_k\Big),\label{rear-pot-desc-rm-known}
\end{align}
where the last inequality follows from \eqref{upbd-exp-P0-rm} and the fact that $\{\eta_k\}$ is nonincreasing and $p_{k+1}\le 2p_k$ for all $k\ge0$. Rearranging the terms in \eqref{rear-pot-desc-rm-known}, and using \eqref{Ma-rm}, \eqref{rm-eta-theta}, and \eqref{def:pot-known-rm}, we obtain that for all $K\ge3$,
\begin{align*}
&\frac{1}{K}\sum_{k=0}^{K-1}\mathbb{E}[\|\nabla f(x^k)\|] \overset{\eqref{rear-pot-desc-rm-known}}{\le} \frac{f(x^0) - f_{\mathrm{low}}+\sigma^\alpha}{K\eta_{K-1}}\\
&\qquad + \frac{1}{K\eta_{K-1}}\sum_{k=0}^{K-1}\Big(\frac{L_1}{2}\eta_k^2 + \frac{(\alpha-1)(2\eta_k)^{\alpha/(\alpha-1)}}{\alpha^{\alpha/(\alpha-1)}(\theta_kp_k/2)^{1/(\alpha-1)}} + 12(L_1^\alpha+L^\alpha)\eta_k^\alpha p_k + 12\sigma^\alpha\theta_k^\alpha p_k\Big)\\
&\overset{\eqref{rm-eta-theta}\eqref{def:pot-known-rm}}{=} \frac{f(x^0) - f_{\mathrm{low}}+\sigma^\alpha}{K^{(\alpha-1)/(2\alpha-1)}} \\
&\qquad + \frac{1}{K^{(\alpha-1)/(2\alpha-1)}}\sum_{k=0}^{K-1}\Big(\frac{L_1}{2(k+1)^{2\alpha/(2\alpha-1)}} + \frac{2^{1/(\alpha-1)}(\alpha-1)(2/\alpha)^{\alpha/(\alpha-1)} + 12(L_1^\alpha+L^\alpha) + 12\sigma^\alpha}{k+1}\Big)\\
& \le \frac{f(x^0) - f_{\mathrm{low}}+\sigma^\alpha}{K^{(\alpha-1)/(2\alpha-1)}} + \frac{L_1/2 + 2^{1/(\alpha-1)}(\alpha-1)(2/\alpha)^{\alpha/(\alpha-1)} + 12(L_1^\alpha+L^\alpha) + 12\sigma^\alpha}{K^{(\alpha-1)/(2\alpha-1)}}\sum_{k=0}^{K-1}\frac{1}{k+1}\\
&\le \frac{2(f(x^0) - f_{\mathrm{low}}+\sigma^\alpha+ L_1/2 + 2^{1/(\alpha-1)}(\alpha-1)(2/\alpha)^{\alpha/(\alpha-1)} + 12(L_1^\alpha+L^\alpha) + 12\sigma^\alpha)\ln K}{K^{(\alpha-1)/(2\alpha-1)}}\\
&\overset{\eqref{Ma-rm}}{=} \frac{M_\alpha\ln K}{K^{(\alpha-1)/(2\alpha-1)}},
\end{align*}
where the second inequality follows from $2\alpha/(2\alpha-1)>1$, the third inequality follows from $\sum_{k=0}^{K-1}1/(k+1)\le \ln(2K+1)\le 2\ln K$ due to \eqref{upbd:series-ka} and $K\ge 3$. Recall that $\iota_K$ is uniformly selected from $\{0,\ldots,K-1\}$. It follows from this and the above relation that
\begin{align}\label{pre-upbd-rm}
\E[\|\nabla f(x^{\iota_K})\|] = \frac{1}{K}\sum_{k=0}^{K-1}\mathbb{E}[\|\nabla f(x^k)\|] \le \frac{M_{\alpha}\ln K}{K^{(\alpha-1)/(2\alpha-1)}}\qquad\forall K\ge 3.
\end{align}
By Lemma \ref{lem:rate-complexity} with $(\beta,u,v)=((\alpha-1)/(2\alpha-1),(\alpha-1)\epsilon/(2(2\alpha-1)M_{\alpha}),K)$, one can see that 
\begin{align*}
K^{-(\alpha-1)/(2\alpha-1)}\ln K\le \frac{\epsilon}{M_{\alpha}}\qquad\forall K\ge\Big(\frac{2(2\alpha-1)M_{\alpha}}{(\alpha-1)\epsilon}\ln\Big(\frac{2(2\alpha-1)M_{\alpha}}{(\alpha-1)\epsilon}\Big)\Big)^{(2\alpha-1)/(\alpha-1)},    
\end{align*}
which together with \eqref{pre-upbd-rm} implies that Theorem \ref{thm:known-rate-rm} holds.
\end{proof}

The next lemma establishes some property for a specific choice of $\{(\theta_k, p_k)\}$, which will be used to prove Theorem \ref{thm:unknown-rate-rm} subsequently.

\begin{lemma}\label{lem:pk-unknow-rm}
Let $\{\theta_k\}$ be given in \eqref{rm-eta-theta-un}, and $\{p_k\}$ be defined as  
\begin{align}\label{def:pk-unknown-rm}
p_k= (k+1)^{2(\alpha-1)^2/(3\alpha)}\qquad\forall k\ge0.   
\end{align}
Then $(1-\theta_k)p_{k+1}\le (1-\theta_k/2)p_k$ holds for all $k\ge0$.
\end{lemma}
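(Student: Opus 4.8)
The plan is to follow the same template as the proof of Lemma~\ref{lem:pk-know-rm}, since the only differences here are the particular exponents appearing in $\theta_k$ and $p_k$ given by \eqref{rm-eta-theta-un} and \eqref{def:pk-unknown-rm}. First I would fix an arbitrary $k\ge0$ and lower-bound the ratio $(1-\theta_k/2)/(1-\theta_k)$. Writing $(1-\theta_k/2)/(1-\theta_k)=1+\theta_k/(2(1-\theta_k))$ and using $\theta_k\in(0,1]$ gives $(1-\theta_k/2)/(1-\theta_k)\ge 1+\theta_k/2$. Substituting $\theta_k=1/(k+1)^{2/3}$ and using $2/3<1$ (so that $(k+1)^{2/3}\le k+1$) yields $(1-\theta_k/2)/(1-\theta_k)\ge 1+1/(2(k+1))$.

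Next I would upper-bound the ratio $p_{k+1}/p_k$. From \eqref{def:pk-unknown-rm} one has $p_{k+1}/p_k=(1+1/(k+1))^{2(\alpha-1)^2/(3\alpha)}$. The key elementary fact needed is that $2(\alpha-1)^2/(3\alpha)\le 1/3$ for all $\alpha\in(1,2]$; this is equivalent to $2(\alpha-1)^2\le\alpha$, i.e.\ $(2\alpha-1)(\alpha-2)\le0$, which holds since $2\alpha-1>0$ and $\alpha-2\le0$ on $(1,2]$. Combining this with the concavity bound $(1+\tau)^\beta\le 1+\beta\tau$ for all $\tau>-1$ and $\beta\in[0,1]$ (already invoked in the proof of Lemma~\ref{lem:pk-know-rm}) gives $p_{k+1}/p_k\le(1+1/(k+1))^{1/3}\le 1+1/(3(k+1))$.

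Finally I would chain the two estimates: since $1+1/(3(k+1))\le 1+1/(2(k+1))\le(1-\theta_k/2)/(1-\theta_k)$, we obtain $p_{k+1}/p_k\le(1-\theta_k/2)/(1-\theta_k)$, and multiplying through by $(1-\theta_k)p_k>0$ yields $(1-\theta_k)p_{k+1}\le(1-\theta_k/2)p_k$, as desired. The only mildly non-routine step is verifying the exponent inequality $2(\alpha-1)^2/(3\alpha)\le 1/3$ on $(1,2]$, but as noted this reduces to checking the sign of $(2\alpha-1)(\alpha-2)$; everything else is a direct transcription of the argument already used for Lemma~\ref{lem:pk-know-rm}, so I anticipate no genuine obstacle.
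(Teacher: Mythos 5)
Your proposal is correct and follows essentially the same route as the paper's proof: lower-bound $(1-\theta_k/2)/(1-\theta_k)$ by $1+1/(2(k+1))$, upper-bound $p_{k+1}/p_k$ by $1+1/(3(k+1))$ via the exponent bound $2(\alpha-1)^2/(3\alpha)\le 1/3$ and $(1+\tau)^\beta\le 1+\beta\tau$, and chain the two. Your explicit factorization $(2\alpha-1)(\alpha-2)\le 0$ verifying the exponent inequality is a detail the paper merely asserts, but otherwise the arguments coincide.
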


\begin{proof}
Fix any $k\ge0$. Observe that
\begin{align}\label{frac-pktk-rm-u}
\frac{1-\theta_k/2}{1-\theta_k} = 1 + \frac{\theta_k}{2(1-\theta_k)} \ge 1 + \frac{\theta_k}{2} \overset{\eqref{rm-eta-theta-un}}{=} 1 + \frac{1}{2(k+1)^{2/3}} \ge 1 + \frac{1}{2(k+1)},
\end{align}
In addition, notice from \eqref{def:pk-unknown-rm} that
\begin{align*}
\frac{p_{k+1}}{p_k} = \Big(1+\frac{1}{k+1} \Big)^{2(\alpha-1)^2/(3\alpha)} \le \Big(1+\frac{1}{k+1} \Big)^{1/3} \le 1+\frac{1}{3(k+1)},   
\end{align*}
where the first inequality is due to $2(\alpha-1)^2/(3\alpha)\le1/3$ for all $\alpha\in(1,2]$, and the last inequality is due to $(1+\tau)^\beta\le 1+\tau \beta$ for all $\tau>-1$ and $\beta\in[0,1]$. The above relation together with \eqref{frac-pktk-rm-u} implies that $(1-\theta_k)p_{k+1}\le (1-\theta_k/2)p_k$ holds.
\end{proof}

We are now ready to prove Theorem \ref{thm:unknown-rate-rm}.
\begin{proof}[\textbf{Proof of Theorem \ref{thm:unknown-rate-rm}}]
Let $\{(x^k,m^k)\}$ be generated by Algorithm \ref{alg:unf-sfom-rm} with $\{(\eta_k,\theta_k)\}$ defined in \eqref{rm-eta-theta-un}, and let 
$\{{\mathcal P}_k\}$ be defined in \eqref{def:pot-pm} with such $\{(x^k,m^k)\}$ and $\{p_k\}$ given in \eqref{def:pk-unknown-rm}. By Lemma \ref{lem:pk-unknow-rm}, one can see that such $\{(\eta_k,\theta_k,p_k)\}$ satisfies the assumptions in Lemma \ref{thm:stat-bd-rm} and Algorithm \ref{alg:unf-sfom-rm}. Using this and similar arguments as those for deriving \eqref{rear-pot-desc-rm-known}, we have {that for all $K\ge1$,}
\begin{align}
&f_{\mathrm{low}}\le f(x^0) + \sigma^\alpha - \eta_{K-1}\sum_{k=0}^{K-1}\mathbb{E}[\|\nabla f(x^k)\|] \nonumber\\
&\qquad\ \ + \sum_{k=0}^{K-1}\Big(\frac{L_1}{2}\eta_k^2 + \frac{(\alpha-1)(2\eta_k)^{\alpha/(\alpha-1)}}{\alpha^{\alpha/(\alpha-1)}(\theta_kp_k/2)^{1/(\alpha-1)}} + 12(L_1^\alpha+L^\alpha)\eta_k^\alpha p_k + 12\sigma^\alpha\theta_k^\alpha p_k\Big). \label{bd-unk-rm}
\end{align}
Rearranging the terms in \eqref{bd-unk-rm},  and using \eqref{Ma-tilde-rm}, \eqref{Ma-hat-rm}, \eqref{rm-eta-theta-un}, and \eqref{def:pk-unknown-rm}, we obtain that for all $K\ge3$,
\begin{align*}
&\frac{1}{K}\sum_{k=0}^{K-1}\mathbb{E}[\|\nabla f(x^k)\|] \overset{\eqref{bd-unk-rm}}{\le} \frac{f(x^0) - f_{\mathrm{low}} + \sigma^\alpha}{K\eta_{K-1}}\\
&\qquad\qquad + \frac{1}{K\eta_{K-1}} \sum_{k=0}^{K-1}\Big(\frac{L_1}{2}\eta_k^2 + \frac{(\alpha-1)(2\eta_k)^{\alpha/(\alpha-1)}}{\alpha^{\alpha/(\alpha-1)}(\theta_kp_k/2)^{1/(\alpha-1)}} + 12(L_1^\alpha+L^\alpha)\eta_k^\alpha p_k + 12\sigma^\alpha\theta_k^\alpha p_k\Big)\\
&\overset{\eqref{rm-eta-theta-un}\eqref{def:pk-unknown-rm}}{=}\frac{f(x^0) - f_{\mathrm{low}} + \sigma^\alpha}{K^{1/3}} \\
&\quad + \frac{1}{K^{1/3}}\sum_{k=0}^{K-1}\Big(\frac{L_1}{2(k+1)^{4/3}} + \frac{2^{1/(\alpha-1)}(\alpha-1)(2/\alpha)^{\alpha/(\alpha-1)}+ 12(L_1^\alpha+L^\alpha) + 12\sigma^\alpha}{(k+1)^{2(2\alpha-1)/(3\alpha)}}\Big)\\
&\le \frac{f(x^0) - f_{\mathrm{low}} + \sigma^\alpha}{K^{1/3}}\\
&\quad + \frac{1}{K^{1/3}}\sum_{k=0}^{K-1}\Big(\frac{L_1}{2(k+1)} + \frac{(2^{1/(\alpha-1)}(\alpha-1)(2/\alpha)^{\alpha/(\alpha-1)}+ 12(L_1^\alpha+L^\alpha) + 12\sigma^\alpha)K^{(2-\alpha)/(3\alpha)}}{k+1}\Big)\\
&\le \frac{2(f(x^0) - f_{\mathrm{low}} + \sigma^\alpha + L_1/2)\ln K}{K^{1/3}} + \frac{2(2^{1/(\alpha-1)}(\alpha-1)(2/\alpha)^{\alpha/(\alpha-1)}+ 12(L_1^\alpha+L^\alpha) + 12\sigma^\alpha)\ln K}{K^{2(\alpha-1)/(3\alpha)}}\\
&\overset{\eqref{Ma-tilde-rm}\eqref{Ma-hat-rm}}{=}\frac{\widetilde{M}_\alpha \ln K}{K^{1/3}} + \frac{\widehat{M}_\alpha \ln K}{K^{2(\alpha-1)/(3\alpha)}},
\end{align*}
where the second inequality follows from $2(2\alpha-1)/(3\alpha)\le 1$ for all $\alpha\in(1,2]$, and the last inequality follows from $\sum_{k=0}^{K-1}1/(k+1)\le 2\ln K$ due to \eqref{upbd:series-ka} and $K\ge 3$. Recall that $\iota_K$ is uniformly selected from $\{0,\ldots,K-1\}$. It follows from this and the above relation that
\begin{align}\label{pre-upbd-rm-unk}
\E[\|\nabla f(x^{\iota_K})\|] = \frac{1}{K}\sum_{k=0}^{K-1}\mathbb{E}[\|\nabla f(x^k)\|] \le \frac{\widetilde{M}_{\alpha}\ln K}{K^{1/3}} + \frac{\widehat{M}_{\alpha}\ln K}{K^{2(\alpha-1)/(3\alpha)}}\qquad\forall K\ge 3.
\end{align}
By Lemma \ref{lem:rate-complexity} with $(\beta,u,v)=(1/3,\epsilon/(12\widetilde{M}_{\alpha}),K)$ and $(\beta,u,v)=(2(\alpha-1)/(3\alpha),(\alpha-1)\epsilon/(6\alpha\widehat{M}_{\alpha}),K)$, one can see that 
\begin{align*}
&K^{-1/3}\ln K\le \frac{\epsilon}{2\widetilde{M}_{\alpha}}\qquad\forall K\ge\Big(\frac{12\widetilde{M}_{\alpha}}{\epsilon}\ln\Big(\frac{12\widetilde{M}_{\alpha}}{\epsilon}\Big)\Big)^{3},\\
&K^{-2(\alpha-1)/(3\alpha)}\ln K\le \frac{\epsilon}{2\widehat{M}_{\alpha}}\qquad\forall K\ge\Big(\frac{6\alpha\widehat{M}_{\alpha}}{(\alpha-1)\epsilon}\ln\Big(\frac{6\alpha\widehat{M}_{\alpha}}{(\alpha-1)\epsilon}\Big)\Big)^{3\alpha/(2(\alpha-1))},
\end{align*}
which together with \eqref{pre-upbd-rm-unk} implies that Theorem \ref{thm:unknown-rate-rm} holds.
\end{proof}

\appendix

\bibliographystyle{abbrv}
\bibliography{ref}

\end{document}